\newtheorem{theorem}{Theorem}[section]
\newtheorem{proposition}[theorem]{Proposition}
\newtheorem{lemma}[theorem]{Lemma}
\newtheorem{corollary}[theorem]{Corollary}
\theoremstyle{definition}
\newtheorem{definition}[theorem]{Definition}
\newtheorem{remark}[theorem]{Remark}
\newtheorem{example}[theorem]{Example}
\newtheorem{question}[theorem]{Question}
\DeclareMathOperator{\GL}{GL}
\DeclareMathOperator{\B}{\mathcal{B}}
\DeclareMathOperator{\Aut}{Aut}
\DeclareMathOperator{\Sym}{Sym}
\DeclareMathOperator{\dG}{d}
\DeclareMathOperator{\mG}{m}
\DeclareMathOperator{\ZG}{Z}
\newcommand{\Z}{\mathbb{Z}}
\newcommand{\F}{\mathbb{F}}
\newcommand{\msc}[1]{\href{https://zbmath.org/classification/?q=#1}{#1}}
\newcommand{\gen}[1]{\langle #1\rangle}
\numberwithin{equation}{section}
\renewcommand\subsection{\@startsection{subsection}{2}%
	\z@{.5\linespacing\@plus.7\linespacing}{-.5em}%
	{\normalfont \bfseries}}
\begin{document}

	\title{Independence and strong independence complexes of finite groups }
	\author[A.~Lucchini]{Andrea Lucchini}
	\address[Andrea Lucchini]{Universit\`a di Padova, Dipartimento di Matematica \lq\lq Tullio Levi-Civita\rq\rq}
	\email{lucchini@math.unipd.it}
 \author[M.~Stanojkovski]{Mima Stanojkovski}
	\address[Mima Stanojkovski]{Universit\`a di Trento, Dipartimento di Matematica}
	\email{mima.stanojkovski@unitn.it}

 \makeatletter
\@namedef{subjclassname@2020}{
 \textup{2020} Mathematics Subject Classification}
\makeatother

\subjclass[2020]{\msc{20D15}, \msc{20D30}, \msc{20D60}, \msc{05E45}, \msc{20F05}, \msc{20F16}}
\keywords{Independence complex, subgroup lattice, modular $p$-groups.}
	
	%\date{\today}

	\begin{abstract}
Let $G$ be a finite group. In \cite{Cam24} two different concepts of independence (namely \emph{independence} and \emph{strong independence}) are introduced for the subsets of $G$, yielding to the definition of two simplicial complexes whose vertices are the elements of $G$. The \emph{strong independence complex} $\tilde\Sigma(G)$ turns out to be a subcomplex of the \emph{independence complex} $\Sigma(G)$. We discuss several invariant properties related to these complexes and ask a number of questions inspired by our results and the examples we construct. We study then the particular case of complexes on finite abelian groups, giving a characterization of the finite groups realizing them. 
In conclusion, answering a question of Peter Cameron, we classify all finite groups in which the two concepts of independence coincide. 
\end{abstract}

	\maketitle

    \section{Introduction}

\noindent    
There are a number of graphs whose vertex set is a group $G$ and whose edges reflect the structure of $G$ in some way. These include the commuting graph (first studied in 1955), the generating graph (from 1996), the power graph (from 2000), and the enhanced power graph (from 2007), all of which have
a considerable and growing literature; cf.\ \cref{sec:graphs}. In a recent paper Cameron made some preliminary observations towards an extension of current work on graphs defined on groups to simplicial complexes; cf.\ \cite{Cam24}. 

Recall that a simplicial complex $\Delta$ is a downward-closed collection of finite subsets
(called simplices or simplexes) of a set $X.$ We assume that every singleton of $X$ belongs to $\Delta$. For geometric reasons, a simplex of cardinality $k$ has
dimension $k-1$ and is referred to as a $(k-1)$-simplex.

Cameron has drawn particular attention to two complexes on groups defined in terms of independence, one of which had already been partially investigated by Pinckney \cite{Pin21} in her doctoral thesis. A subset $A$ of a group $G$ is called \emph{independent} if none of its elements can be expressed
as a word in the other elements and their inverses; or equivalently, there is no $a\in A$ such that $a\in\gen{A\setminus\{a\}}$.
The independence complex $\Sigma(G)$ of $G$ consists of all the independent subsets of $G.$  A subset $A$ of a group $G$ is called \emph{strongly independent} if no subgroup
of $G$ containing $A$ has fewer than $|A|$ generators. The strong independence
complex $\tilde \Sigma(G)$  of $G$ is the complex whose simplices are the strongly independent subsets   of $G$. With these definitions,  $\tilde\Sigma(G)$ turns out to be a subcomplex of $\Sigma(G)$.

Recall that the $k$-skeleton of a simplicial complex $\Delta$ consists of all the simplices of dimension at most $k$.
Thus, the $1$-skeleton of a simplicial complex is a graph. The 1-skeleton of the independence complex $\Sigma(G)$ is the complement of the power graph of $G$%(see \cref{defgraph} and \cite[Proposition 2.1]{Cam24}). % i.e. the graph whose vertices are the elements of $G$, where $x$ and $y$ are joined if  neither of them is a power of the other. 
%Instead, 
, while the 1-skeleton of the  strong independence
complex $\tilde\Sigma(G)$ is the complement of the enhanced power graph of $G$ (see \cref{defgraph} and \cref{prop:sigma-to-graph}).
%(see \cref{defgraph} and \cite[Proposition 2.3]{Cam24}).

% i.e. the graph whose vertices are the elements of $G$, where $x$ and $y$ are joined if $\langle x,y\rangle$ is not a cyclic subgroup of $G$.

A natural question is how much the independence complex $\Sigma(G)$ and the strong independence complex $\tilde \Sigma(G)$ can tell us about the defining  group $G$.  We address this question when $G$ is a finite group. As we have just observed, from the 1-skeleton of $G$ we can deduce the power graph of $G.$ In some cases, this graph already contains relevant information about the structure of the group. In fact, it allows us to recover, for every integer $n$  the number of elements of $G$ of order $n.$ With this information, we can decide whether or not the group is, for instance, nilpotent or simple. Furthermore, a simple group is uniquely determined by its power graph, and therefore by its independence complex. 

In general, there exist non-isomorphic groups $G_1$ and $G_2$ such that their independence complexes are isomorphic. For example, it is not difficult to prove that if there exists a bijection between the elements of $G_1$ and those of $G_2$ that induces an isomorphism between their subgroup lattices, then this bijection induces also an isomorphism between their independence complexes (see \cref{prop:lattices}). The existence of such a bijection is guaranteed whenever there exists an isomorphism between the subgroup lattices of $G_1$ and  $G_2$ that maps subgroups of $G_1$ to subgroups of $G_2$ of the same order. Some   situations in which this can occur are described in \cref{sec:ind-lattices}, see in particular \cref{esempiouno} and \cref{ex:rottlaender}. On the other hand, all the examples available to us of non-isomorphic finite groups whose independence complexes are isomorphic are of this type. This brought us to conjecture that, given two finite groups $G_1$ and $G_2,$ the complex $\Sigma(G_1)$ is isomorphic to $\Sigma(G_2)$ if and only if there exists an isomorphism between their subgroup lattices  that preserves orders. The first main result of this paper is presented below as \cref{t:main} and confirms this conjecture in the particular case where $G_1$ is an abelian group. In order to state it we recall some definitions. A finite group $G$ is called \emph{modular} if its subgroup lattice is modular, i.e.\ $(H_1\vee H_2)\cap H_3 = H_1 \vee (H_2\cap H_3)$ for all subgroups $H_1,H_2,H_3$ of $G$ with $H_1\leq H_3.$ A finite group $G$ is called \emph{hamiltonian}  if it is nonabelian and all of its subgroups are normal.  

\begin{theorem}\label{t:main}Let $G_2$ be a finite group. Then there exists a finite abelian group $G_1$ such that $\Sigma(G_1)\cong \Sigma(G_2)$ if and only if $G_2$ is nilpotent
	and its Sylow subgroups are modular and nonhamiltonian, and this is equivalent to saying that there exists an index-preserving isomorphism between the subgroup lattice of $G_1$ and the subgroup lattice of $G_2.$
	\end{theorem}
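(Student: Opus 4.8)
The plan is to establish the three conditions as a cycle of implications. Write (i) for the existence of a finite abelian group $G_1$ with $\Sigma(G_1)\cong\Sigma(G_2)$, write (ii) for the assertion that $G_2$ is nilpotent with modular and nonhamiltonian Sylow subgroups, and write (iii) for the existence of a finite abelian $G_1$ admitting an index-preserving isomorphism of subgroup lattices with $G_2$. I would prove $\mathrm{(iii)}\Rightarrow\mathrm{(i)}\Rightarrow\mathrm{(ii)}\Rightarrow\mathrm{(iii)}$. The first of these is immediate: an index-preserving lattice isomorphism sends the trivial subgroup to the trivial subgroup, hence preserves orders, so it is exactly the kind of order-preserving lattice isomorphism to which \cref{prop:lattices} applies, yielding $\Sigma(G_1)\cong\Sigma(G_2)$.

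For $\mathrm{(ii)}\Rightarrow\mathrm{(iii)}$ I would argue prime by prime. Since $G_2$ is nilpotent it is the direct product of its Sylow subgroups $Q_p$, and its subgroup lattice is the direct product of the lattices $L(Q_p)$, with indices multiplying across the factors. Each $Q_p$ is a modular, nonhamiltonian $p$-group; by Iwasawa's structure theorem for $p$-groups with modular subgroup lattice, together with the classical theory of lattice-isomorphic abelian groups, such a $Q_p$ is lattice-isomorphic, via an index-preserving isomorphism, to a suitable abelian $p$-group $P_p$. The hamiltonian exclusion is precisely what rules out $Q_8$-type factors, whose lattice has a unique atom and therefore cannot match the lattice of any noncyclic abelian group. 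Taking $G_1=\prod_p P_p$, which is abelian, and assembling the index-preserving isomorphisms $L(P_p)\cong L(Q_p)$ into a product isomorphism $L(G_1)\cong L(G_2)$ gives (iii).

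The substance of the theorem is $\mathrm{(i)}\Rightarrow\mathrm{(ii)}$. First I would use that $\Sigma(G_2)$ determines the power graph of $G_2$ (via \cref{prop:sigma-to-graph}), and hence the number of elements of each order. Since $G_1$ is abelian, hence nilpotent, and since nilpotency is visible in the order statistics---a finite group is nilpotent if and only if, for every prime $p$, the number of elements of $p$-power order equals the $p$-part of the group order---the group $G_2$ is nilpotent as well. Writing $G_1=\prod_p P_p$ and $G_2=\prod_p Q_p$ for the Sylow decompositions, the next step is a reduction to the prime-power case. For a subset $A$ of the $p$-elements of $G_2$ one has $\gen{A\setminus\{a\}}=\gen{A\setminus\{a\}}$ computed equally in $G_2$ and in $Q_p$, so the full subcomplex of $\Sigma(G_2)$ on the vertices of $p$-power order is exactly $\Sigma(Q_p)$. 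Because membership in the set of $p$-elements is characterized within the power graph, any isomorphism $\Sigma(G_1)\cong\Sigma(G_2)$ must carry $p$-elements to $p$-elements, and therefore restricts to an isomorphism $\Sigma(P_p)\cong\Sigma(Q_p)$ for every prime $p$.

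It then remains to treat the $p$-group case: if $Q$ is a finite $p$-group with $\Sigma(Q)\cong\Sigma(P)$ for some abelian $p$-group $P$, then $Q$ is modular and nonhamiltonian. This is the step I expect to be the main obstacle, since one must extract genuinely lattice-theoretic information---modularity of $L(Q)$ and the absence of a quaternion section---from the purely combinatorial data of the complex. My approach would be to analyze the maximal simplices of $\Sigma(Q)$ through the Frattini quotient and the Burnside basis theorem, showing that the independence pattern of an abelian $p$-group forces the generation behaviour and the covering relations characteristic of a modular lattice, while the unique-atom obstruction noted above for $Q_8$ shows that hamiltonian factors cannot occur. Once the $p$-group case is settled, each Sylow subgroup $Q_p$ of $G_2$ is modular and nonhamiltonian, which is precisely (ii); this closes the cycle of implications and proves the theorem.
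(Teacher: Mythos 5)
Your overall architecture (a cycle of implications, reduction of the hard direction to Sylow subgroups, and quoting Iwasawa/Schmidt for the equivalence between modular nonhamiltonian Sylows and lattice-isomorphism with an abelian group) matches the paper's, and the directions $\mathrm{(iii)}\Rightarrow\mathrm{(i)}$ and $\mathrm{(ii)}\Rightarrow\mathrm{(iii)}$ are fine. But there are two genuine gaps in $\mathrm{(i)}\Rightarrow\mathrm{(ii)}$. The first is your assertion that ``membership in the set of $p$-elements is characterized within the power graph,'' so that the bijection induced by the complex isomorphism carries $p$-elements to $p$-elements. This is not automatic: an abstract isomorphism of power graphs need not preserve element orders (the obstruction $\phi(n)=\phi(2n)$ is real, and is exactly why the paper's \cref{prop:iso-general} needs the plain/compound $N$-class analysis of Bubboloni--Pinzauti and the hypothesis that $G_1$ is abelian and noncyclic). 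It is true that \emph{some} order-preserving bijection exists because isomorphic power graphs yield isomorphic directed power graphs, but you need the \emph{specific} bijection coming from the complex isomorphism to respect $p$-parts, otherwise it does not restrict to an isomorphism $\Sigma(P_p)\to\Sigma(Q_p)$ of subcomplexes. This step requires a real argument, not a one-line appeal to the power graph.

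The second and more serious gap is the $p$-group case, which you correctly identify as the heart of the theorem but for which you offer only a declaration of intent (``analyze the maximal simplices through the Frattini quotient\dots forces the covering relations characteristic of a modular lattice''). There is no known way to read off modularity of $\mathcal{L}(Q)$ directly from covering relations visible in $\Sigma(Q)$; the paper explicitly says it has no direct argument of this kind. What makes the problem tractable is the Lubotzky--Mann characterization (\cref{thm:modular}): a finite $p$-group is modular if and only if all of its $2$-generated subgroups are metacyclic. The paper then proves metacyclicity of $2$-generated subgroups (\cref{prop:2meta}) by a chain of combinatorial counting arguments in the complex: $|\Omega_1(H)|\leq p^2$ for $2$-generated $H$ and $\Omega_1(G_2)$ elementary abelian (\cref{lem:abel-basic}), a Laffey-type inequality $\dG(H)\leq\dG(\Omega_1(H))$ (\cref{lem:laffey}), Blackburn's classification of minimal non-metacyclic $p$-groups, powerfulness of $G_2$ for odd $p$, and separate exclusions of $Q_8$ (\cref{prop:noQ8}), $\texttt{SmallGroup}(32,32)$ and $\texttt{SmallGroup}(81,10)$ by counting elements with a prescribed $p$-th power. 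None of this machinery, nor any workable substitute, appears in your plan, so the central implication remains unproved.
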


\noindent
Finite modular $p$-groups have been classified by Iwasawa, and therefore the previous theorem provides a complete description of finite groups that have the same independence complex as an abelian group. The proof of \cref{t:main} is easily reduced to the case where $G_2$ is a $p$-group, and in the case of $p$-groups, our statement is equivalent to saying that two $p$-groups $G_1$ and $G_2$ have the same independence complex if and only if they have the same order and the same subgroup lattice. We are not aware of any arguments that allow us to derive information about the lattice of subgroups of $G$ directly from the knowledge of the independence complex $\Sigma(G).$ Indeed, to prove \cref{t:main}, we take a less direct route, aimed at proving that if a $p$-group $G_2$ has the same independence complex as a finite abelian $p$-group $G_1,$ then all of its 2-generated subgroups are metacyclic. This ensures that the subgroup lattice of $G_2$  is modular. To then exclude that $G_2$ is hamiltonian, it is sufficient to prove that $\Sigma(G_1) \cong \Sigma(G_2)$ implies that no subgroup of $G_2$ can be isomorphic to the quaternion group of order $8$.

By \cite[Corollary 3.1]{ZBM20}, if $G_1$ and $G_2$ are two finite groups, then the power graphs of $G_1$ and $G_2$ are isomorphic if and only if their enhanced power graphs are isomorphic. In particular, the 1-skeleton of $\Sigma(G_1)$ is isomorphic to the 1-skeleton of $\Sigma(G_2)$ if and only if the 1-skeleton of $\tilde\Sigma(G_1)$ is isomorphic to the 1-skeleton of $\tilde\Sigma(G_2).$ 
This prompts the natural question of whether the independence complexes are isomorphic if and only if the strong ones are.
We answer this question in the negative by providing examples of finite $p$-groups $G_1$ and $G_2$ such that the strong independence complexes $\tilde\Sigma(G_1)$ and $\tilde\Sigma(G_2)$ are isomorphic, but the independence complexes $\Sigma(G_1)$ and $\Sigma(G_2)$ are not (see \cref{ex:strong}). However, we also demonstrate that, if $G_2$
is abelian, then $\tilde\Sigma(G_2)\cong \tilde\Sigma(G_1)$ implies $\Sigma(G_2)\cong \Sigma(G_1).$
We don't have a direct proof of this statement, but it follows as a consequence of the following result, combined with \cref{t:main}.

\begin{theorem}\label{th2}
Let $G_1$ and $G_2$ be finite groups. If $G_1$ is abelian and $\tilde \Sigma(G_1)\cong \tilde \Sigma(G_2)$, then the subgroup lattices of $G_1$ and $G_2$ are isomorphic. 
\end{theorem}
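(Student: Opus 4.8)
The plan is to reconstruct the subgroup lattice of $G_2$ one prime at a time, using the abelian group $G_1$ as a rigid model and reducing everything to the case of $p$-groups, where \cref{t:main} can be brought to bear. Fix a simplicial isomorphism $F\colon \tilde\Sigma(G_1)\to\tilde\Sigma(G_2)$ and let $\phi\colon G_1\to G_2$ be the underlying bijection of vertex sets. Restricting $F$ to $1$-skeleta, $\phi$ is an isomorphism between the complements of the enhanced power graphs of $G_1$ and $G_2$ (\cref{prop:sigma-to-graph}); by \cite[Corollary 3.1]{ZBM20} the power graphs of $G_1$ and $G_2$ are then isomorphic, so the two groups have the same number of elements of each order. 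As recalled in the introduction, this statistic decides nilpotency, and since $G_1$ is abelian we conclude that $G_2$ is nilpotent.

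Next I would make the reduction to $p$-groups. The maximal cliques of the enhanced power graph are the maximal cyclic subgroups, so $\phi$ sends maximal cyclic subgroups of $G_1$ to maximal cyclic subgroups of $G_2$ of the same cardinality; reading the orders of elements off the chains of cyclic subgroups inside these cliques shows that $\phi$ preserves element orders, and in particular carries the Sylow $p$-subgroup $(G_1)_p$ (the set of $p$-elements, since $G_1$ is nilpotent) onto $(G_2)_p$. For a nilpotent group a subset $A$ is strongly independent exactly when, for some prime $p$, the $p$-components of its elements are pairwise distinct and strongly independent in the Sylow $p$-subgroup; hence the subcomplex of $\tilde\Sigma(G)$ induced on the $p$-elements is precisely $\tilde\Sigma(G_p)$. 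Therefore $F$ restricts to isomorphisms $\tilde\Sigma((G_1)_p)\cong\tilde\Sigma((G_2)_p)$ for every $p$, and because the subgroup lattice of a nilpotent group is the direct product of the lattices of its Sylow subgroups, it suffices to prove the theorem when $G_1$ and $G_2$ are $p$-groups and $G_1$ is abelian.

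So assume $G_1$ is an abelian $p$-group. Here the aim is to show that $G_2$ is a modular, nonhamiltonian $p$-group; once this is established the matching of types is automatic. Indeed, by \cref{t:main} such a $G_2$ admits an index-preserving lattice isomorphism onto some abelian $p$-group $A$, and an index-preserving lattice isomorphism preserves the poset of cyclic subgroups together with their orders (cyclic subgroups of order $p^k$ are those whose lattice is a chain of length $k$), hence preserves the number of elements of each order. Since $G_2$ and $G_1$ already share these order statistics by the first paragraph, and since the element-order distribution of a finite abelian $p$-group determines its type, $A$ and $G_1$ have the same type and therefore isomorphic lattices; thus $L(G_1)\cong L(A)\cong L(G_2)$. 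Following the template of the proof of \cref{t:main}, showing that $G_2$ is modular amounts to proving that every $2$-generated subgroup of $G_2$ is metacyclic, and showing that $G_2$ is nonhamiltonian amounts to excluding a subgroup isomorphic to the quaternion group $Q_8$.

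The main obstacle is exactly this last $p$-group step: extracting from the strong-independence data the metacyclicity of all $2$-generated subgroups and the absence of $Q_8$. Unlike the abelian case, where $A$ is a face of $\tilde\Sigma(G)$ iff $d(\langle A\rangle)=|A|$ and the complex is governed by the intrinsic ranks of subgroups in a monotone way, for a nonabelian $p$-group the faces of $\tilde\Sigma(G_2)$ need not coincide with $\tilde\Sigma(H)$ on a subgroup $H$, so a forbidden subgroup cannot simply be read off an induced subcomplex. I would instead argue contrapositively: a $2$-generated non-metacyclic subgroup (for instance an extraspecial group of exponent $p$) or a copy of $Q_8$ would force in $\tilde\Sigma(G_2)$ a local configuration of strongly independent pairs and triples that does not occur in the strong-independence complex of any abelian $p$-group, contradicting $\tilde\Sigma(G_2)\cong\tilde\Sigma(G_1)$. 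Pinning down these forbidden configurations, and verifying that the vertex bijection $\phi$ genuinely respects orders and the prime decomposition rather than merely matching global statistics, is where the real work lies.
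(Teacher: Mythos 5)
Your outline follows the same route as the paper: derive nilpotency of $G_2$ from the order statistics, localize the isomorphism at each prime, and in the $p$-group case show that $G_2$ is modular and nonhamiltonian so that \cref{t:main} supplies the lattice isomorphism. But the proposal stops exactly where the substance of the argument begins, and you acknowledge this. The two claims you defer --- that every $2$-generated subgroup of $G_2$ is metacyclic and that $G_2$ has no subgroup isomorphic to $Q_8$ --- are not quick ``forbidden configuration'' checks; they occupy most of \cref{sec:ind}. The exclusion of $Q_8$ (\cref{prop:noQ8}) requires a product formula $\gen{H\cup\{z\}}=H\gen{z}$ for $z$ of order $p$ (\cref{lem:comm}), an argument forcing $\Omega_1(G_2)$ to centralize the candidate copy of $Q_8$, and a count of square roots contradicting the fact that squaring is a homomorphism in $G_1$; metacyclicity (\cref{lem:2meta}, \cref{prop:2meta}) further needs the classification of minimal non-metacyclic $p$-groups, the inequality $\dG(H)\le\dG(\Omega_1(H))$ (\cref{lem:laffey}), and that $G_2$ is powerful for odd $p$ (\cref{prop:powerful}). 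None of this is sketched, so the proof is incomplete at its core.

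There is also a concrete error in your reduction to $p$-groups: you assert that $\phi$ preserves element orders because one can ``read the orders off the chains of cyclic subgroups inside the maximal cliques.'' The enhanced power graph restricted to a maximal cyclic subgroup is a complete graph, so the clique structure cannot distinguish a generator from any other element, and the given $\phi$ need not preserve orders at all. What is true, and what the paper proves (\cref{prop:eq-approx-tilde} and \cref{prop:iso-general-tilde}, via \cite[Thm.~3.2]{ZBM20}), is that $\phi$ can be replaced by an order-preserving bijection inducing the same map on $\asymp$-classes, which suffices because strong independence depends only on these classes when $G_1$ is abelian. Relatedly, your characterization of strong independence in a nilpotent group via $p$-components is only established in the paper for abelian groups (\cref{lem:strong-to-p}), since $\dG$ need not be monotone in a nonabelian $p$-group; for the restriction step you only need that a subset of a Sylow $p$-subgroup is strongly independent in $G$ if and only if it is so in that Sylow subgroup, which does hold, but the stronger statement you invoke is unproven.
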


\noindent
We do not know, in general, if $\Sigma(G_1)\cong \Sigma(G_2)$
 implies $\tilde\Sigma(G_1)\cong \tilde\Sigma(G_2)$.
 What makes this question difficult to answer is that, at present, we do not have a method to directly extract $\tilde\Sigma(G)$ from $\Sigma(G)$. 

In his paper Cameron asked the following question \cite[Qs.~3]{Cam24}: for which groups do the notions of independence and strong independence coincide? In \cref{sec:ind-strongind} we give a complete answer to this question for finite groups. If $G$ is a finite nilpotent group it can be easily seen that
$\Sigma(G)=\tilde\Sigma(G)$ if and only if $G$ is a monotone $p$-group, i.e.\ $G$ has the property that 
for any subgroup tower $K\leq H\leq G$, if $H$ is $d$-generated then so is $K$.
%for every pair $H, K$ of subgroups of $G$ and for every positive integer $d,$ if $H$ is $d$-generated and $K\leq H$, then $K$ is also $d$-generated.
For $p$ an odd prime number, the monotone $p$-groups are classified by Mann in \cite{Mann05, Mann11}, while the monotone $2$-groups have been classified by Crestani and Menegazzo in \cite{CM12}. The case where $G$ is not nilpotent requires more work. It turns out that the property $\Sigma(G) = \tilde\Sigma(G)$ strongly constrains the structure of $G.$ Specifically, we have the following.

\begin{theorem}\label{t:non-nilp}
	Let $G$ be a finite non-nilpotent group. Then
$\Sigma(G) = \tilde\Sigma(G)$ if and only if $G=PQ$ is a Frobenius group such that:
		\begin{enumerate}[label=$(\alph*)$]
			\item $P$ is a normal abelian $p$-subgroup of $G$, where $p$ is a prime;
			\item $Q$ is a cyclic $q$-subgroup of $G$, where $q\neq p$ is a prime;
			\item if $\alpha$ is a generator of $Q$, then exactly one of the following holds:  
			\begin{itemize}
				\item there exists $m\in\Z$ coprime to $p$ such that, for every $x\in P$, one has $x^{\alpha}=x^m$;
				\item $P$ is homocyclic with $\dG(P)=2$ and $|\alpha|$ does not divide $p-1$.
			\end{itemize}
		\end{enumerate}
\end{theorem}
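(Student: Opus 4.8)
The plan is to read off increasingly rigid structural constraints from the equality $\Sigma(G)=\tilde\Sigma(G)$, first fixing the isomorphism type of $G$ as a Frobenius group and then analysing the action on the kernel. Throughout I use the reformulation: since $\tilde\Sigma(G)\subseteq\Sigma(G)$ always holds, $\Sigma(G)=\tilde\Sigma(G)$ is equivalent to the statement that every independent set $A$ is strongly independent, i.e. $\dG(S)\ge|A|$ for every subgroup $S$ with $A\subseteq S\le G$. Two reductions come first. Passing to the $1$-skeleton and invoking \cref{prop:sigma-to-graph} together with \cite{ZBM20}, the hypothesis forces the power graph and the enhanced power graph of $G$ to coincide; this happens exactly when $G$ has no cyclic subgroup of order divisible by two distinct primes, i.e. when every element of $G$ has prime power order. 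Second, the property is inherited by subgroups: for $H\le G$ one checks $\Sigma(H)=\Sigma(G)|_H$, whereas $\tilde\Sigma(G)|_H\subseteq\tilde\Sigma(H)\subseteq\Sigma(H)$, so $\Sigma(G)=\tilde\Sigma(G)$ yields $\Sigma(H)=\tilde\Sigma(H)$; in particular every Sylow subgroup is a monotone $p$-group.

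Next I would fix the global shape of $G$. Being non-nilpotent and a group in which every element has prime power order, $G$ falls under the classification of such groups. The non-solvable case is excluded first: a non-solvable group with this property has a section among the finitely many simple examples ($\Alt(5)$, $\mathrm{PSL}_2(7),\dots$), each of which is $2$-generated yet contains an independent triple (e.g. three involutions generating pairwise-proper subgroups), which cannot be strongly independent because $\dG(G)=2$. Hence $G$ is solvable, so $|G|=p^aq^b$ for two primes $p,q$, and $G$ is a Frobenius or a $2$-Frobenius group; the $2$-Frobenius case is ruled out in the same style (the model case $\Sym(4)$ contains the independent triple $\{(12),(13),(14)\}$ of size $3>\dG(\Sym(4))$). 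Thus $G=PQ$ is Frobenius. Since Frobenius kernels are nilpotent and all elements have prime power order, the kernel $P$ is a $p$-group, giving the normal $p$-subgroup of (a); and the complement, a prime-power-order Frobenius complement, is a $q$-group, necessarily cyclic or generalized quaternion. The quaternion possibility is eliminated because such a complement needs two generators and its faithful fixed-point-free action forces $P$ to be an efficiently generated (cyclic) module, so a module generator together with two generators of the complement gives an independent triple against $\dG(G)=2$; this proves (b), with $Q=\gen{\alpha}$ cyclic.

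It remains to prove that $P$ is abelian and to establish the dichotomy (c). For (a) I would use that $P$ is a monotone $p$-group admitting the fixed-point-free automorphism $\alpha$; combining the structure of monotone $p$-groups with the coprime action, a non-abelian $P$ produces a subgroup in which a basis together with a complement element is independent but of size exceeding the minimal number of generators of the ambient subgroup, contradicting the inherited property. For the action, the decisive observation is that $\alpha$ has order coprime to $p=|P|^{\mathrm{prime}}$, so by coprime action the $\F_p\gen{\alpha}$-module $V=P/\Phi(P)$ is \emph{semisimple}; as $\gen{\alpha}$ is cyclic, each irreducible summand has dimension $1$ (eigenvalue in $\F_p$, scalar piece) if $|\alpha|\mid p-1$, and dimension $2$ (eigenvalues conjugate over $\F_{p^2}$) precisely when $|\alpha|\nmid p-1$. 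This semisimplicity rules out Jordan-type behaviour and reduces everything to understanding the summand structure of $V$.

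The heart of the argument, and the main obstacle, is showing that the property survives only for the two configurations in (c). I would argue as follows. In a Frobenius group that is also a prime-power-order group, every element is a $p$-element of $P$ or a $q$-element, so a generating set must contain at least $\dG_{\mathrm{mod}}(P)$ many $p$-elements to generate $P$ as an $\gen{\alpha}$-module (conjugation by $q$-elements only applies $\alpha$), plus one $q$-element; this gives exact control of $\dG$ of every Frobenius subgroup. If $\alpha$ acts as a power map $x\mapsto x^m$, then the $\gen{\alpha}$-submodules of $P$ are exactly its subgroups, one gets $\dG(S)=\dG(S\cap P)+1$ for every $S\not\le P$, and this bookkeeping, combined with monotonicity of $P$, shows $|A|\le\dG(S)$ for all independent $A$ and all $S\supseteq A$, so the property holds. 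If $\alpha$ is not scalar, then $V$ has either two non-isomorphic $1$-dimensional summands, or a summand of dimension $\ge 2$, or total rank $\ge 3$: in the reducible case two eigenvectors $v_1,v_2$ together with $\alpha$ form an independent triple while $P$ is a cyclic module (so $\dG(G)=2$), and in the remaining cases a basis of $P$ gives an independent set inside $P$ of size exceeding $\dG(G)$; all fail. The only survivor is a single $2$-dimensional irreducible summand: then the triangular shape of automorphisms of a non-homocyclic rank-$2$ abelian $p$-group forbids irreducibility, forcing $P$ homocyclic of rank $2$ with $|\alpha|\nmid p-1$, and conversely every independent set then has size at most $2$ (any two elements of $P$ together with an outside element already regenerate $P$) and is strongly independent. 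The genuinely delicate points are the generator counting in the power-map case and verifying that no independent set of maximal size escapes strong independence; these are where I expect the real work to lie.
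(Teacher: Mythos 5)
Your global route to the Frobenius shape is genuinely different from the paper's: where the paper reformulates $\Sigma(G)=\tilde\Sigma(G)$ as ``monotone with the basis property'' (\cref{prop:Q3toMonBasis}) and then imports the Apisa--Klopsch classification of non-nilpotent groups with the basis property (\cref{prop:AK}), which delivers $G=PQ$ Frobenius with $Q$ cyclic \emph{and} the homogeneity/faithfulness condition on all $\F_p[Q]$-sections in one stroke, you go through the EPPO classification (solvable versus non-solvable, Frobenius versus $2$-Frobenius, cyclic versus quaternion complement). That is a workable alternative, but it costs you a case check over all non-solvable EPPO groups (not only the simple ones), an actual argument in the $2$-Frobenius case rather than ``in the same style'', and a hand re-derivation of the section condition --- your eigenvector-triple argument for two distinct eigenvalues is precisely that re-derivation, and that part is correct.

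The genuine gap is your proof that $P$ is abelian, which is where most of the paper's work lies. Your stated mechanism --- ``a non-abelian $P$ produces a subgroup in which a basis together with a complement element is independent but of size exceeding the minimal number of generators of the ambient subgroup'' --- does not produce a contradiction. Take $P$ the Heisenberg group of order $p^3$ and exponent $p$ with a fixed-point-free cyclic $Q$: this $G$ is Frobenius, EPPO, $P$ is itself monotone with $\dG(P)=\mG(P)=2$, and a set of the form $\{x,y,\alpha\}$ with $\gen{x,y}=P$ is \emph{not} independent (already $\gen{y,\alpha}=G$ when $P/\Phi(P)$ is a simple module), so no oversized independent set appears by your recipe. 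The actual contradiction runs through the characteristic sections of $P$: here $\gamma_2(P)$ has order $p$ and the fixed-point-free action forces $|Q|$ to divide $p-1$, whence by Cayley--Hamilton (\cref{lem:CH}) $\overline{\alpha}$ has an eigenvalue in $\F_p$ and $P/\Phi(P)$ cannot be a simple module --- only \emph{then} does an eigenvector triple appear. For general non-abelian $P$ this must be preceded by a reduction to class $2$ (\cref{lem:Q3quotients}), Mann's structure theorem for monotone $p$-groups of rank at most $2$ (\cref{lem:mann}), the chief-factor count for $\mG$ to pin down $|P|=p^3$ or metacyclic (\cref{lem:p-odd}, \cref{prop:metacyclic}), and, in the metacyclic non-homocyclic case, Menegazzo's theorem that coprime automorphisms have order dividing $p-1$ (\cref{prop:aut-meta}); the case $p=2$ needs a separate argument. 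You also analyse the power-map case only for $P$ abelian, whereas one must rule out a non-abelian $P$ on whose Frattini quotient $\alpha$ acts by a power $m$: the paper does this by noting that $\alpha$ then acts on $\gamma_2(P)$ by $m^2\not\equiv m$, so the abelian $Q$-stable subgroup $\gen{g}\gamma_2(P)$ carries a non-homogeneous module (\cref{prop:Q3->d2}). None of these ingredients is visible in your outline, so the abelianness of $P$ remains unproved.
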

  
\smallskip
\noindent
\textbf{Notation.}
We use standard group theoretic notation and, for a group $G$, write
\begin{itemize}
    \item $H\leq G$ to indicate that $H$ is a subgroup of $G$,
    \item $|g|$ to denote the order of an element $g\in G$;
    \item $\dG(G)$ and $\mG(G)$ to denote respectively the minimum and the maximum cardinality of a minimal generating set of $G$;
    \item $|X|$ and $\gen{X}$ for the cardinality of and subgroup generated by $X\subseteq G$, respectively;
    \item $\ZG(G)$ for the center of $G$;
    \item $\Phi(G)$ for the Frattini subgroup of $G$;
    \item $(\gamma_i(G))_{i\geq 1}$ for the lower central series of $G$.
\end{itemize}
If $p$ is a prime number, $n$ a non-negative  integer, and $P$ a finite $p$-group, we write $\Omega_n(P)$ and $\mho_n(P)$ for the following subgroups:
\[
\Omega_n(P)=\gen{x\in P\mid x^{p^n}=1} \ \ \textup{ and } \ \ \mho_n(G)=\gen{x^{p^n} \mid x\in G}.
\]
We write $C_n$ to indicate a cyclic group of order $n$ and $\texttt{SmallGroup}(n,i)$ to indicate the group of order $n$ and index $i$ in the Small Groups Library of GAP \cite{GAP4}. We use the symbol $\cong$ to denote isomorphism between mathematical objects, when there is no doubt on which category they are considered in.

 \smallskip
 \smallskip
 \noindent
 \textbf{Acknowledgements.} This project is funded by the European Union – NextGenerationEU under the National Recovery and Resilience Plan (NRRP), Mission 4 Component 2 Investment 1.1 - Call PRIN 2022 No.\ 104 of February 2, 2022 of the Italian Ministry of University and Research; Project 2022PSTWLB (subject area: PE - Physical Sciences and Engineering)
 ``Group Theory and Applications''. The second author has been funded by the Italian program Rita Levi Montalcini for young researchers, Edition 2020.
 Both authors are members of the Indam group GNSAGA. We thank Eamonn O'Brien for his help in running computational experiments around Questions \ref{qs:lattices} and \ref{qs:strong} and the anonymous referee for
comments that helped us improve this paper’s exposition.

\section{Graphs and invariant properties}\label{sec:general-rmks}

\noindent
We start by recalling and expanding some of the concepts seen in the introduction. 

\begin{definition}
    Let $G$ be a finite group and let $X$ be a subset of $G$. Then $X$ is called
    \begin{itemize}
        \item \emph{independent} if, for every $x\in X$, one has that $x$ does not belong to $\gen{X\setminus\{x\}}$.
        \item \emph{strongly independent} if $X\subseteq H\leq G$ implies that $|X|\leq \dG(H)$. 
    \end{itemize}
\end{definition}

\begin{remark}\label{rmk:independences} 
    Let $G$ be a finite group and let $X$ be a subset of $G$. If $X$ is independent, then $X$ is a minimal generating set of $\gen{X}$. Assume now that $X$ is strongly independent. Then it follows from the definition that $\dG(\gen{X})=|X|\leq\dG(G)$. In other words, the cardinality of a strongly independent subset of $G$ cannot exceed the minimum number of generators of $G$. The strongly independent subsets of $G$ are in particular independent. 
\end{remark}

\noindent
The collection of independent subsets of a finite group $G$ forms a simplicial complex $\Sigma(G)$ called the \emph{independence complex} of $G$; cf.\ \cite[Prop.~2.1]{Cam24} or \cite[Lem.~2.4.2]{Pin21}. Analogously, the collection of strongly independence subsets of $G$ forms a subcomplex of $\Sigma(G)$, called the \emph{strong independence complex} of $G$ and which we denote with $\tilde\Sigma(G)$; cf.\ \cite[Prop.~2.3]{Cam24}. 
We note that, if $G_1$ and $G_2$ are finite groups and $\Sigma(G_1)\cong\Sigma(G_2)$ or $\tilde\Sigma(G_1)\cong\tilde\Sigma(G_2)$, then the number of $0$-simplices of the complexes are the same and so $G_1$ and $G_2$ have the same order. Moreover, it is also a straightforward consequence of the definitions that, if $G_1$ and $G_2$ have isomorphic (strong) independence complexes, then one is cyclic if and only if the other one is.

\subsection{Graphs associated to independence complexes}\label{sec:graphs}

\noindent
The following notions of graphs on groups are well-studied. See \cite{AKC13,BCDD24,cpg2,CGS09,KQ00,MKL22}, or the survey \cite{Cam22} for a broader outlook. 

\begin{definition}\label{defgraph}
    Let $G$ be a finite group. Then the 
    \begin{itemize}
        \item \emph{power graph} of $G$ is the undirected graph $\mathcal{P}(G)=(G, E)$ such that $\{x,y\}\subseteq G$ belongs to $E$ if and only if $x\neq y$ and $x\in\gen{y}$ or $y\in\gen{x}$. 
        \item \emph{directed power graph} of $G$ is the directed graph $\mathcal{G}(G)=(G, E)$ with $(x,y)\in G^2$ belonging to $E$ if and only if $x\neq y$ and $y\in\gen{x}$. 
        \item \emph{enhanced power graph} of $G$ is the undirected graph $\mathcal{E}(G)=(G, E)$ such that $\{x,y\}\subseteq G$ belongs to $E$ if and only if $x\neq y$ and $\gen{x,y}$ is cyclic. 
    \end{itemize}
\end{definition}

\noindent
The following is an easy consequence of the definitions of $\Sigma(G)$ and $\tilde\Sigma(G)$; see also Propositions 2.1 and 2.3 in \cite{Cam24}.

\begin{proposition}\label{prop:sigma-to-graph}
    Let $G$ be a finite group.
    Denote, moreover, by $\Sigma_1(G)$ and $\tilde\Sigma_1(G)$ the $1$-skeletons of $\Sigma(G)$ and $\tilde\Sigma(G)$, respectively. 
    Then the following hold: 
    \begin{itemize}
        \item The graph complement of $\Sigma_1(G)$ equals $\mathcal{P}(G)$.
        \item The graph complement of $\tilde\Sigma_1(G)$ equals $\mathcal{E}(G)$.
    \end{itemize}
\end{proposition}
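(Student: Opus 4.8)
The plan is to verify each of the two bullet points by directly unwinding the definitions of the $1$-skeletons, of the (strong) independence conditions, and of the two graphs. Since both $1$-skeletons have $G$ as their vertex set (every singleton is a simplex by convention, matching the vertex sets of $\mathcal{P}(G)$ and $\mathcal{E}(G)$), it suffices to show that the two edge sets agree; that is, that for distinct $x, y\in G$ the pair $\{x,y\}$ is a \emph{non-edge} of the $1$-skeleton precisely when it is an edge of the corresponding graph.

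For the first bullet, I would observe that $\{x, y\}$ is a $1$-simplex of $\Sigma(G)$ exactly when it is independent, which by definition means $x\notin\gen{y}$ and $y\notin\gen{x}$. Negating this, $\{x,y\}$ is a non-edge of $\Sigma_1(G)$ if and only if $x\in\gen{y}$ or $y\in\gen{x}$, which is exactly the defining condition for $\{x,y\}$ to be an edge of the power graph $\mathcal{P}(G)$. Hence the graph complement of $\Sigma_1(G)$ equals $\mathcal{P}(G)$.

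For the second bullet, the key intermediate step is the characterization: for distinct $x, y\in G$, the set $\{x,y\}$ is strongly independent if and only if $\gen{x, y}$ is non-cyclic. One direction is immediate: if $\gen{x, y}$ is cyclic, then it is a subgroup containing $\{x,y\}$ with $\dG(\gen{x,y})=1<2=|\{x,y\}|$, violating strong independence. For the converse, if $\gen{x, y}$ is non-cyclic, then any subgroup $H$ with $\{x,y\}\subseteq H$ contains $\gen{x, y}$ and is therefore itself non-cyclic (subgroups of cyclic groups are cyclic), so $\dG(H)\geq 2$; this is precisely strong independence. Given this characterization, $\{x,y\}$ is a non-edge of $\tilde\Sigma_1(G)$ if and only if $\gen{x, y}$ is cyclic, which matches the defining condition of the enhanced power graph $\mathcal{E}(G)$, and the second bullet follows.

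The argument presents no genuine obstacle; the only point requiring care is the characterization of strongly independent pairs via non-cyclicity, which rests on the elementary fact that subgroups of cyclic groups are cyclic, together with the equivalence between $\dG(H)\geq 2$ and $H$ being non-cyclic for nontrivial $H$. I would also note in passing that the identity element poses no difficulty: it lies in $\gen{x}$ for every $x$, so it is joined to every other vertex in both $\mathcal{P}(G)$ and $\mathcal{E}(G)$, consistently with the fact that $\{1, x\}$ is never (strongly) independent.
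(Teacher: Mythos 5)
Your proof is correct and matches the paper's intent: the paper omits a written proof, stating only that the proposition is an easy consequence of the definitions (citing Propositions 2.1 and 2.3 of Cameron's paper), and your direct unwinding of the definitions — including the characterization of strongly independent pairs via non-cyclicity of $\gen{x,y}$ — is exactly the argument being alluded to.
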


\noindent
The following results are respectively \cite[Prop.~1]{cpg2} and a combination of \cite[Cor.~3.1]{ZBM20} and \cite[Prop.~1]{cpg2}. 

\begin{lemma}\label{lem:respect-orders}
    Let $G$ and $H$ be finite groups and let $\varphi:\mathcal{G}(G)\rightarrow\mathcal{G}(H)$ be an isomorphism of directed graphs. Then, for every $x\in G$, one has $|\varphi(x)|=|x|$. 
\end{lemma}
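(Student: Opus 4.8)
The plan is to recover the order of an element directly from a purely local invariant of the directed power graph, namely the out-degree of the corresponding vertex. The key observation is that, for every $x\in G$, the set of out-neighbours of $x$ in $\mathcal{G}(G)$ is exactly $\gen{x}\setminus\{x\}$: by definition $(x,y)$ is an edge precisely when $y\neq x$ and $y\in\gen{x}$. Consequently the out-degree of $x$ equals $|\gen{x}|-1=|x|-1$, and this identity holds uniformly over all $x\in G$, including the identity (where both sides vanish).

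I would then use that an isomorphism of directed graphs preserves out-degrees. Since $\varphi$ is a bijection on vertex sets satisfying $(x,y)\in E_G$ if and only if $(\varphi(x),\varphi(y))\in E_H$, it restricts to a bijection from the out-neighbourhood of $x$ onto that of $\varphi(x)$; in particular the two out-degrees coincide.

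Combining the two steps closes the argument: the quantity $|x|-1$ equals the out-degree of $x$ in $\mathcal{G}(G)$, which equals the out-degree of $\varphi(x)$ in $\mathcal{G}(H)$, which equals $|\varphi(x)|-1$. Cancelling, we obtain $|\varphi(x)|=|x|$ for every $x\in G$.

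There is no serious obstacle here, and the proof is essentially a single observation. The only point that deserves attention is that the orientation of the graph is genuinely used: it is precisely because we count out-neighbours (rather than the neighbours in the underlying undirected power graph) that the degree equals $|x|-1$ on the nose. One should also confirm that the out-neighbourhood description is correct at the identity, so that the clean formula relating out-degree and order holds with no exceptional cases.
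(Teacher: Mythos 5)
Your proof is correct: the out-neighbourhood of $x$ in $\mathcal{G}(G)$ is exactly $\gen{x}\setminus\{x\}$, so the out-degree recovers $|x|-1$ and is preserved by any digraph isomorphism. The paper does not prove this lemma itself but cites \cite[Prop.~1]{cpg2}, where the argument is essentially the same out-degree count, so your approach matches the intended one.
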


\begin{proposition}\label{prop:eq-graphs}
    Let $G$ and $H$ be finite groups. The following are equivalent:
    \begin{enumerate}[label=$(\arabic*)$]
        \item $\mathcal{P}(G)$ and $\mathcal{P}(H)$ are isomorphic;
        \item $\mathcal{G}(G)$ and $\mathcal{G}(H)$ are isomorphic;
        \item $\mathcal{E}(G)$ and $\mathcal{E}(H)$ are isomorphic.
    \end{enumerate}
    Moreover, if any of the previous equivalent conditions is satisfied, for every integer $n$, the following equality is satisfied:
    $    |\{g\in G : |g|=n\}|=|\{h\in H : |h|=n\}|$.
\end{proposition}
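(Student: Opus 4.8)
The plan is to organise everything around the directed power graph $\mathcal{G}(G)$, which is the finest of the three objects, and to combine two reconstruction results from the literature with a couple of elementary observations. First I would record the implications that are immediate from the definitions. Since $\{x,y\}$ is an edge of $\mathcal{P}(G)$ exactly when $(x,y)$ or $(y,x)$ is an edge of $\mathcal{G}(G)$, the power graph is the underlying undirected graph of the directed power graph; hence any isomorphism $\mathcal{G}(G)\to\mathcal{G}(H)$ of directed graphs is, on vertices, an isomorphism $\mathcal{P}(G)\to\mathcal{P}(H)$, which gives $(2)\Rightarrow(1)$. For $(2)\Rightarrow(3)$ I would note that, for $x\neq y$, the subgroup $\gen{x,y}$ is cyclic if and only if there is some $z$ with $x,y\in\gen{z}$, and that $x\in\gen{z}$ is equivalent to $x=z$ or $(z,x)$ being an edge of $\mathcal{G}(G)$; thus adjacency in $\mathcal{E}(G)$ is a property of $\mathcal{G}(G)$ expressible through out-neighbourhoods, and so is preserved by every isomorphism of directed power graphs. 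Finally, the out-neighbours of $x$ in $\mathcal{G}(G)$ are exactly the elements of $\gen{x}\setminus\{x\}$, so the out-degree of $x$ equals $|x|-1$; this both reproves \cref{lem:respect-orders} and shows that such an isomorphism preserves the number of elements of each order.

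The substance of the proposition, and where I expect the main difficulty, is the converse reconstruction $(1)\Rightarrow(2)$: recovering the orientation of the power graph, which is Cameron's theorem from \cite{cpg2}. The strategy is to recover the relation $\gen{x}\subseteq\gen{y}$ --- equivalently, the orientation of each edge --- from the undirected graph alone. One declares $x\equiv y$ when $x$ and $y$ have the same closed neighbourhood in $\mathcal{P}(G)$; away from a controlled list of exceptions, this equivalence identifies precisely the elements generating the same cyclic subgroup, so its classes are the generating sets of the cyclic subgroups of $G$. Passing to the quotient by $\equiv$ and reading off closed-neighbourhood containment then reconstructs the containment order among cyclic subgroups, hence the orientation. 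The delicate point, and the reason this is a genuine theorem rather than a formality, is the treatment of the exceptional classes: most notably the identity, whose closed neighbourhood is all of $G$, and, when $G$ is cyclic, its generators, which share that same closed neighbourhood while generating a different subgroup. These cases must be isolated and handled separately.

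To close the cycle I would use $(3)\Rightarrow(1)$, the reconstruction of the power graph from the enhanced power graph, which is \cite[Cor.~3.1]{ZBM20}; here the mechanism is that the clique structure of $\mathcal{E}(G)$ encodes the maximal cyclic subgroups of $G$, allowing one to locate the power-graph edges inside $\mathcal{E}(G)$ and recover $\mathcal{P}(G)$ up to isomorphism. Combining $(1)\Rightarrow(2)$, the elementary $(2)\Rightarrow(3)$, and $(3)\Rightarrow(1)$ yields the equivalence of the three conditions. For the final clause, any of the three conditions implies $(2)$, and by the out-degree computation above (equivalently \cref{lem:respect-orders}) an isomorphism $\mathcal{G}(G)\to\mathcal{G}(H)$ sends each element to one of the same order, so $|\{g\in G:|g|=n\}|=|\{h\in H:|h|=n\}|$ for every $n$.
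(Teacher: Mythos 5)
Your proposal is correct and follows essentially the same route as the paper, which simply cites \cite[Prop.~1]{cpg2} for the power-graph/directed-power-graph equivalence and \cite[Cor.~3.1]{ZBM20} for the enhanced-power-graph direction; your elementary verifications of $(2)\Rightarrow(1)$, $(2)\Rightarrow(3)$ and the order-counting clause via out-degrees are all sound and merely make explicit what the paper leaves to the references.
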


\noindent
Combining \cref{prop:sigma-to-graph} and \cref{prop:eq-graphs}, a natural question is whether there is any clear relationship between independence and strong independence complexes of a finite group. The next example provides an infinite family of pairs of $p$-groups with isomorphic strong independence complexes, but whose independence complexes are not isomorphic. We remind the reader that the \emph{rank} of a finite group $G$, denoted $\mathrm{rk}(G)$, is the smallest nonnegative integer $r$ such that every subgroup of $G$ can be generated by $r$ elements. It is easy to see that, when $p$ is a prime number and $G$ is a $p$-group, then the largest simplices in $\Sigma(G)$ have cardinality $\mathrm{rk}(G)$, i.e.\ dimension $\mathrm{rk}(G)-1$.    

\begin{example}\label{ex:strong}
Let $p\geq 5$ be a prime number and let 
\begin{itemize}
    \item $G_1$ be the unique group of maximal class of order $p^5$ and exponent $p$, up to isomorphism, containing a maximal abelian subgroup; cf.\ \cite[Thm.~4.3]{Bla58} (in Blackburn's classification take $\alpha=\beta=\gamma=\delta=0$).
    \item $G_2$ be the (unique) free $2$-generated, exponent $p$,  class $3$ group; then $|G_2|=p^5$. 
\end{itemize}
The group $G_1$ has clearly rank $4$, while $G_2$ has rank $3$ as $\gamma_2(G_2)$ is self-centralizing in $G_2$. Moreover, both groups are $2$-generated so, in the study of their strong independence complexes, we are only concerned with minimal generating sets of $2$-generated subgroups. The exponent of both groups being $p$, the number of $1$- and $2$-simplices is the same and equal to
\begin{itemize}
    \item $p^5-1$ for $1$-simplices,
    \item $\binom{p^5-1}{2}-\left(\frac{p^5-1}{p-1}\right)\binom{p-1}{2}$ for $2$-simplices.
\end{itemize}
The strong independence complexes of $G_1$ and $G_2$ are therefore isomorphic. 
\end{example}

\noindent
%We are not aware of examples in the opposite direction and refer to the following sections for a broader discussion; cf.\ \cref{qs:strong}
%\Andrea{Forse, una volta sistemata l'introduzione, questa parte si può togliere, si tratta infatti di osservazioni che saranno già ben evidenziate nell'introduzione} \Mima{d'accordo}. In this regard, 
In the direction of better understanding the relationship between independence and strong independence, in \cite[Qs.~3]{Cam24} Cameron asks the following. 

\begin{question}\label{qs:Q3}
What are the finite groups $G$
for which $\Sigma(G)=\tilde\Sigma(G)$?    
\end{question}

\noindent
Thanks to \cref{rmk:independences}, \cref{qs:Q3} is equivalent to asking which are the finite groups $G$ for which the following is satisfied:
\begin{equation}\label{eq:Q3}
    H\leq K\leq G \quad \Longrightarrow \quad \mG(H)\leq \dG(K). 
\end{equation} 

\noindent
In \cite[Thm.~2.4]{Cam24}, Cameron remarks that in a finite group $G$ with $\Sigma(G)=\tilde\Sigma(G)$ every element has prime power order.
Moreover, if $p$ is a prime number and $G$ is a finite abelian $p$-group, then \cite[Thm.~2.5]{Cam24} ensures that $\Sigma(G)=\tilde\Sigma(G)$.  
Though we give a complete classification in \cref{sec:ind-strongind} of the groups from \cref{qs:Q3}, the following remains open. 

\begin{question}
    For finite groups $G_1$ and $G_2$, if $\Sigma(G_1)=\tilde\Sigma(G_1)$ and $\Sigma(G_2)\cong \Sigma(G_1)$, then does it necessarily hold that 
$\Sigma(G_2)=\tilde\Sigma(G_2)$? 
\end{question}

\noindent
The last question once again hints at the fact that, at this moment, we still have no method for telling which simplices of $\Sigma(G)$ belong to $\tilde\Sigma(G)$. 

%\subsection{Invariant properties under isomorphism of complexes}\label{sec:invariants}

\subsection{Graph-theoretic tools for independence complexes}

\noindent
%In this section we collect some invariant properties of the (strong) independence complexes of a finite group, in connection to (the complement of) its $1$-skeleton.  
%We will build upon ideas from 
In this section, we collect a number of results -- mostly from \cite{BP24}, \cite{cpg2}, and \cite{ZBM20} -- that we will employ to determine invariant properties of (strong) independence complexes. 
For $x$ in a group $G$, we write 
\begin{align*}
N(x) &=
    \{z\in G : \{x,z\} \textup{ is an edge in } \mathcal{P}(G)\}\cup \{x\}, \\
    N[x] & =\{z\in G : N(x)=N(z)\}, \\
    C[x] & =\{z\in G : \gen{x}=\gen{z}\},
\end{align*}
and, if $y$ is another element of $G$, we write 
    \begin{itemize}
        \item $x\equiv y$ if $N(x)=N(y)$, and %$x$ and $y$ have the same closed neighborhoods in $\mathcal{P}(G)$.
        \item $x\approx y$ if $\gen{x}=\gen{y}$. 
    \end{itemize}
    Then $\equiv$ and $\approx$ are equivalence relations on $G$ such that $\equiv$-classes are unions of $\approx$-classes; cf.\ \cite{BP24,cpg2}. We will call \emph{$N$-class} (as is done in \cite{BP24}) an equivalence class with respect to $\equiv$ and \emph{$C$-class} an equivalence class with respect to $\approx$.
   We adopt the notation from \cite{cpg2}, but remark that 
$\approx$ is denoted $\diamond$ in \cite{BP24}. Given a non-empty subset $X$ of $G$, we will write 
$$N(X)=\bigcap_{x\in X}N(x) \ \ \textup{ and } \ \  \hat X=N(N(X)).$$
We define the collection of \emph{star vertices} of $G$ as
$$\mathcal{S}(G)=N[1]=\{z\in G : {N}(z)=G\}.$$ 
The next result is derived from \cite[Proposition 4]{BP24}.

\begin{lemma}\label{it:N1.5} Let $G$ be a finite noncyclic abelian group. Then $\mathcal{S}(G)=\{1\}$.
\end{lemma}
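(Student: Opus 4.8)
The plan is to prove the two inclusions separately, the inclusion $\{1\}\subseteq\mathcal{S}(G)$ being immediate: since $1\in\gen{w}$ for every $w\in G$, the identity is power-graph adjacent to all other elements, so $1\in\mathcal{S}(G)$ by definition of star vertex. The whole content of the lemma is therefore the reverse inclusion, namely that no nonidentity element of a finite noncyclic abelian group can be a star vertex.

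To set this up, I would first reformulate the star-vertex condition in the abelian setting. If $z\in\mathcal{S}(G)$, then $z$ is adjacent in $\mathcal{P}(G)$ to every $w\in G$, i.e.\ for each $w$ one has $z\in\gen{w}$ or $w\in\gen{z}$; since $G$ is abelian this is exactly the statement that $\gen{z}$ is comparable to every cyclic subgroup, that is, $\gen{w}\subseteq\gen{z}$ or $\gen{z}\subseteq\gen{w}$. The structural input I would invoke is that a finite abelian group is noncyclic if and only if some Sylow subgroup is noncyclic, and a noncyclic abelian $p$-group contains a subgroup isomorphic to $C_p\times C_p$. So there is a prime $p$ and elements $a,b\in G$ of order $p$ with $\gen{a}\neq\gen{b}$ and $\gen{a,b}\cong C_p\times C_p$.

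With these two order-$p$ subgroups in hand, I would argue by contradiction from a hypothetical nonidentity star vertex $z$. Applying comparability to $a$ gives $\gen{z}\subseteq\gen{a}$ or $\gen{a}\subseteq\gen{z}$. In the first case, since $|a|=p$ and $z\neq 1$, we get $\gen{z}=\gen{a}$; but then comparability with $b$ forces $\gen{a}=\gen{z}\subseteq\gen{b}$ or $\gen{b}\subseteq\gen{a}=\gen{z}$, each of which implies $\gen{a}=\gen{b}$ (equal orders), contradicting $\gen{a}\neq\gen{b}$. Hence $\gen{a}\subseteq\gen{z}$, and by the identical argument applied to $b$ we also get $\gen{b}\subseteq\gen{z}$. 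Consequently $C_p\times C_p=\gen{a,b}\subseteq\gen{z}$, which is impossible because $\gen{z}$ is cyclic. This contradiction shows $\mathcal{S}(G)=\{1\}$.

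There is no genuinely hard step here: the only nonelementary ingredient is the standard structure-theoretic fact producing the $C_p\times C_p$ subgroup, and once the star-vertex condition is rephrased as comparability of cyclic subgroups, the contradiction is forced by the existence of two distinct subgroups of order $p$. The single point that warrants care in the writeup is the possibility $z\in\{a,b\}$; this is harmless, since it simply lands in the case $\gen{z}=\gen{a}$ (or $\gen{z}=\gen{b}$), where the contradiction is then extracted from the other order-$p$ element.
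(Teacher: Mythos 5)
Your proof is correct. The paper itself does not argue this lemma at all: it simply cites \cite[Proposition~4]{BP24}, which describes the star vertices of the power graph of an arbitrary finite group and in particular yields $\mathcal{S}(G)=\{1\}$ for noncyclic abelian $G$. Your argument is therefore a genuinely different (and self-contained) route: you rephrase the star-vertex condition as comparability of $\gen{z}$ with every cyclic subgroup, extract a subgroup $\gen{a,b}\cong C_p\times C_p$ from a noncyclic Sylow subgroup, and derive the contradiction that the cyclic group $\gen{z}$ would have to contain $C_p\times C_p$. Each step checks out, including the reduction (a finite abelian group is noncyclic iff some Sylow subgroup is, and a noncyclic abelian $p$-group contains $C_p\times C_p$) and your handling of the degenerate case $z\in\{a,b\}$; note also that the comparability you use ($z\in\gen{w}$ iff $\gen{z}\subseteq\gen{w}$) does not even require commutativity, so abelianness enters only through the structure theory. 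What your approach buys is independence from \cite{BP24} at the cost of a short explicit argument; what the citation buys the authors is brevity and consistency with the surrounding machinery ($N$-classes, plain versus compound types) that is developed in that reference anyway.
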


\noindent
The remaining part of the section collects a veriety of results on $N$- and $C$-classes.

\begin{lemma}\label{lem:eq-approx}
    Let $p$ be a prime number and let $G$ be a noncyclic abelian $p$-group. Then every $x\in G$ satisfies $N[x]=C[x]$. 
\end{lemma}

\begin{proof}
%Assume now that $G$ is an abelian $p$-group that is not cyclic. 
We show that every $N$-class of elements of $G$ is also a $C$-class.  To see this, let $x$ and $y$ in $G$ satisfy ${N}(x)={N}(y)$: we show that $\gen{x}=\gen{y}$. As $x$ and $y$ are adjacent in $\mathcal{P}(G)$, we assume for a contradiction that $|x|\neq |y|$ and, without loss of generality, that $y\in\gen{x^p}$. Let now $z\in G\setminus\gen{x}$ be of order $p$, where the existence of $z$ is guaranteed by the fact that $G$ is not cyclic. Then the following are satisfied:
\begin{itemize}
    \item $y$ and $xz$ are adjacent in $\mathcal{P}(G)$ because $y\in\gen{x^p}=\gen{(xz)^p}$, while
    \item $x$ and $xz$ are not adjacent in $\mathcal{P}(G)$ because $z\notin \gen{x}$. 
\end{itemize}
We deduce that $xz\in N(y)\setminus N(x)$. This contradicts the fact that $x$ and $y$ are in the same $N$-class and so the proof is complete.
\end{proof}

%\noindent
% Until the end of the present section, we will make use of the notation introduced above (including that from \cref{lem:eq-approx}) without further mention. Following \cite{BP24}, we will call \emph{$N$-class} an equivalence class with respect to $\equiv$. Moreover, we call \emph{$C$-classes} the equivalence classes with respect to $\approx$. 

\noindent
The following terminology is taken from \cite{BP24}.

\begin{definition}
   Let $G$ be a finite group. An $N$-class $N=N[x]$ is %Then $N$ is a union of equivalence classes with respect to $\approx$. 
\begin{itemize}
    \item \emph{of plain type} if it consists of a single $C$-class, i.e.\ if every $y\in N$ satisfies $x\approx y$; 
    \item \emph{of compound type} if it is not of plain type;
    \item a \emph{critical class} if
$\hat N = N \dot\cup \{1\}$ 
and there exist a prime number $p$ and an integer $r \geq  2$  with $|\hat N|=p^r$.
\end{itemize}
\end{definition}

% \noindent
% The next lemma follows directly from \cref{rmk:nuova}.

\noindent
If $p$ is a prime number and $G$ is a finite noncyclic abelian $p$-group, then \cref{lem:eq-approx} guarantees that every $N$-class in $G$ is plain. 
In Cameron's paper \cite{cpg2}, classes of plain type ar referred to as classes of type (a), while those of compound type are called of type (b); cf.\ \cite[Prop.~5]{cpg2}.  
%In various cases, the results of \cite{BP24} allow us to tell plain classes apart from compound ones as we explain below. 
% Following \cite{BP24}, a \emph{critical class} in $G$ is an $N$-class $N$ such that 
% $\hat N = N \dot\cup \{1\}$ 
% and there exist a prime $p$ and an integer $r \geq  2$  with $|\hat N|=p^r$ (see \cite[Def.~16]{BP24}).
An $N$-class $N\neq \mathcal{S}(G)$ that is not critical  can be
immediately recognized as plain or compound by arithmetical considerations on $|N|$ and $|\hat N|$; see \cite[Prop.~17]{BP24}. 
% On the other hand, when $\mathcal S(G)=\{1\},$ it is possible to recognize if a critical class is plain or compound by purely graph theoretic considerations, using the following criterion (see \cite[Prop.~19]{BP24}): {\color{red}[non mi piace molto come vengono formattate le due righe successive. sebra che l'uguale sia un uguaglianza tra la parte a sx e quella a dx della frase]}
% \begin{center}
%     a critical class $N=N[y]$ is of plain type if and only if there exists $x\in G \setminus \hat N$ such that $x$ is adjacent to $y$ in $\mathcal{P}(G)$ and  $|N[x]|\leq |N|$. 
% \end{center}
% {\color{blue}proviamo cosi'?:\newline
On the other hand, when $\mathcal S(G)=\{1\},$ it is possible to recognize if a critical class is plain or compound by purely graph theoretic considerations. Indeed,  by \cite[Prop.~19]{BP24}, for a critical class $N=N[y]$, the following are equivalent: 
\begin{itemize}
    \item $N$ is of plain type;
    \item there exists $x\in G \setminus \hat N$ such that $x$ is adjacent to $y$ in $\mathcal{P}(G)$ and  $|N[x]|\leq |N|$.
\end{itemize}
%}
The next result is now immediate.

\begin{lemma}\label{lem:plaintoplain}Assume that $G_1$ and $G_2$ are finite groups and that $\varphi: \mathcal P(G_1)\to \mathcal P(G_2)$ is an isomorphism between their power graphs. If $\mathcal S(G_1)=\{1\},$ then, for every $x\in G_1,$ the class $N[x]$ is of plain type if and only if the class $N[\varphi(x)]$  is of plain type.
\end{lemma}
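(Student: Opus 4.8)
The plan is to observe that all the data that \cref{rmk:nuova} uses to distinguish plain classes from compound ones are invariants of the power graph, hence preserved by any isomorphism of power graphs, provided we first know that $\varphi$ fixes the identity. Fix $x\in G_1$ and write $N=N[x]$.

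First I would record that $\varphi$ transports every purely graph-theoretic construction. Since $N(x)$, the classes $N[x]$, the operation $X\mapsto\hat X=N(N(X))$, adjacency, and cardinalities are all defined from $\mathcal{P}(G)$ alone, we have $\varphi(N)=N[\varphi(x)]$, $\varphi(\hat N)=\widehat{N[\varphi(x)]}$, and $|N|=|N[\varphi(x)]|$, $|\hat N|=|\widehat{N[\varphi(x)]}|$. In particular $\varphi$ carries the set of universal vertices of $\mathcal{P}(G_1)$ onto that of $\mathcal{P}(G_2)$, so $\varphi(\mathcal{S}(G_1))=\mathcal{S}(G_2)$. As the identity is always a universal vertex of a power graph, the hypothesis $\mathcal{S}(G_1)=\{1\}$ forces $\mathcal{S}(G_2)=\{\varphi(1)\}$; since also $1\in\mathcal{S}(G_2)$, we conclude $\varphi(1)=1$ and $\mathcal{S}(G_2)=\{1\}$. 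This is the point that makes criticality transfer, as criticality is defined with reference to the identity element.

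Then I would split along the trichotomy implicit in \cref{rmk:nuova}. If $N=\mathcal{S}(G_1)=\{1\}$, then both $N$ and $N[\varphi(x)]=\mathcal{S}(G_2)=\{1\}$ are single $C$-classes, hence plain, and there is nothing to prove. If $N\neq\mathcal{S}(G_1)$ and $N$ is not critical, then by \cite[Prop.~17]{BP24} the type of $N$ is determined by the pair $(|N|,|\hat N|)$; since these cardinalities are preserved by $\varphi$, and criticality is itself preserved (it is phrased in terms of $N$, $\hat N$, the element $1$, and $|\hat N|$, all transported by $\varphi$), the class $N[\varphi(x)]$ is non-critical with the same invariants and so has the same type. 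Finally, if $N$ is critical, I would invoke the criterion of \cite[Prop.~19]{BP24}: since $\mathcal{S}(G_1)=\{1\}$, the class $N=N[x]$ is of plain type if and only if there exists $w\in G_1\setminus\hat N$ adjacent to $x$ in $\mathcal{P}(G_1)$ with $|N[w]|\leq|N|$. Every ingredient here is graph-theoretic, and we have already checked $\mathcal{S}(G_2)=\{1\}$, so the same criterion applies verbatim in $G_2$ and holds for $N[\varphi(x)]$ if and only if it holds for $N$.

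The main obstacle, and the only genuinely delicate point, is the critical case: the criterion distinguishing plain from compound critical classes is available not for arbitrary groups but precisely under the hypothesis $\mathcal{S}(G)=\{1\}$. The crux is therefore to verify that this hypothesis passes from $G_1$ to $G_2$, i.e.\ to prove $\varphi(1)=1$ and $\mathcal{S}(G_2)=\{1\}$, so that \cite[Prop.~19]{BP24} can be applied on both sides. Everything else reduces to the bookkeeping observation that, under this hypothesis, being of plain type is a property of the power graph alone, hence preserved by $\varphi$.
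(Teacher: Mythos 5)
Your proof is correct and follows essentially the same route as the paper, which simply notes that the lemma ``follows directly from'' the preceding remark: you have unpacked that remark by checking that $\mathcal{S}(G_2)=\{1\}$ and $\varphi(1)=1$, and then observing that the plain/compound dichotomy is decided by purely graph-theoretic data (the invariants $|N|$, $|\hat N|$ in the non-critical case and the criterion of \cite[Prop.~19]{BP24} in the critical case), all of which are transported by $\varphi$. No gaps; your version is just a more explicit writeup of the paper's argument.
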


% \begin{proof}
% \Mima{preferirei se spostassimo questa discussione nella remark di sopra (in cui possiamo essere un po' meno precisi). faccio una proposta di come parafrasare sopra, facendo anche minime modifiche come chiamare le classi N invece di C.}\Andrea{d'accordo!}
% This follows from a series of results proved  in \cite{BP24}, which ensure that, when $G$
% is a finite group and $\mathcal S(G)=\{1\}$,  it is possible to recognize whether a class $N[x]$ is of plain or compound type using information about the neighbourhoods   of $x$ in the graph that are preserved by graph isomorphisms. We need to recall a definition before giving a more precisely description of these results. A critical class is an $N$-class $N$ such that 
% $\hat N = N \dot\cup \{1\}$ 
% and there exist a prime $p$ and an integer $r \geq  2$  with $|\hat N|=p^r$ (see \cite[Definition 16]{BP24}). An $N$-class which is not critical and different from $\mathcal{S}(G)$ can be
% immediately recognized as plain or compound by arithmetical considerations of its
% size and the size of its closure (see \cite[Proposition 17]{BP24}). On the other hand, when $\mathcal S(G)=1,$ it is possible to recognize if a critical class is plain or compound by purely graph theoretical considerations, using the following criterion (see \cite[Proposition 19]{BP24}): a critical class $C=N[y]$ is of plain type if and only if there exists $x \notin \hat C$ which is adjacent to $y$ and such that $|N[x]|\leq |C|.$
% \end{proof}

\noindent
The following technical lemma is crucial for the ``local theory'' of independence complexes we develop in the next section.

\begin{lemma}\label{lem:classes} Let $G$ be a finite group and let $N$ be an $N$-class in $G$. The following hold:\begin{enumerate}[label=$(\arabic*)$]
  \item\label{it:N1} If $N$ is of plain type then  all elements of $N$ have the same order, say $n$, and $|N|=\phi(n)$, where $\phi$ denotes Euler's function.
\item\label{it:N2}If $N\neq \mathcal S(G)$ is of compound type then there exist a prime number $p$ and a positive integer $r$ such that $|\hat N|=p^r$ and the elements of $N$ have order a power of $p.$
\end{enumerate}\end{lemma}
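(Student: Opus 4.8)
The plan is to analyze the structure of an $N$-class $N=N[x]$ through its associated set $\hat N = N(N(N))$ and the relationship between the power graph neighborhoods and cyclic generation. Recall from \cref{rmk:nuova} that $N$ being of plain type means it consists of a single $C$-class, so all its elements generate the same cyclic subgroup $\gen{x}$; this immediately gives that they share a common order $n=|x|$ and that $|N|$ equals the number of generators of a cyclic group of order $n$, namely $\phi(n)$. So for part \ref{it:N1} I would first unwind the definition of plain type and observe that $N = C[x] = \{z : \gen{z}=\gen{x}\}$, from which both claims are routine: the elements of $\gen{x}$ generating the whole group are precisely the $\phi(n)$ elements of order $n$.

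For part \ref{it:N2}, I would suppose $N\neq\mathcal{S}(G)$ is of compound type, meaning $N$ strictly contains the $C$-class of $x$. The strategy is to exploit the defining property of the equivalence $\equiv$: all elements of $N$ have the same neighborhood in $\mathcal{P}(G)$, so they are pairwise adjacent (each lies in $N(x)$), hence for any $y,z\in N$ we have $y\in\gen{z}$ or $z\in\gen{y}$. Since $N$ contains at least two distinct $C$-classes, there exist $y,z\in N$ with $\gen{y}\neq\gen{z}$ but $\{y,z\}$ an edge of $\mathcal{P}(G)$, forcing (say) $z\in\gen{y}$ with $|z|<|y|$ properly dividing it. The key point I want to extract is that the elements of $N$, being mutually connected in the power graph with proper containments of cyclic subgroups, must all be powers of a single element generating a cyclic $p$-group for some prime $p$: if the common order involved two distinct primes, one could produce an element adjacent to some but not all members of $N$, contradicting that $N$ is a single $\equiv$-class. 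This is the argument in spirit identical to the contradiction constructed in the proof of \cref{lem:eq-approx}, and I would model the reasoning on it.

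More precisely, to pin down that the orders are $p$-powers and that $|\hat N|=p^r$, I would invoke the theory of critical classes recalled in \cref{rmk:nuova}: a compound $N$-class $N\neq\mathcal{S}(G)$ should be shown to be critical, i.e.\ to satisfy $\hat N = N\,\dot\cup\,\{1\}$ with $|\hat N|=p^r$ for some prime $p$ and $r\geq 2$. The cleanest route is to cite \cite[Def.~16, Prop.~17]{BP24}, where it is established that a non-critical $N$-class (other than $\mathcal{S}(G)$) is automatically of plain type; contrapositively, a compound class different from $\mathcal{S}(G)$ is critical, which by definition yields a prime $p$ and integer $r\geq 2$ with $|\hat N|=p^r$. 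Once $\hat N$ is a $p$-group of order $p^r$, and since $N\subseteq\hat N\setminus\{1\}$ with every element of $N$ generating a nontrivial subgroup of this $p$-group, it follows that each element of $N$ has order a power of $p$.

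The main obstacle I anticipate is correctly handling the interface between the purely combinatorial notion of critical class from \cite{BP24} and the order-theoretic conclusion: one must be careful that the identification of $N$ as critical really does follow from it being compound and distinct from $\mathcal{S}(G)$, rather than being an independent hypothesis. I would therefore spend the bulk of the proof verifying that the dichotomy in \cite[Prop.~17]{BP24} applies to $N$ — in particular checking that $N\neq\mathcal{S}(G)$ excludes the star-vertex exceptional case — and only then reading off $|\hat N|=p^r$ and the prime-power order of the elements. Part \ref{it:N1} is essentially immediate, whereas part \ref{it:N2} is where all the genuine content (and the reliance on the machinery of \cite{BP24}) resides.
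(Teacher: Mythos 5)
Your treatment of part \ref{it:N1} is exactly the paper's: plain type means $N=C[x]$, so all elements generate $\gen{x}$ and $|N|=\phi(|x|)$. For part \ref{it:N2}, however, the paper simply invokes the structural results \cite[Props.~10 and~13]{BP24}, which directly assert that a compound class $N\neq\mathcal{S}(G)$ has all its elements of $p$-power order and that $\hat N$ is a $p$-set of cardinality $p^r$. Your detour through critical classes has a genuine gap: you claim that \cite[Prop.~17]{BP24} shows a non-critical class other than $\mathcal{S}(G)$ is \emph{automatically plain}, and take the contrapositive ``compound and $\neq\mathcal{S}(G)$ $\Rightarrow$ critical''. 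But Proposition 17 (as summarized in \cref{rmk:nuova}) is a \emph{recognition} statement: it says that for a non-critical class one can \emph{decide} whether it is plain or compound from the arithmetic of $|N|$ and $|\hat N|$ --- which presupposes that non-critical classes of both types occur. It does not say non-critical implies plain, so the implication you need cannot be extracted from it.

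There is a second, related problem even if one grants $|\hat N|=p^r$: the definition of a critical class only constrains the \emph{cardinality} of $\hat N$, not its algebraic structure. To conclude that every element of $N\subseteq\hat N$ has order a power of $p$ you need to know that $\hat N$ is (contained in) a $p$-subgroup, not merely a set of size $p^r$. That is precisely the content of \cite[Props.~10 and~13]{BP24}, so your argument ends up silently relying on the very results it was trying to replace. The fix is simply to cite those two propositions for part \ref{it:N2}, as the paper does; the critical-class machinery of \cite[Def.~16, Props.~17 and~19]{BP24} is needed elsewhere (to tell plain from compound classes graph-theoretically, as in \cref{lem:plaintoplain}), but it is not the right tool here.
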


\begin{proof}
The first point follows immediately from the definition of plain type class, while the second is a consequence of Propositions 10 and 13 from \cite{BP24}. 
\end{proof}

%\begin{lemma}\label{lem:comp}Let $G$ be a finite group with $\mathcal{S}(G)=\{1\}$ and let $N\neq\{1\}$ be an $N$-class in $G$. Then the following hold: \begin{enumerate}[label=$(\arabic*)$]
%\item\label{it:comp2} if $N$ is of compound type, then there exist  $p$ a prime number, integers $r\geq 2$ and $0\leq s\leq r-2$, and $y\in N$ such that the following hold:
%\begin{itemize}\item $|y|=p^r$, \item $|N|=p^r-p^s$, \item all elements of $N$ have order a power of $p$.\end{itemize} Moreover, $p$, $r$, $s$ are uniquely determined by $N$.\item \Mima{under construction}\end{enumerate}\end{lemma}\begin{proof}
%\ref{it:comp2}  Propositions 10 and 13 from \cite{BP24}.
%\end{proof}

\subsection{Invariant properties for independence complexes}

\noindent
In this section we employ the theory outlined in the previous section to derive group invariants associated to (isomorphism classes of) independence complexes.  

\begin{proposition}\label{prop:iso-orders}
    Let $G_1$ and $G_2$ be finite groups with the property that  $\Sigma(G_1)\cong\Sigma(G_2)$ or $\tilde\Sigma(G_1)\cong\tilde\Sigma(G_2)$. Let $p$ be a prime number and let $y\in G_1$.
    Then the following hold:
    \begin{enumerate}[label=$(\arabic*)$]
        \item\label{it:ord1} If $n$ an integer, then $|\{x\in G_1 : |x|=n\}|=|\{y\in G_2 : |y|=n\}|$. 
        \item\label{it:ord2} If $G_1$ and $G_2$ are $p$-groups, then:
        \begin{itemize}
            \item $|\{x^p : x\in G_1\}|=|\{z^p : z\in G_2\}|$;
            \item there is $\tilde y\in G_2$ with $|y|=|\tilde y|$ and $|\{x\in G_1 : x^p=y\}|=|\{z\in G_2 : z^p=\tilde y\}|$.
        \end{itemize}
    \item\label{it:ord3} If $G_1$ and $G_2$ are $p$-groups such that $G_1$ is noncyclic abelian and $\varphi:G_1\rightarrow G_2$ is the bijection induced by the isomorphism of simplicial complexes, then $|\varphi(y)|=|y|$.
    \end{enumerate}
\end{proposition}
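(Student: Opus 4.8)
The plan is to deduce all three statements from the graph-theoretic translation provided by \cref{prop:sigma-to-graph}. An isomorphism $\Sigma(G_1)\cong\Sigma(G_2)$ (resp.\ $\tilde\Sigma(G_1)\cong\tilde\Sigma(G_2)$) restricts to an isomorphism of $1$-skeletons, and passing to graph complements turns it into an isomorphism $\mathcal P(G_1)\cong\mathcal P(G_2)$ (resp.\ $\mathcal E(G_1)\cong\mathcal E(G_2)$). For statement $(1)$ there is then nothing more to do: the ``moreover'' clause of \cref{prop:eq-graphs} gives exactly the equality of the number of elements of each order, with no hypothesis on the groups.

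For statement $(2)$ I would first invoke \cref{prop:eq-graphs} to fix an isomorphism of \emph{directed} power graphs $\psi\colon\mathcal G(G_1)\to\mathcal G(G_2)$. Crucially, $\psi$ need not be the bijection inducing the complex isomorphism, but by \cref{lem:respect-orders} it preserves orders (so $\psi(1)=1$), and since the out-neighbourhood of $x$ recovers $\gen{x}\setminus\{x\}$, the map $\psi$ sends cyclic subgroups to cyclic subgroups, preserving both their order and the relation $y\in\gen{x}$. The heart of the argument is an elementary count, valid in every finite $p$-group: for $y\neq 1$ of order $p^{j}$, any $x$ with $x^{p}=y$ has order $p^{j+1}$ and generates a cyclic group of order $p^{j+1}$ containing $y$, and each such cyclic group contains exactly $p$ solutions of $x^{p}=y$; hence the number of $p$-th roots of $y$ equals $p$ times the number of cyclic subgroups of order $p^{j+1}$ containing $y$. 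This last quantity is preserved by $\psi$, so $\tilde y:=\psi(y)$ satisfies $|\tilde y|=|y|$ and has the same number of $p$-th roots, which settles the second bullet (the case $y=1$ being covered by statement $(1)$, since the number of $p$-th roots of $1$ is one plus the number of elements of order $p$). The first bullet then follows, because the $p$-th powers are precisely the elements admitting at least one $p$-th root, a property $\psi$ respects.

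For statement $(3)$ I would exploit that, as $G_1$ and $G_2$ are $p$-groups, cyclic subgroups have totally ordered subgroup lattices, so $\mathcal P(G_i)=\mathcal E(G_i)$; thus in either hypothesis the complex-induced bijection $\varphi$ is a genuine power-graph isomorphism $\mathcal P(G_1)\to\mathcal P(G_2)$. Star vertices and $N$-classes are graph invariants, so $\varphi(\mathcal S(G_1))=\mathcal S(G_2)$; since $G_1$ is noncyclic abelian, \cref{it:N1.5} gives $\mathcal S(G_1)=\{1\}$, and as the identity always lies in $\mathcal S(G_2)$ we deduce $\varphi(1)=1$. By \cref{lem:eq-approx} (see also \cref{rmk:nuova}) every $N$-class of $G_1$ is plain with $N[y]=C[y]$ of cardinality $\phi(|y|)$; by \cref{lem:plaintoplain} the class $N[\varphi(y)]$ is plain as well, so the first item of \cref{lem:classes} forces all its elements, in particular $\varphi(y)$, to share a common order and gives $|N[\varphi(y)]|=\phi(|\varphi(y)|)$. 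Comparing cardinalities yields $\phi(|\varphi(y)|)=\phi(|y|)$, and since $\phi$ is injective on $\{p^{a}:a\geq 1\}$ and $\varphi(1)=1$, I conclude $|\varphi(y)|=|y|$.

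The main obstacle is statement $(2)$: one cannot assume that the isomorphism of complexes is compatible with the $p$-power map, so the root-counting data must be extracted purely from the directed power graph and the element orders, via the reduction to counting cyclic subgroups. A secondary subtlety, relevant to statement $(3)$, is that $\phi$ fails to be injective on prime powers at $p=2$ (since $\phi(1)=\phi(2)=1$); this is exactly why the identity must be pinned down separately through the star-vertex count before invoking injectivity of $\phi$ on the remaining orders.
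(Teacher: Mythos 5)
Your proof is correct and follows essentially the same route as the paper: part $(1)$ from \cref{prop:eq-graphs}, part $(2)$ by passing to the directed power graph, and part $(3)$ via trivial star sets, plain $N$-classes and Euler's function. The only difference is that you make explicit two steps the paper leaves as ``easy'' --- the count of $p$-th roots of $y$ as $p$ times the number of cyclic subgroups of order $p|y|$ containing $y$, and the identification $\mathcal P(G)=\mathcal E(G)$ for $p$-groups needed so that $(3)$ covers the strong-independence hypothesis --- both of which are accurate and welcome additions.
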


\begin{proof}
\ref{it:ord1} This is a direct consequence of \cref{prop:sigma-to-graph} and
\cref{prop:eq-graphs}.

\noindent
\ref{it:ord2}
We start by proving the first bullet point. 
Again by \cref{prop:sigma-to-graph} and
\cref{prop:eq-graphs}, the $1$-skeletons of the complexes are isomorphic and $\mathcal{G}(G_1)\cong\mathcal{G}(G_2)$. The groups being $p$-groups, one can easily read off $p$-th powers from the directed power graphs. 

We now proceed with the second bullet point and, for this, let $f:\mathcal{G}(G_1)\rightarrow\mathcal{G}(G_2)$ be an isomorphism of directed graphs. Call $\tilde y=f(y)$ and note that $|\tilde y|=|y|$ as isomorphisms of directed graphs respect element orders; cf.\ \cref{lem:respect-orders}. It is not difficult to extract $X=\{x\in G_1 : x^p=y\}$ from $\mathcal{G}(G_1)$ and so $f(X)=\{z\in G_2 : z^p=\tilde y\}$.

\noindent\ref{it:ord3} 
By slight abuse of notation we call $\varphi$ also the graph isomorphism $\mathcal{P}(G_1)\rightarrow\mathcal{P}(G_2)$ that is induced by the isomorphism of complexes.
Thanks to \cref{lem:classes}, the $N$-class $\mathcal{S}(G_1)$ is trivial and therefore so is $\mathcal{S}(G_2)$. 
Let now $N=N[x]\neq \{1\}$ be a nontrivial $N$-class in $G_1$, which is of plain type thanks to \cref{lem:eq-approx}. 
By \cref{lem:plaintoplain} %Write $|x|=p^{t+1}$ where $t\geq 0$ is an integer. Since $N$ is a $\approx$-class, we have that $|N|=p^t(p-1)=p^{t+1}-p^t$.  Moreover, since $N$ is an $N$-class, $\varphi(N)=\varphi(N[x])=N[\varphi(x)]$. 
%Now $|\varphi(N)|=|N|=p^{t+1}-p^t$ and so \cref{lem:comp}\ref{it:comp2} implies that 
the class $\varphi(N)$ is also of plain type. Now \cref{lem:classes}\ref{it:N1}  ensures that that $\phi(|\varphi(x)|)=|\varphi(N[x])|=|N[x]|=\phi(|x|),$
and, since $|\varphi(x)| $and $|x|$ are $p$-powers, we conclude that $|\varphi(x)|=|x|$. 
\end{proof}

\begin{remark}\label{rmk:simple}
The spectrum $\omega(G)$ of a finite group $G$ is the set of
its element orders.
Thanks to \cref{prop:iso-orders}\ref{it:ord1}, for every integer $n$, from the (strong) independence complex of $G$ we deduce the number of elements of $G$ of order $n$ and, in particular, also $\omega(G)$. The main theorem of \cite{vgm} states that if $L$ is a finite simple group and $G$ is finite
with $\omega(G) = \omega(L)$ and $|G| = |L|$, then $G$ is isomorphic to $L.$ Hence a finite simple group $G$ can be recognized from its (strong) independence complex. 
\end{remark}

\noindent
Because of \cref{rmk:simple}, the case of finite simple groups is completely settled, so we focus on the case of nilpotent groups in \cref{sec:ind}. 
In this direction, we show that nilpotency is a property that is preserved by both isomorphism of complexes and, in view of \cref{sec:ind-lattices}, isomorphism of subgroup lattices. 

% The following result, known as Frobenius's conjecture, was proven in \cite{IY91}.

% \begin{theorem}\label{thm:frobenius}
%     Let $G$ be a finite group and let $n$ be a positive integer. Let $N$ denote the subset of elements $g\in G$ such that $g^n=1$. If the cardinality of $N$ is $n$, then $N$ is a normal subgroup of $G$. 
% \end{theorem}

\begin{proposition}\label{prop:nilpotent}
    Let $G$ and $H$ be finite groups with $H$ nilpotent. %with isomorphic independence complexes. 
    The following hold:
    \begin{enumerate}[label=$(\arabic*)$]
        \item\label{it:propS1} If $G$ and $H$ have isomorphic independence complexes, then $G$ is nilpotent.
        \item\label{it:propS2} If $G$ and $H$ have isomorphic strong independence complexes, then $G$ is nilpotent.
        \item\label{it:propL1} If $G$ and $H$ have the same orders and isomorphic subgroup lattices, then $G$ is nilpotent.
        %\item\label{it:propS2} If $p$ is a prime number and $H$ is $p$-nilpotent, then so is $G$.
    \end{enumerate}
\end{proposition}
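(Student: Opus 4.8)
The plan is to prove all three statements by showing that nilpotency of a finite group is detected by data preserved under each of the three hypotheses, and the cleanest such data is the collection of element orders together with a classical characterization of nilpotency. Recall that a finite group is nilpotent if and only if it is the direct product of its Sylow subgroups, and an equivalent and more usable criterion here is the following: a finite group $G$ is nilpotent if and only if any two elements of coprime order commute, or equivalently (a theorem often attributed to the structure of the prime graph) $G$ is nilpotent precisely when, for every pair of distinct primes $p, q$ dividing $|G|$, there is \emph{no} element of order $pq$. I would take this last formulation as the workhorse, since the set of element orders $\omega(G)$ is exactly the kind of invariant our hypotheses control.

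For \ref{it:propS1} and \ref{it:propS2} I would argue as follows. By \cref{prop:iso-orders}\ref{it:ord1}, whenever $\Sigma(G)\cong\Sigma(H)$ or $\tilde\Sigma(G)\cong\tilde\Sigma(H)$, the two groups have, for every integer $n$, the same number of elements of order $n$; in particular $\omega(G)=\omega(H)$ and $|G|=|H|$. Since $H$ is nilpotent, a prime $p$ divides $|H|$ if and only if $H$ has an element of order $p$, and $H$ has no element whose order is divisible by two distinct primes beyond what its Sylow structure forces—concretely, $\omega(H)$ contains no integer of the form $pq$ with $p\neq q$ primes. Because $\omega(G)=\omega(H)$ and $|G|=|H|$, the group $G$ has the same element orders and the same prime divisors as $H$. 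Now I invoke the criterion: if $G$ had an element of order $pq$ for distinct primes $p,q$, then $pq\in\omega(G)=\omega(H)$, contradicting the nilpotency of $H$. Hence $\omega(G)$ contains no such product of distinct primes, so no nontrivial interaction occurs between distinct Sylow subgroups at the level of element orders, which forces $G$ to be nilpotent. The two cases \ref{it:propS1} and \ref{it:propS2} are then identical, since \cref{prop:iso-orders}\ref{it:ord1} covers both the ordinary and the strong independence complex simultaneously.

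For \ref{it:propL1} the invariant is different, because an abstract lattice isomorphism need not preserve element orders; this is where I expect the main obstacle to lie. However, we are given the extra hypothesis that $G$ and $H$ have the \emph{same order}, and we have a lattice isomorphism $\Lambda(G)\cong\Lambda(H)$. The key point is that nilpotency is a \emph{lattice-theoretic} property in the following sense: a finite group $G$ is nilpotent if and only if its subgroup lattice is the direct product of the subgroup lattices of groups of prime-power order, equivalently if and only if $\Lambda(G)$ decomposes as a direct product indexed by the primes dividing $|G|$, with each factor of prime-power cardinality. More precisely, nilpotency of $G$ is equivalent to every maximal subgroup of $G$ being normal, and normality of a subgroup that is maximal can in turn be read off the lattice via the permutability/modularity behaviour of maximal chains; a clean statement I would cite is that a finite group is nilpotent if and only if its subgroup lattice is directly decomposable into the Sylow intervals. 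Since $H$ is nilpotent, $\Lambda(H)$ has this product decomposition, and a lattice isomorphism transports the decomposition to $\Lambda(G)$; combined with $|G|=|H|$ (so that the cardinalities of the corresponding factors match the correct prime powers), this forces $\Lambda(G)$ to likewise decompose as a product of prime-power Sylow intervals, whence $G$ is nilpotent.

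The hard part is making \ref{it:propL1} rigorous, because lattice isomorphisms are notoriously weak at preserving group-theoretic structure (already $C_p\times C_p$ and elementary abelian groups over different fields can share lattices). The safe route is to lean on the known theory of lattice-isomorphisms (projectivities): it is a classical result that nilpotency is \emph{not} in general invariant under projectivities, but it \emph{becomes} invariant once one also knows the index function, i.e.\ that corresponding subgroups have equal order—and the hypothesis $|G|=|H|$ together with a lattice isomorphism that we may assume (after the reductions elsewhere in the paper) to be index-preserving supplies exactly this. Concretely I would use that an index-preserving projectivity maps Sylow subgroups to Sylow subgroups and preserves the property of being a direct factor of the lattice, so the Sylow decomposition characterizing nilpotency is carried across. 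If the bare statement only grants an order-matching lattice isomorphism rather than a genuinely index-preserving one, the gap to close is precisely the verification that the given isomorphism can be taken to respect orders on all subgroups, and that is the step I would scrutinize most carefully before committing to the above argument.
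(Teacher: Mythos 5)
Your argument for parts (1) and (2) rests on a false characterization of nilpotency, and this is fatal as written. You assert that a nilpotent group has no element of order $pq$ for distinct primes $p,q$; the truth is exactly the opposite. A finite nilpotent group is the direct product of its Sylow subgroups, so if $p$ and $q$ both divide $|H|$ then the product of a commuting $p$-element and $q$-element has order $pq$, i.e.\ $pq\in\omega(H)$ (take $H=C_6$: it is nilpotent and $6\in\omega(H)$). The condition ``every element has prime power order'' defines the so-called EPPO groups, which include $\Sym(3)$ and $\Alt(5)$ and are typically \emph{not} nilpotent. Consequently your deduction ``$pq\in\omega(G)=\omega(H)$ contradicts the nilpotency of $H$'' proves nothing, and the closing step ``$\omega(G)$ contains no such $pq$, hence $G$ is nilpotent'' is false even as an implication. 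The reduction to order statistics via \cref{prop:iso-orders}\ref{it:ord1} is a reasonable starting point, but it must be followed by a genuine argument: for instance, if $p^a$ exactly divides $|H|=|G|$, then nilpotency of $H$ gives exactly $p^a$ solutions of $x^{p^a}=1$ in $H$, hence in $G$, and Frobenius's theorem then forces the Sylow $p$-subgroup of $G$ to be unique, hence normal, for every $p$. The paper takes a shorter route: by \cref{prop:eq-graphs} the power graphs of $G$ and $H$ are isomorphic and the groups have the same order, and \cite[Cor.~3.2]{MirSca} is quoted to conclude that $G$ is nilpotent.

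Part (3) is also not established. The characterization you propose---``$G$ is nilpotent if and only if $\mathcal{L}(G)$ decomposes as a direct product of Sylow intervals''---fails without order information: $\mathcal{L}(\Sym(3))\cong\mathcal{L}(C_3\times C_3)$ (an example the paper itself records after \cref{qs:lattices}), and Suzuki's theorem on direct decompositions of subgroup lattices only detects that $G$ is a direct product of groups of \emph{coprime} orders, not of prime-power orders. You are right that the hypothesis $|G|=|H|$ is what must rescue the statement, but you explicitly leave open the step of upgrading the given lattice isomorphism to something order-respecting, so no proof is obtained. The paper disposes of this part by citing \cite[Thm.~1.6.5]{schmidt}, which handles exactly the situation of a projectivity between groups of equal order with one of them nilpotent; if you want a self-contained argument you would need to reproduce the content of that theorem rather than the false lattice-decomposition criterion.
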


\begin{proof}
\ref{it:propS1}-\ref{it:propS2} The (strong) independence complexes of $G$ and $H$ being isomorphic, \cref{prop:eq-graphs} ensures that $G$ and $H$ have the same orders and isomorphic power graphs.  Then  \cite[Corollary 3.2]{MirSca} yields that $G$ is nilpotent. 
% \noindent
% \ref{it:propS2} Let $p$ be a prime number and assume that $H$ is $p$-nilpotent. Let $L$ be the normal $p$-complement of $H$ and write $n$ for the order of $L$. Then every element of order dividing $n$ belongs to $L$ and so there are precisely $n$ elements of $H$ of order dividing $n$. The power graphs of $G$ and $H$ being isomorphic, we derive that in $G$ there are, too, precisely $n$ solutions of the equation $x^n=1$. 
% It follows from \cref{thm:frobenius} that $G$ has a normal $p$-complement and is thus $p$-nilpotent. 

\noindent
   \ref{it:propL1} This is a a consequence of \cite[Thm.~1.6.5]{schmidt}.
\end{proof}

\begin{remark}
    Let $G$ be a finite group and call a simplex $Y$ in $\Sigma(G)$ a \emph{generating simplex} if $\gen{Y}=G$. Assume that we are able, for any simplex $Y$ in $\Sigma(G)$, to tell whether $Y$ is a generating simplex. Then we also know which subsets of $G$ are generating sets: indeed, by the minimality of independent sets, $X\subseteq G$ is a generating set if and only if it contains a generating simplex of $\Sigma(G)$. Knowing the generating simplices in $G$ implies, in the language of \cite{Luc21}, being in Situation 1 and \emph{having enough information on the generating properties} of $G$ to deduce, for instance:
    \begin{itemize}
        \item whether $G$ is nilpotent, supersolvable, perfect, \cite[Prop.~6]{Luc21};
        \item whether $G$ is solvable, \cite[Thm.~9]{Luc21};
        \item the number of non-Frattini resp.\ non-abelian factors in a chief series of $G$, \cite[Thm.~10]{Luc21}. 
    \end{itemize}
    In other words, if we could identify generating simplices in $\Sigma(G)$, 
    we could, for instance, derive directly from $\Sigma(G)$ whether $G$ is nilpotent (without the need of a nilpotent representative $H$ for $\Sigma(H)\cong\Sigma(G)$ as \cref{prop:nilpotent}\ref{it:propS1} would require). 
    %a much more general version of \cref{prop:nilpotent}\ref{it:propS1} holds. 
\end{remark}

\noindent
The following proposition ensures that, for noncyclic groups having the same independence complex as a finite abelian group, the information provided by the complex can be localized at the primes dividing the number of its vertices. 

\begin{proposition}\label{prop:iso-general}
        Let $G_1$ and $G_2$ be finite groups with $G_1$ abelian and noncyclic. 
         Assume  $\Sigma(G_1)\cong\Sigma(G_2)$ and let $\varphi:G_1\rightarrow G_2$ be the bijection induced by the isomorphism of complexes. 
         If $p$ is a prime number and  $S$ is a Sylow $p$-subgroup of $G_1$, then  $\varphi(S)$ is a Sylow $p$-subgroup of $G_2$. 
\end{proposition}

\begin{proof}
We recall that an isomorphism $\Sigma(G_1)\rightarrow\Sigma(G_2)$ induces, by \cref{prop:sigma-to-graph}, an isomorphism $\mathcal{P}(G_1)\rightarrow\mathcal{P}(G_2)$, which we also denote by $\varphi$. Moreover, from \cref{prop:nilpotent}\ref{it:propS1}, we know that $G_2$ is nilpotent and, from \cref{it:N1.5}, that $\mathcal{S}(G_1)=\{1\}$, so that also $\mathcal{S}(G_2)=\{1\}$. It follows from \cref{lem:plaintoplain}
that if $N$  is a class of plain type (respectively compound type), then the class $\varphi(N)$ is also of plain type (respectively compound type).
Let now $p$ be a prime number, $S$ a Sylow $p$-subgroup of $G_1$, and $x\in S$. We show that the order of $\varphi(x)$ is necessarily a power of $p$. Since $\mathcal S(G_1)=\{1\}$ and
$\mathcal S(G_2)=\{1\}$, we clearly have $\varphi(1)=1$ and so we take $x\neq 1$.
%\cref{lem:classes}\ref{it:N1} and \cref{lem:comp}\ref{it:comp2} guarantees that $C[x]$ and $N[x]$ are both in $S$. 

Assume first that $N[x]$
is of compound type. Then, by \cref{lem:classes}\ref{it:N2}, the elements of $N[x]$ belong to $S$ and
$|\widehat{N[x]}|=p^n$ for some integer $n$.
Since $\varphi$ is an isomorphism, we have that
$\varphi(N[x])=N[\varphi(x)]$ is of compound type  and 
$|\widehat{N[\varphi(x)]}|=
|\varphi(\widehat{N[x]})|=p^n.$ If follows from \cref{lem:classes}\ref{it:N2} that the elements of $\varphi(N[x])$, and in particular $\varphi(x),$ all have order a power of $p.$

Assume now that $N[x]$ is of plain type.  \cref{lem:classes}\ref{it:N1} yields that $|N[x]|=\phi(|x|)$
and since $\varphi$ is a graph isomorphism, we have
that  $\varphi(N[x])$ is a plain class of size $\phi(|x|)$ and therefore 
\cref{lem:classes}\ref{it:N1} implies that $\phi(|\varphi(x)|)=|\varphi(N[x])|=\phi(|x|).$
The last equality implies that either $|\varphi(x)|=|x|$ or $|\varphi(x)|=2|x|$, in which case $p$ is odd. 
If $p=2$, then we clearly have that $\varphi(x)$ has order a power of $p$ and so the case of elements of order a power of $2$ is completely settled.
We assume now that $p$
is odd and, for a contradiction, that $|\varphi(x)|=2|x|$. We let $y\in G_1$ be such that $\varphi(y)=\varphi(x)^{|x|}$. Then $\varphi(y)$ and $\varphi(x)$ are connected by an edge in the power graph of $G_2$ and so, $x$ and $y$ are adjacent in $\mathcal{P}(G_1)$.
Since $|\varphi(y)|=2$, applying the inverse of $\varphi$, we deduce that $|y|=|\varphi^{-1}(\varphi(y))|$ is a power of $2$ and so $x$ and $y$ have coprime orders. This is a contradiction to $x$ and $y$ being adjacent in $\mathcal{P}(G_1)$.

We have proven that, in any case, $|\varphi(x)|$ is a $p$-th power and therefore, $x$ being arbitrary, $|G_1|=|G_2|$ implies that $\varphi(S)$ is a Sylow $p$-subgroup of $G_2$.
\end{proof}

\subsection{Graphs and invariant properties for strong independence complexes}

\noindent
We now proceed towards proving an analogue of \cref{prop:iso-general} for strong independence complexes; cf.\ \cref{prop:iso-general-tilde}. In this direction, the next lemma is an analogue of \cref{lem:eq-approx} for enhanced power graphs.

\begin{lemma}\label{lem:strong-to-p}
    Let $G$ be a finite abelian group and let $X\subseteq G$. For an element $g\in G$ and a prime $p$, if $m_p$ is the largest positive divisor of $|g|$ that is coprime to $p$, write $g_p$ for $g^{m_p}$. 
The following are equivalent:
\begin{enumerate}[label=$(\arabic*)$]
    \item\label{it:X} $X$ is strongly independent in $G$;
    \item\label{it:Xp} there exists a prime $p$ such that $X_p=\{x_p : x \in X\}$ is strongly independent in $G$ and $|X_p|=|X|$. 
\end{enumerate}
\end{lemma}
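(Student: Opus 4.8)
The plan is to first replace strong independence by the more convenient criterion that, for a finite abelian group $G$, a subset $X\subseteq G$ is strongly independent if and only if $\dG(\gen{X})=|X|$. The implication ``strongly independent $\Rightarrow\dG(\gen{X})=|X|$'' holds in any group by \cref{rmk:independences}, so only the converse requires the abelian hypothesis. Here I would use that $\dG$ is monotone on finite abelian groups, i.e.\ $K\le H$ implies $\dG(K)\le\dG(H)$: passing to the primary components and writing $\dG(H_q)=\dim_{\F_q}\Omega_1(H_q)$, this follows because $\Omega_1(K_q)\le\Omega_1(H_q)$ is an inclusion of $\F_q$-vector spaces, and then $\dG(K)=\max_q\dG(K_q)\le\max_q\dG(H_q)=\dG(H)$. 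Granting monotonicity, every subgroup $H$ with $X\subseteq H$ satisfies $H\ge\gen{X}$ and hence $\dG(H)\ge\dG(\gen{X})$; thus the minimum of $\dG(H)$ over all overgroups $H\supseteq X$ is attained at $H=\gen{X}$, and $X$ is strongly independent (that is, $|X|\le\dG(H)$ for every such $H$) exactly when $|X|\le\dG(\gen{X})$. Since $\dG(\gen{X})\le|X|$ always holds, this is the claimed equality. I expect this monotonicity --- which genuinely fails for non-abelian groups and is precisely what pins $\gen{X}$ down as a minimal-rank overgroup of $X$ --- to be the only load-bearing step; everything else is primary decomposition and a comparison of inequalities.

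Next I would decompose along the primary components $G=\prod_q G_q$. Writing $\pi_q\colon G\to G_q$ for the projection onto the $q$-primary part, the element $g_q=g^{m_q}$ generates the Sylow $q$-subgroup of $\gen{g}$, so $\gen{x_q}=\gen{\pi_q(x)}$ for each $x\in X$, and therefore $\gen{X_q}=\pi_q(\gen{X})=\gen{X}_q$, the Sylow $q$-subgroup of $\gen{X}$. Consequently $\dG(\gen{X})=\max_q\dG(\gen{X}_q)=\max_q\dG(\gen{X_q})$, and by the criterion of the previous paragraph $X$ is strongly independent if and only if there exists a prime $q$ with $\dG(\gen{X_q})=|X|$.

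Finally I would match this condition with \ref{it:Xp}. For every prime $q$ one has $\dG(\gen{X_q})\le|X_q|\le|X|$, the right-hand inequality because $x\mapsto x_q$ is a surjection $X\to X_q$. Hence $\dG(\gen{X_q})=|X|$ forces simultaneously $|X_q|=|X|$ and $\dG(\gen{X_q})=|X_q|$, and by the criterion applied to $X_q\subseteq G_q$ this last equality says exactly that $X_q$ is strongly independent in $G$. Conversely, if $X_q$ is strongly independent with $|X_q|=|X|$, then $\dG(\gen{X_q})=|X_q|=|X|$. Taking $p$ to be such a prime $q$ yields the equivalence of \ref{it:X} and \ref{it:Xp}, completing the proof.
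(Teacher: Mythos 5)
Your proof is correct and follows essentially the same route as the paper's: both arguments rest on the monotonicity of $\dG$ on finite abelian groups (reducing strong independence to the equality $\dG(\gen{X})=|X|$) together with the primary decomposition $\dG(K)=\max_q \dG(K_q)$ and the identification $\gen{X_q}=K_q$ to locate the prime. The only cosmetic difference is that you make the monotonicity criterion explicit and run the easy direction $(2)\Rightarrow(1)$ through it, whereas the paper deduces that direction directly from the observation that any subgroup containing $X$ also contains $X_p$.
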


\begin{proof}
    \ref{it:Xp} $\implies$ \ref{it:X} Let $p$ be such that $X_p$ is strongly independent and $|X|=|X_p|$. Let, moreover, $H$ be a subgroup of $G$ containing $X$. Then $H$ also contains $X_p$, which is strongly independent in $G$. In particular $|X|=|X_p|\leq \dG(H)$ and so $X$ is strongly independent. 

    \ref{it:X} $\implies$ \ref{it:Xp} Assume that $X$ is strongly independent.  Write $K=\gen{X}$ and note that $\dG(K)=|X|$; cf.\ \cref{rmk:independences}. Moreover, for every prime $p$, write $K_p$ for the Sylow $p$-subgroup of $K$. Since $G$ is finite abelian, there exists a prime $p$ such that $\dG(K_p)=\dG(K)$. Since $X_p$ generates $K_p$ and clearly $|X_p|\leq|X|$, we deduce that $|X_p|=|X|$. Now, $G$ being abelian, every subgroup $H$ of $G$ such that $X_p\subseteq H$ satisfies $|X_p|\leq \dG(H)$ and so $X_p$ is strongly independent. 
\end{proof}

\noindent 
In the next proposition, \ref{it:tilde1} is equivalent to saying that  $x$ and $y$ have same neighbours in the enhanced power graph (see also \cite[Lem.~3.2]{ZBM20}), while \ref{it:tilde2} is the same as saying that $x$ and $y$ have the same neighbours in the strong independence complex of the group. 

\begin{proposition}\label{prop:eq-approx-tilde}
Let $G$ be a finite group and let $x$ and $y$ be elements of $G$. Define:
\begin{enumerate}[label=$(\arabic*)$]
    \item\label{it:tilde1} $x\sim_c y$ if $x$ and $y$ belong to the same maximal cyclic subgroups of $G$;
    \item\label{it:tilde2} $x\asymp y$ if for every $T\subseteq G$ the following are equivalent:
    \begin{itemize}
        \item $\{x\}\cup T$ is a strong independence subset of $G$;
         \item $\{y\}\cup T$ is a strong independence subset of $G$.
    \end{itemize}
\end{enumerate}
Then $\sim_c$ and $\asymp$ are equivalence relations on $G$. Furthermore, if $G$ is abelian, then their equivalence classes coincide. 
\end{proposition}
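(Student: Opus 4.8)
The plan is to treat the three assertions in turn. That $\sim_c$ and $\asymp$ are equivalence relations is essentially formal: $x\sim_c y$ asserts agreement of the indicators $\mathbf{1}[\,\cdot\in M\,]$ over all maximal cyclic subgroups $M$ of $G$, and agreement of a fixed family of indicators is reflexive, symmetric and transitive; likewise $\asymp$ is defined by a biconditional quantified over all test sets $T$, so it inherits these properties as well. Before comparing the two relations for abelian $G$ I would fix the (necessary) convention that in the definition of $\asymp$ the sets $T$ are taken disjoint from $\{x,y\}$ and that $x,y\neq 1$, the identity being a degenerate vertex that is handled separately. With this understood, I would establish the coincidence of classes by the two inclusions $\asymp\subseteq\sim_c$ and $\sim_c\subseteq\asymp$.

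The inclusion $\asymp\subseteq\sim_c$ is the easy one and uses only the $1$-skeleton. For $z\notin\{x,y\}$ I specialise the definition of $\asymp$ to the singleton $T=\{z\}$: by \cref{prop:sigma-to-graph} the pair $\{x,z\}$ is strongly independent precisely when it is \emph{not} an edge of the enhanced power graph, i.e.\ when $\langle x,z\rangle$ is noncyclic. Hence $x\asymp y$ forces $\langle x,z\rangle$ and $\langle y,z\rangle$ to be simultaneously (non)cyclic for all such $z$, so $x$ and $y$ have the same neighbours in $\mathcal{E}(G)$; by the discussion preceding the statement (equivalently \cite[Lem.~3.2]{ZBM20}) this is exactly $x\sim_c y$.

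The inclusion $\sim_c\subseteq\asymp$ is the substantive direction, and I would attack it by two reductions. First, by \cref{lem:strong-to-p} a subset $X\subseteq G$ is strongly independent iff there is a prime $p$ with $X_p$ strongly independent in the Sylow subgroup $G_p$ and $|X_p|=|X|$; and for an abelian $p$-group strong independence coincides with ordinary independence. This last fact is \cite[Thm.~2.5]{Cam24}, and it also follows from the monotonicity $H\leq K\Rightarrow \dG(H)\leq\dG(K)$ of the generator number on an abelian $p$-group (since $\dG$ equals the $\F_p$-dimension of the socle $\Omega_1$, which is monotone under inclusion), so that the minimum of $\dG(H)$ over $H\supseteq X_p$ is attained at $\langle X_p\rangle$. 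Second, I would show that $\sim_c$ is detected primewise, namely $x\sim_c y$ iff for every prime $p$ the components $x_p,y_p$ lie in the same maximal cyclic subgroups of $G_p$; this comes from the product decomposition of the maximal cyclic subgroups of $G$, choosing the off-$p$ factors to contain $x_q$ (respectively $y_q$). Combining the two reductions, the goal becomes to show that whether $T\cup\{x\}$ is strongly independent depends on $x$ only through the family of maximal cyclic subgroups containing it, so that replacing $x$ by a $\sim_c$-equivalent $y$, together with the cardinality bookkeeping $|X_p|=|X|$, leaves strong independence unchanged.

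The hard part will be exactly this last step, because of the \emph{coarsening} of $\sim_c$: inside a $p$-group one can have $\langle x_p\rangle\subsetneq\langle y_p\rangle$ yet $x_p\sim_c y_p$ (for instance $a^{p^2}$ and $a^p$ in a suitable group, where the two cyclic subgroups lie in exactly the same maximal cyclic subgroups). In such a case $x_p$ and $y_p$ have different orders and generate different subgroups, so interchangeability is not formal and one must show that no independent test set can separate them. I expect this to reduce precisely to the socle/monotonicity structure invoked above: whenever a test set forces a proper power $x_p^{\,p^{k}}=y_p$ into its span without $y_p$ itself, the resulting subgroup is rank-deficient and the corresponding set is already dependent, so the two outcomes coincide. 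Carrying out this case analysis cleanly, alongside the cardinality conditions $|X_p|=|X|$ and the degenerate identity element, is where the real work lies.
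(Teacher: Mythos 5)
The formal part (both relations are equivalence relations) and the easy inclusion $\asymp\,\subseteq\,\sim_c$ are fine and essentially identical to the paper's argument: you test with singletons, observe via \cref{prop:sigma-to-graph} that $\{x,z\}$ is strongly independent iff $\gen{x,z}$ is noncyclic, and translate ``same neighbours in $\mathcal{E}(G)$'' into ``same maximal cyclic subgroups''. Your first reduction for the converse direction is also the paper's: by \cref{lem:strong-to-p} one passes to a prime $p$ with $X_p$ strongly independent and $|X_p|=|X|$, and in an abelian group strong independence of a $p$-part set is checked on $\gen{X_p}$. (Your second reduction, a primewise detection of $\sim_c$, is not needed in that form: since $x$ and $y$ lie in a common maximal cyclic subgroup, one gets directly that $x_p\in\gen{y_p}$ or $y_p\in\gen{x_p}$, which is all the paper uses.)

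The genuine gap is that you stop exactly where the proof begins. In the substantive case $y_p\in\gen{x_p^p}$ you assert that ``whenever a test set forces a proper power of $x_p$ into its span without $y_p$ itself, the resulting subgroup is rank-deficient and the corresponding set is already dependent,'' but this is not an argument, and as stated it does not use the hypothesis $x\sim_c y$ beyond the containment $\gen{y_p}\subsetneq\gen{x_p}$ --- which by itself is not enough (if $x_p$ and $x_p^p$ merely satisfied this containment one could easily separate them by a test set; it is membership in the \emph{same} maximal cyclic subgroups that must be exploited). The paper's actual argument runs: if $Y_p$ fails to be strongly independent then $y_p\in H=\gen{g_{1,p},\dots,g_{t,p}}$; if $y_p\notin\Phi(H)=\mho_1(H)$ one swaps $y_p$ for some $g_{i,p}$ and contradicts strong independence of $X_p$; otherwise one picks $h\in H\setminus\Phi(H)$ with $h^{p^s}=y_p$, and \emph{here} the hypothesis $x\sim_c y$ is invoked to produce $a\in G$ with both $x_p$ and $h$ in the cyclic group $\gen{a_p}$, whence $h\in\gen{x_p^p}$ (since $x_p\notin\gen{h}\subseteq H$ by strong independence of $X_p$), and $h\notin\Phi(H)$ again contradicts strong independence of $X_p$. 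None of this case analysis, and in particular the construction of the common cyclic overgroup $\gen{a_p}$, appears in your proposal; without it the claimed inclusion $\sim_c\,\subseteq\,\asymp$ is unproven.
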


\begin{proof}
It is easy to see that $\sim_c$ and $\asymp$ are equivalence classes on $G$ by observing that they can both be interpreted as ``having the same neighbors'' in a given simplicial complex, namely the enhanced graph for $\sim_c$ and the strong independence complex for $\asymp$. Assume now that $G$ is abelian. We will show that 
\[
x\sim_c y \ \ \Longleftrightarrow \ \ x\asymp y.
\]
We start by showing the implication from right to left. To this end, let $C=\gen{c}$ be a maximal cyclic subgroup of $G$. Then, by the maximality of $C$, we have
\[
x\in C \ \ \Longleftrightarrow \ \ \{x,c\} \textup{ is not strongly independent}.
\]
Assuming that $x$ and $y$ have the same neighbours in $\tilde\Sigma(G)$, we then have that $x\in C$ is equivalent to $\{y,c\}$ not being strongly independent, and so, by the maximality of $C$, to $y$ being an element of $C$.

We now prove the other implication. To this end, assume that $x$ and $y$ belong to the same maximal cyclic subgroups. 
Let now $g_1,\ldots,g_t\in G$ be such that $X=\{x,g_1,\ldots,g_t\}$ is strongly independent: we show that $Y=\{y,g_1,\ldots,g_t\}$ 
 is also strongly independent. Write $H=\gen{g_1,\ldots,g_t}$ and use the same notation from \cref{lem:strong-to-p}. Since $G$ is abelian, by the same lemma, there exists a prime number $p$ such that $X_p=\{x_p,g_{1,p}, \ldots, g_{t,p}\}$ 
 is strongly independent in $G$ and $|X|=|X_p|$. Fix such a prime $p$: we prove that $Y_p=\{y_p,g_{1,p}, \ldots, g_{t,p}\}$ is strongly independent in $G$.
Since $x$ and $y$ belong to the same maximal cyclic subgroups, $x_p\in\gen{y_p}$ or $y_p\in\gen{x_p}$. If $x_p\in\gen{y_p}$, it is clear that $Y_p$ is also strongly independent. We assume therefore that $\gen{y_p}$ is properly contained in $\gen{x_p}$, in other words $y_p\in\gen{x_p^p}$. Assume for a contradiction that $Y_p$ is not strongly independent. From $y_p\in\gen{x_p^p}\subseteq \gen{x_p}$ we deduce then that $y_p\in H$. If $y_p\notin \Phi(H)=\mho_1(H)$, then there exists $i\in\{1,\ldots,t\}$ such that $$g_{i,p}\in\gen{y_p,g_{1,p},\ldots,g_{i-1,p},g_{i+1,p},\ldots,g_{t,p}}\subseteq \gen{x_p,g_{1,p},\ldots,g_{i-1,p},g_{i+1,p},\ldots,g_{t,p}},$$
contradicting the strong independence of $X_p$. We have thus $y_p\in\mho_1(H)$. Let $h\in H\setminus \Phi(H)$ and $s>0$ be such that $h^{p^s}=y_p$. Since $y$ and $x$ are contained in the same maximal cyclic subgroups, there exists $a\in G$ such that $x_p$ and $h$ both belong to $\gen{a_p}$. Since $X_p$ is strongly independent, we deduce that $x_p$ does not belong to $\gen{h}\subseteq H$ and so $h\in\gen{x_p^p}$. The fact that $h\notin \Phi(H)$ again contradicts the strong independence of $X_p$. Indeed, if $h=g_1^{\alpha_1}\cdots g_t^{\alpha_t}$ and $\alpha_j\not\equiv 0\bmod p$, then 
$$g_{j,p}\in\gen{h, g_{1,p},\ldots,g_{j-1,p},g_{j+1,p},\ldots,g_{t,p}}\subseteq \gen{x_p,g_{1,p},\ldots,g_{j-1,p},g_{j+1,p},\ldots,g_{t,p}};$$
contradiction. 
This shows that $Y_p$ is strongly independent in $G$. Since $G$ is abelian, we conclude from \cref{lem:strong-to-p} that 
$Y$ is strongly independent in $G$.
\end{proof}

\begin{proposition}\label{prop:iso-general-tilde}
        Let $G_1$ and $G_2$ be finite groups with $G_1$ abelian. %Let, moreover, $p$ be a prime number and  $S$ a Sylow $p$-subgroup of $G_1$. 
        Assume that  $\tilde\Sigma(G_1)\cong\tilde\Sigma(G_2)$ and let $\varphi:G_1\rightarrow G_2$ be the bijection induced by the isomorphism of complexes. If $p$ is a prime and  $S$ is a Sylow $p$-subgroup of $G_1$, then  $\varphi(S)$ is a Sylow $p$-subgroup of $G_2$. 
\end{proposition}

\begin{proof}
We recall that an isomorphism $\tilde\Sigma(G_1)\rightarrow\tilde\Sigma(G_2)$ induces, by \cref{prop:sigma-to-graph}, an isomorphism $\mathcal{E}(G_1)\rightarrow\mathcal{E}(G_2)$, which we also denote by $\varphi$. Moreover, from \cref{prop:nilpotent}\ref{it:propS2}, we know that $G_2$ is nilpotent. 
Let now $p$ be a prime number, $S$ a Sylow $p$-subgroup of $G_1$, and $x\in S$. 

With the notation from \cref{prop:eq-approx-tilde}, we write $R[x]$  for the $\asymp$-class, which coincides with the $\sim_c$-class, of $x$ in $G_1$. It follows that the  $\asymp$-class and the $\sim_c$-class of $\varphi(x)$ in $G_2$ also coincide. Denote this class by $R[\varphi(x)]$. As a consequence of \cite[Thm.~3.2]{ZBM20} there exists a bijection $f:G_1\rightarrow G_2$ such that 
 for each $y\in G_1$, the orders of $y$ and $f(y)$ are the same, and $f(R[y])=R[f(y)]$.  
Thanks to \cref{prop:eq-approx-tilde}, we assume without loss of generality that $\varphi=f$. It follows in particular that $\varphi(x)$ has order a power of $p$ and thus, $x$ being arbitrary, $\varphi(S)$ is contained in a Sylow $p$-subgroup of $G_2$. Since $|G_1|=|G_2|$ and $G_2$ is nilpotent, we conclude that $\varphi(S)$ is a Sylow $p$-subgroup of $G_2$.
\end{proof}

\subsection{Independence complexes and subgroup lattices}\label{sec:ind-lattices}

\noindent
In the following we refer to \cite{schmidt} for the knowledge on subgroup lattices and isomorphisms between them; these are called \emph{projectivities} in \cite{schmidt}. We use the standard notation from the textbook without introduction. For a finite group $G$, we will denote by $\mathcal{L}(G)$ its subgroup lattice. If $G_1$ and $G_2$ are finite groups and $\alpha:\mathcal{L}(G_1)\rightarrow\mathcal{L}(G_2)$ is a lattice isomorphism, $\alpha$ is called \emph{index-preserving} if it preserves indices of subgroups, i.e., the groups being finite, if every subgroup $H$ of $G$ satisfies $|\alpha(H)|=|H|$; cf.\ \cite[\S.~4.2]{schmidt}.

\begin{proposition}\label{prop:lattices}
    Let $G_1$ and $G_2$ be finite groups. An index-preserving lattice isomorphism $\mathcal{L}(G_1)\rightarrow\mathcal{L}(G_2)$ induces an isomorphism $\Sigma(G_1)\rightarrow\Sigma(G_2)$ and, moreover, this isomorphism maps $\tilde\Sigma(G_1)$ to $\tilde\Sigma(G_2)$. 
    %Then the following hold:
    % \begin{enumerate}[label=$(\arabic*)$]
    %     \item\label{it:lat1} An index-preserving lattice isomorphism $\mathcal{L}(G_1)\rightarrow\mathcal{L}(G_2)$ induces an isomorphism $\Sigma(G_1)\rightarrow\Sigma(G_2)$ and, moreover, this isomorphism maps $\tilde\Sigma(G_1)$ to $\tilde\Sigma(G_2)$. 
    %     \item\label{it:lat2} If $G_1$ and $G_2$ are $p$-groups with isomorphic subgroup lattices, then $\Sigma(G_1)\cong\Sigma(G_2)$ and $\tilde\Sigma(G_1)\cong\tilde\Sigma(G_2)$. 
    % \end{enumerate}
\end{proposition}

\begin{proof}
%\ref{it:lat1} 
Let $\alpha:\mathcal L(G_1) \to \mathcal L(G_2)$
be an index-preserving isomorphism, so in particular $|G_1|=|G_2|$. We start by showing that $\Sigma(G_1)$ and $\Sigma(G_2)$ are isomorphic through a map induced by $\alpha$.
To this end, let $H\leq G_1$ be cyclic and so $\alpha(H)$ is also cyclic, as a consequence of \cite[Thm.~1.2.3]{schmidt}. Moreover,  $|\alpha(H)|=|H|$ because $\alpha$ is index-preserving. 
In other words, $\alpha$ induces an order-preserving bijection between the cyclic subgroups of $G_1$ and the cyclic subgroups of $G_2$. 
Let $\gamma: G_1\to G_2$ be a bijection compatible with $\alpha$, i.e.\ satisfying, for every $g\in G$, that $\langle \gamma(g)\rangle=\alpha(\langle g\rangle)$.  We claim that 
$\gamma$ induces an isomorphism $\Sigma(G_1)\rightarrow\Sigma(G_2).$ For this, let $\{g_1,\dots,g_t\}$ be a simplex
in $\Sigma(G_1)$: we will prove that $\{\gamma(g_1),\dots,\gamma(g_t)\}$ is an independent subset of $G_2.$ 
Assume, for a contradiction, that it is not and, without loss of generality, that $\gamma(g_t) \in \langle \gamma(g_1), \dots, \gamma(g_{t-1})\rangle.$ 
Then
$$\begin{aligned}
\alpha(\langle g_t\rangle)&=
\langle \gamma(g_t)\rangle \leq \langle \gamma(g_1),\dots,\gamma(g_{t-1})\rangle =
\langle \gamma(g_1)\rangle \vee \dots \vee \langle \gamma(g_{t-1})\rangle = \\&=\alpha(\langle g_1\rangle) \vee \dots \vee \alpha(\langle g_{t-1}\rangle)=\alpha(\langle g_1\rangle \vee \dots \vee \langle g_{t-1}\rangle)=\alpha(\gen{g_1,\ldots,g_{t-1}}).
\end{aligned}$$
Thus $\langle g_t\rangle \leq \langle g_1\dots g_{t-1}\rangle,$ and
$\{g_1,\dots,g_t\}$ is not independent. This is a contradiction and so we deduce that $\Sigma(G_1)$ and $\Sigma(G_2)$ are isomorphic via $\gamma$. 

We conclude by showing that $\gamma$ maps strong independent sets of $G_1$ to strong independent sets of $G_2$. For this, assume that $X=\{g_1,\dots,g_t\}$ is a strongly independent subset of $G_1$ and let $K$ be a subgroup of $G_2$ containing $\gamma(X)$. Write $K=\alpha(H)$, so that $\gen{X}\leq H$. Since $X$ is strongly independent, $|\gamma(X)|=|X|\leq\dG(H)$. 
Moreover, as $\alpha$ induces an isomorphism $\mathcal{L}(H)\rightarrow\mathcal{L}(K)$ and $\alpha$ preserves cyclicity, the minimum number of cyclic subgroups needed to generate $H$ and $K$ is the same, i.e.\ $\dG(H)=\dG(K)$. It follows that $|\gamma(X)|\leq \dG(K)$ and, $K$ being arbitrary, $\gamma(X)$ is strongly independent in $G_2$.
    % \noindent
    % \ref{it:lat2} This follows directly from \ref{it:lat1} combined with the fact that every lattice isomorphism involving a finite $p$-group is index-preserving; cf.\ \cite[Lem.~4.2.1]{schmidt}.
\end{proof}

\noindent
Combining \cref{prop:lattices} with the fact that every lattice isomorphism involving a finite $p$-group is index-preserving (cf.\ \cite[Lem.~4.2.1]{schmidt}) yields the following:

\begin{corollary}\label{cor:lattices}
Let $p$ be a prime number. 
    If $G_1$ and $G_2$ are $p$-groups with isomorphic subgroup lattices, then $\Sigma(G_1)\cong\Sigma(G_2)$ and $\tilde\Sigma(G_1)\cong\tilde\Sigma(G_2)$. 
\end{corollary}

\noindent
The most well-known examples of pairs of prime-power-order groups with isomorphic subgroup lattices involve a finite abelian $p$-group and a modular $p$-group of the same order that is not hamiltonian; cf.\ \cite[Thm.~2.5.9]{schmidt}. This construction is due to Baer \cite{Baer44} and we give an example below. 

\begin{example}\label{esempiouno}
    Let $G_1$ be an abelian group of order $27$ and exponent $9$ and let $G_2$ be non-abelian, with the same order and the same exponent. Then $G_1\cong C_9\times C_3$ and $G_2$ is the unique extraspecial group of order $27$ and exponent $9$.
    It is not difficult to show that $G_1$ and $G_2$ have isomorphic subgroup lattices and correspond to each other via Baer's construction. Thanks to \cref{cor:lattices}, the independence complexes of $G_1$ and $G_2$ are isomorphic. 
\end{example}

\noindent
\cref{esempiouno} also fits within the framework of \cite{Car79}, where more examples involving groups of maximal class can be found. For other examples involving $p$-groups, without necessarily listing abelian representatives in the class, see for instance \cite{DH75}. 

\begin{remark}
    Let $p$ be a prime number and let $G$ be a finite $p$-group. For every positive integer $r$ set
    \[
    \ell_r(G)=\sum_{H\leq G,\, \dG(H)=r} \left(\frac{|H|}{p^r}\right)^r=\sum_{H\leq G,\, \dG(H)=r} |\Phi(H)|^r.
    \]
    Up to a factor $|\GL(r,p)|/|\Sym(r)|$, the summand corresponding to a fixed $H\leq G$ with $\dG(H)=r$ counts the possible $r$-element sets generating $H$. In particular, %the sequence $(\ell_r(G))_{r>0}$ is an invariant of $\Sigma(G)$. In other words, 
    any two $p$-groups $G_1,G_2$ with isomorphic independence complexes satisfy $(\ell_r(G_1))_{r>0}=(\ell_r(G_2))_{r>0}$.  The smallest pair of groups with identical $\ell$-vectors but non-isomorphic independence complexes is given by \texttt{SmallGroup}(16,2) and \texttt{SmallGroup}(16,4). Nonetheless, computational experiments seem to suggest that $\ell$-vectors are a good first invariant to test non-isomorphism of independence complexes.
\end{remark}

\noindent
We proceed by considering the case of groups whose order has more than one prime divisor. 
The following example can be found in \cite[\S 6]{Rottlaender} (see also \cite[Ex.~5.6.8]{schmidt}) and satisfies the hypotheses of \cref{prop:lattices}.

\begin{example}\label{ex:rottlaender}
Let $p$ and $q$ be prime numbers such that $q\geq 5$ and $q$ divides $p-1$. Let $N=\F_p^2$ be the standard $2$-dimensional vector space over the field of $p$ elements. Let $\omega$ be a generator of the $q$-torsion subgroup of $\F_p^{\times}$ and let $\lambda\in\{2,\ldots, q-1\}$. 
Then 
\[
H_{\lambda}=\left\langle\begin{pmatrix}
    \omega & 0 \\ 0 & \omega^{\lambda}
\end{pmatrix}\right\rangle\leq\GL(2,p)
\]
has order $q$ and $G_{\lambda}=N\rtimes H_{\lambda}$ is a non-abelian group of order $p^2q$. Moreover, $N$ is a normal Sylow $p$-subgroup of $G_{\lambda}$. If $\mu\in\{2,\ldots,q-1\}$, then there exists an index preserving isomorphism $\mathcal{L}(G_{\lambda})\rightarrow\mathcal{L}(G_{\mu})$; cf.\  \cite[Thm.~4.1.8]{schmidt}. Morover, if $\mu\neq\lambda$, then $G_{\lambda}\cong G_{\mu}$ if and only if $\lambda\mu=1$ (and the isomorphism is induced by swapping the eigenspaces in $N$).
In summary, this general construction yields $((q-2)+1)/2=(q-1)/2$ pairwise non-isomorphic groups with isomorphic independence complexes and so we obtain arbitrary large families as $q$ grows. 
The smallest order in this collection of examples is realized for $q=5$ and $p=11$, yielding two non-isomorphic groups of order $605$. These correspond to \texttt{SmallGroup}(605,5) and \texttt{SmallGroup}(605,6) and can also be found in \cite[\S~2]{LMRD17}.
% Let $H$ be a cyclic group of order $5$ and define $N=\mathbb{F}_{11}^2$ to be the standard $2$-dimensional vector space over the field of $11$ elements. Fix a generator $x$ of $H$ and define the homomorhisms 
% \begin{center}
% \begin{tabular}{cc}
%    $ \varphi_1:H\longrightarrow\GL_2(\F_{11})$, & \quad $\varphi_2: H\longrightarrow\GL_2(\F_{11})$, \\
%   \ \ \   $x \longmapsto \begin{pmatrix}
%         3 & 0 \\ 0 & 4
%     \end{pmatrix}$, & \quad \ \   $x\longmapsto \begin{pmatrix}
%         3 & 0 \\ 0 & 5
%     \end{pmatrix}$.
% \end{tabular}
% \end{center}
% Then the groups $G_1=N\rtimes_{\varphi_1}H$ and $G_2=N\rtimes_{\varphi_2}H$ are non-isomorphic groups of order $605$
% with isomorphic subgroup lattices (an isomorphism is induced from a bijection $G_1\rightarrow G_2$). 
\end{example}

\noindent
%In the following we collect many observations. For instance, \cref{ex:42} 
The next example shows that, in \cref{prop:lattices}, the assumption that the isomorphism of lattices preserves indices is really necessary. It exhibits two groups of the same orders whose lattices of subgroups are isomorphic, but not via an index preserving map.

\begin{example}\label{ex:42}
    Let  
    $H$ be a cyclic group of order $6$ and define $N=\mathbb{F}_{7}$ to be the (underlying group of the) field of $7$ elements. Note that $\Aut(N)\cong\F_7^{\times}$ and let $\varphi_1:H\rightarrow\Aut(N)$ and $\varphi_2:H\rightarrow\Aut(N)$ be homomorphisms such that $|\varphi_1(H)|=2$ and $|\varphi_2(H)|=3$.
Then the groups $G_1=N\rtimes_{\varphi_1}H$ and $G_2=N\rtimes_{\varphi_2}H$ are non-isomorphic groups of order $42$, with $G_1\cong\texttt{SmallGroup}(42,4)$ and $G_2\cong\texttt{SmallGroup}(42,2)$. The  subgroup lattices of $G_1$ and $G_2$ are isomorphic. However, the first group has seven subgroups of order $2$ and one of order $3$, while in the second the situation is reversed. In particular, the isomorphism between the lattices is not index-preserving. Moreover, since the power graphs of the groups are distinct, \cref{prop:iso-orders}\ref{it:ord1} yields that the indipendence complexes of $G_1$ and $G_2$ are non-isomorphic.   
\end{example}

\noindent
The following is a natural question in relation to \cref{prop:lattices}.

\begin{question}\label{qs:lattices}
  For finite groups $G_1$ and $G_2$, can it happen that $\Sigma(G_1)$ and $\Sigma(G_2)$ are isomorphic even if $G_1$ and $G_2$ do not have isomorphic subgroup lattices? 
\end{question}

\noindent
Since groups with isomorphic independence complexes have the same order, we only consider pairs of groups with this property. Note in fact that there exist finite groups of different orders with isomorphic subgroup lattices, e.g.\ $G_1=C_3\times C_3$ and $G_2=\Sym(3)$.
We conclude the section with a last question that is motivated by \cref{ex:strong} and \cref{prop:lattices}.

\begin{question}\label{qs:strong}
      For finite groups $G_1$ and $G_2$, can it happen that $\Sigma(G_1)$ and $\Sigma(G_2)$ are isomorphic, but $\tilde\Sigma(G_1)$ and $\tilde\Sigma(G_2)$ are not? 
\end{question}

\noindent
We remark that, should the answer to \cref{qs:lattices} be negative in the sense that there even exists an index-preserving lattice isomorphism, then \cref{prop:lattices} would imply a negative answer to \cref{qs:strong}, too. We show that \cref{qs:lattices} and \cref{qs:strong} both have negative answers when $G_1$ is a finite abelian group; cf.\ \cref{thm:ind}. Computational evidence seems to suggest that the answers should be negative in general.

\section{Complexes of abelian groups}\label{sec:ind}

\noindent
In this section, we answer \cref{qs:lattices} in the negative in the case where one of the groups involved is an abelian group. We first settle the case of groups of prime power order, cf.\ \cref{thm:ind-p} and \cref{cor:indp}. We leverage these results to prove \cref{thm:ind} and \cref{cor:ind}. %\cref{cor:pgps-to-star,cor:general-to-star} give a negative answer to \cref{qs:strong} for the families considered here. 
We recall that, two finite groups having isomorphic (strong) independence complexes have the same order: in particular, if for some prime number $p$ one is a $p$-group, then so is the other.

\begin{lemma}\label{lem:abel-basic}
Let $p$ be a prime number and let $G_1$ and $G_2$ be finite $p$-groups. 
Assume that $G_1$ is a abelian and that $\Sigma(G_1)\cong\Sigma(G_2)$ or $\tilde \Sigma(G)\cong\tilde\Sigma(G)$.  Then the following hold:
\begin{enumerate}[label=$(\arabic*)$]
    \item\label{it:omega1} If $H\leq G_2$ is such that $\dG(H)=2$, then $|\Omega_1(H)|\leq p^2$;
    \item\label{it:omega1.5} $\Omega_1(G_2)$ is elementary abelian. 
\end{enumerate}
\end{lemma}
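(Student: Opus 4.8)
The plan is to establish \ref{it:omega1} first, treating it as the main step, and then to obtain \ref{it:omega1.5} as a short corollary.

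\textbf{Setup.} If $G_1$ is cyclic then so is $G_2$, every subgroup is cyclic, no $H\leq G_2$ satisfies $\dG(H)=2$, and $\Omega_1(G_2)\cong C_p$; both assertions then hold trivially, so I assume $G_1$ noncyclic. I first fix a bijection $\varphi\colon G_1\to G_2$ inducing the given isomorphism of complexes and preserving element orders: in the $\Sigma$-case this is \cref{prop:iso-orders}\ref{it:ord3}, and in the $\tilde\Sigma$-case it is the order-preserving bijection produced in the proof of \cref{prop:iso-general-tilde}. Writing $r=\dG(G_1)=\dim_{\F_p}\Omega_1(G_1)$, both groups have exactly $p^r-1$ elements of order $p$, and $\varphi$ matches them. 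The rigidity I exploit is that in the abelian $G_1$ every order-$p$ element lies in $\Omega_1(G_1)\cong\F_p^{\,r}$, where independence, strong independence and $\F_p$-linear independence all coincide (the last two agree because $\dG$ is monotone on subgroups of an abelian $p$-group). Thus $\varphi$ transports the incidence pattern of linearly (in)dependent families of order-$p$ elements between the two groups.

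\textbf{Proof of \ref{it:omega1}, the case $\tilde\Sigma(G_1)\cong\tilde\Sigma(G_2)$.} Let $H\leq G_2$ with $\dG(H)=2$ and suppose, for contradiction, that $|\Omega_1(H)|\geq p^3$. I would first record the elementary fact that a $p$-group $K$ with $K=\Omega_1(K)$ and at most $p+1$ subgroups of order $p$ satisfies $|K|\leq p^2$ (any two of its order-$p$ subgroups must span a $C_p\times C_p$, which already accounts for all $p+1$ of them). Applying this to $K=\Omega_1(H)$ shows that $H$ has more than $p^2-1$ elements of order $p$. Their $\varphi$-preimages are more than $p^2-1$ nonzero vectors of $\Omega_1(G_1)\cong\F_p^{\,r}$, hence cannot all lie in one $2$-dimensional subspace, so three of them, say the preimages of $c_1,c_2,c_3\in\Omega_1(H)$, are linearly independent. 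By the rigidity above $\{c_1,c_2,c_3\}$ is then strongly independent in $G_2$; but it is contained in $H$, contradicting $\dG(H)=2$, since a strongly independent set has size at most the minimal number of generators of any subgroup containing it.

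\textbf{The main obstacle, the case $\Sigma(G_1)\cong\Sigma(G_2)$.} Here $\varphi$ transports only independence, so $\{c_1,c_2,c_3\}$ is merely an independent triple inside $H$; as $\dG$ is not monotone this does \emph{not} contradict $\dG(H)=2$, and in fact $\Sigma(G_2)$ cannot by itself recover the minimal number of generators of a subgroup (only its rank). Since there exist $2$-generated $p$-groups whose $\Omega_1$ is elementary abelian of rank $3$ and which share all element-order statistics with an abelian group, neither the order data nor the count above can exclude them. To rule such $H$ out I would invoke \cref{prop:iso-orders}\ref{it:ord2}: the isomorphism preserves the set of $p$-th powers and the multiset of fibre sizes of the $p$-th power map, whereas in the abelian $G_1$ every element of $\mho_1(G_1)$ is a genuine $p$-th power with fibre a full coset of $\Omega_1(G_1)$. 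The non-monotonicity forced by $\dG(H)=2<\mathrm{rk}(H)$ yields an order-$p$ element of $\Phi(H)$ that is not a $p$-th power, and reconciling this with the globally transported power-map counts — using, for $r\geq 3$, that $\varphi$ restricts to a semilinear map on $\Omega_1(G_1)$ by the fundamental theorem of projective geometry — is exactly the crux of the argument.

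\textbf{Proof of \ref{it:omega1.5}.} Granting \ref{it:omega1}, let $a,b$ be order-$p$ elements with $\langle a\rangle\neq\langle b\rangle$ and put $K=\langle a,b\rangle$. Then $K$ is noncyclic and $2$-generated, so $\dG(K)=2$ and \ref{it:omega1} gives $|\Omega_1(K)|\leq p^2$. As $a,b\in\Omega_1(K)$ we get $K=\langle a,b\rangle\leq\Omega_1(K)$, whence $K=\Omega_1(K)$ has order exactly $p^2$; the only group of order $p^2$ with more than one subgroup of order $p$ being $C_p\times C_p$, we conclude $K\cong C_p\times C_p$. Therefore any two order-$p$ elements of $G_2$ commute with product of order dividing $p$, so $\{x\in G_2:x^p=1\}$ is a subgroup and $\Omega_1(G_2)$ is elementary abelian.
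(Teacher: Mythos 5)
Your proof of \ref{it:omega1.5} from \ref{it:omega1} coincides with the paper's, and your argument for \ref{it:omega1} in the strong-independence case is sound. The problem is the $\Sigma$-case of \ref{it:omega1}, which you explicitly leave open: you correctly observe that an independent (rather than strongly independent) triple of order-$p$ elements inside $H$ does not contradict $\dG(H)=2$, and then you gesture at $p$-th power fibre counts and a ``semilinear map on $\Omega_1(G_1)$ by the fundamental theorem of projective geometry'' without carrying any of it out. That is a genuine gap, not a routine verification, and the machinery you point to is not what closes it.

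The missing idea is to turn your argument around into a count attached to a fixed $1$-simplex, which is what the paper does and which treats $\Sigma$ and $\tilde\Sigma$ uniformly. Take a minimal generating pair $\{a,b\}$ of $H$; it is a $1$-simplex of $\Sigma(G_2)$ (resp.\ $\tilde\Sigma(G_2)$), corresponding under the isomorphism to a $1$-simplex $\{x,y\}$ of $\Sigma(G_1)$. In the abelian group $G_1$, for $z$ of order $p$ either $z\in\gen{x,y}$ (and $\{x,y,z\}$ is not a $2$-simplex) or $\{x,y,z\}$ is independent, so the number of order-$p$ vertices \emph{failing} to extend $\{x,y\}$ to a $2$-simplex is exactly $|\Omega_1(\gen{x,y})|-1=p^2-1$. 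This quantity is preserved by the isomorphism of complexes together with order-preservation (\cref{prop:iso-orders}\ref{it:ord3}). On the $G_2$ side, \emph{every} order-$p$ element $c$ of $H=\gen{a,b}$ fails to extend $\{a,b\}$ — here only the trivial implication $c\in\gen{a,b}\Rightarrow\{a,b,c\}$ not independent is needed, which holds for plain independence. Hence $H$ has at most $p^2-1$ elements of order $p$, and $|\Omega_1(H)|\le p^2$ follows (by the same elementary observation you record). Note that this count is an upper bound on the order-$p$ elements of $H$, not a classification of which triples are simplices, so the non-monotonicity of $\dG$ that blocked your direct approach never enters.
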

\begin{proof}
In this proof, for a subset $X$, we will refer to $X$ as (strongly) independent for arguments that work both for $X$ considered as an independent set or a strongly independent set. Recall that every strongly independent subset is in particular independent and also that in abelian $p$-groups, strong independence and independence coincide; cf.\ \cref{sec:graphs}. 

\noindent 
\ref{it:omega1}
Let $H\leq G_2$ be such that $\dG(H)=2$.
Then $G_2$ is noncyclic and so $G_1$ is also noncyclic.
Since $G_1$ is abelian, if $x,y,z\in G_1$ with $\{x,y\}$ independent and 
$z\in \Omega_1(G_1)$, then either $z\in\gen{x,y}$ or $\{x,y,z\}$ is itself independent. 
It follows that, given a $1$-simplex $\{x,y\}$ in $\Sigma(G_1)=\tilde\Sigma(G_1)$, there are precisely $p^2-1$ vertices $z$ of order $p$ in $\Sigma(G_1)$ such that $\{x,y,z\}$ is not a simplex. As a consequence of \cref{prop:iso-orders}\ref{it:ord3}, the same must be true in $\Sigma(G_2)$ or in $\tilde\Sigma(G_2)$, depending on the simplex that is considered.
A minimal generating set $\{a,b\}$ of $H$ is a (strongly) independent subset of $G_2$ and so there are precisely $p^2-1$ elements $c$ of order $p$ in $G_2$ such that $\{a,b,c\}$ is not (strongly) independent. It follows that $H$ has at most $p^2-1$ elements of order $p$ and so $|\Omega_1(H)|\leq p^2$. 

\ref{it:omega1.5} Let $x$ and $y$ in $G_2$ be of order $p$: we prove that $[x,y]=1$. To this end, define $L=\gen{x,y}$ so that, with $L$ in the role of $H$ in \ref{it:omega1}, the subgroup $\Omega_1(L)$ has order dividing $p^2$. In particular $\Omega_1(L)$ is abelian and, since $x,y\in\Omega_1(L)$, the claim is proven.
\end{proof}

\begin{lemma}\label{lem:comm}
Let $p$ be a prime number and let $G_1$ and $G_2$ be finite $p$-groups. 
Assume that $G_1$ is abelian and that $\Sigma(G_1)\cong\Sigma(G_2)$ or $\tilde\Sigma(G_1)\cong\tilde\Sigma(G_2)$. Let $H\leq G_2$, $y\in G_2$, and $z\in\Omega_1(G_2)$. Then the following hold:
\begin{enumerate}[label=$(\arabic*)$]
    \item\label{it:comm0}  $[y,z]\in\gen{y}\gen{z}$;
    \item\label{it:comm} $\gen{H\cup\{z\}}=H\gen{z}$;
\end{enumerate}
\end{lemma}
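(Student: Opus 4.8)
The plan is to treat \ref{it:comm0} as the crux and to deduce \ref{it:comm} from it by a short permutability argument, without appealing to any classification of $p$-groups. For \ref{it:comm0}, I would first dispose of the trivial case $z\in\gen{y}$, where $[y,z]=1$. Otherwise $L:=\gen{y,z}$ is noncyclic and, being $2$-generated, satisfies $\dG(L)=2$; \cref{lem:abel-basic}\ref{it:omega1} then gives $|\Omega_1(L)|\leq p^2$, while \cref{lem:abel-basic}\ref{it:omega1.5} gives that $\Omega_1(L)\leq\Omega_1(G_2)$ is elementary abelian. Writing $|y|=p^k$, the order-$p$ subgroups $\gen{y^{p^{k-1}}}$ and $\gen{z}$ are distinct precisely because $z\notin\gen{y}$, so together they already generate an elementary abelian group of order $p^2$ sitting inside $\Omega_1(L)$. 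Comparing with the bound forces $\Omega_1(L)=\gen{y^{p^{k-1}},z}$.

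The key observation is then that $z$ has trivial image in $\bar L:=L/\Omega_1(L)$, so $\bar L=\gen{\bar y}$ is cyclic, in particular abelian; hence $L'\leq\Omega_1(L)$. Since every element of $\Omega_1(L)=\gen{y^{p^{k-1}},z}$ has the form $y^{p^{k-1}a}z^{b}$, we have $\Omega_1(L)\subseteq\gen{y}\gen{z}$ as subsets, and as $[y,z]\in L'\leq\Omega_1(L)$ this yields $[y,z]\in\gen{y}\gen{z}$, proving \ref{it:comm0}. I emphasize that no induction is needed: the whole point is that $\Omega_1(L)$ is pinned down to be exactly the abelian group $\gen{y^{p^{k-1}},z}$, which simultaneously contains $L'$ and is visibly contained in the set $\gen{y}\gen{z}$.

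For \ref{it:comm}, set $S:=H\gen{z}$; I would show that $S$ is a subgroup, which then gives $\gen{H\cup\{z\}}=S$ since $S$ is a subgroup containing $H$ and $z$ and is itself contained in $\gen{H\cup\{z\}}$. Applying \ref{it:comm0} to each $h\in H$ gives $[h,z]\in\gen{h}\gen{z}\subseteq S$, whence $h^{z}=h[h,z]\in H\cdot S=S$. Consequently $z^{-1}Sz=(z^{-1}Hz)\gen{z}=H^{z}\gen{z}\subseteq S\gen{z}=S$, and comparing cardinalities forces $z^{-1}Sz=S$, i.e.\ $z$ normalizes the subset $S$ (and hence $z^{i}Sz^{-i}=S$ for all $i$). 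The identity $(h_1z^{i})(h_2z^{j})=h_1\,(z^{i}h_2z^{-i})\,z^{i+j}$ together with $z^{i}h_2z^{-i}\in z^{i}Sz^{-i}=S$ then shows that $S$ is closed under multiplication, so being a finite nonempty subset closed under the group operation it is a subgroup.

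The only genuinely delicate point is the identification $\Omega_1(L)=\gen{y^{p^{k-1}},z}$ and the resulting cyclicity of $L/\Omega_1(L)$ in \ref{it:comm0}: this is where \cref{lem:abel-basic} does all the work, the bound $|\Omega_1(L)|\leq p^2$ pinning down $\Omega_1(L)$ exactly and the elementary abelianness of $\Omega_1(G_2)$ guaranteeing that $y^{p^{k-1}}$ and $z$ commute. Part \ref{it:comm} is then a formal permutability argument, whose only subtlety is the passage from $z^{-1}Sz\subseteq S$ to equality via finiteness; I expect no further obstacles there.
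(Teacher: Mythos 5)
Your argument is correct and follows essentially the same route as the paper: both parts hinge on \cref{lem:abel-basic} pinning down $\Omega_1(\gen{y,z})$ as the elementary abelian group $\gen{y^{p^{k-1}}}\gen{z}$, and the differences (the paper notes directly that $[y,z]=(z^{-1})^{y}z\in\Omega_1(\gen{y,z})$ instead of passing through the cyclic quotient $L/\Omega_1(L)$, and proves \ref{it:comm} by induction on word length rather than by your permutability argument for $S=H\gen{z}$) are cosmetic. The only nit is the degenerate case $y=1$, $z\neq 1$, where $\gen{y,z}$ is cyclic and your setup does not literally apply; the paper disposes of it separately, and of course $[1,z]=1$ there.
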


\begin{proof}
\ref{it:comm0}
We have $[y,z]=y^{-1}z^{-1}yz\in\Omega_1(\gen{y,z})$ and, if $y=1$, then the claim is clearly true. Assume now that $y\neq 1$ and let $y_1\in\gen{y}$ have order $p$. Then \cref{lem:abel-basic} yields $\gen{z}=\gen{y_1}$ or $\Omega_1(\gen{z,y})=\gen{z}\gen{y_1}$. In both cases, $[y,z]\in\gen{z}\gen{y_1}\subset \gen{z}\gen{y}$. 

\ref{it:comm} The inclusion from right to left is clear so we consider the other one. To this end, let $x\in\gen{H\cup\{z\}}$ and let $x_1,\ldots,x_t\in H\cup\{z\}$ be such that $x=x_1\cdots x_t$. We show by induction on $t$ that there are $h\in H$ and $\zeta\in\gen{z}$ such that $x=h\zeta$. When $t=0$ the claim is clear so we assume $t>0$ and let $h\in H$ and $\zeta\in\gen{z}$ be such that $x_1\cdots x_{t-1}=h\zeta$. It follows that $x=h\zeta x_t$. If $x_t\in\gen{z}$ we are clearly done, so we assume $x_t\in H$. It then follows from \ref{it:comm0} that 
\[
x=h \zeta x_t=h x_t [x_t,\zeta^{-1}]\zeta \in H\gen{x_t}\gen{\zeta^{-1}}\gen{\zeta}\in H\gen{\zeta} \subseteq H\gen{z}. 
\]
This concludes the proof. 
\end{proof}

\begin{lemma}\label{lem:laffey}
Let $p$ be a prime number and let $G_1$ and $G_2$ be finite $p$-groups. 
Assume that $G_1$ is abelian and that $\Sigma(G_1)\cong\Sigma(G_2)$. If $H\leq G_2$, then $\dG(H)\leq \dG(\Omega_1(H))$.
\end{lemma}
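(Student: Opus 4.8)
The plan is to argue by induction on $|H|$, exploiting the structure of the order-$p$ elements of $G_2$ established above. First I would record that, by \cref{lem:abel-basic}\ref{it:omega1.5}, the subgroup $\Omega_1(G_2)$ is elementary abelian, so it coincides with the set $\{g\in G_2 : g^p=1\}$; hence for every $H\leq G_2$ one has $\Omega_1(H)=H\cap\Omega_1(G_2)$, this is elementary abelian, and $\dG(\Omega_1(H))=\dim_{\F_p}\Omega_1(H)$. The statement to prove thus becomes $\dG(H)\leq\dim_{\F_p}\Omega_1(H)$. The base case $\Omega_1(H)=H$ holds with equality, so I may assume $\Omega_1(H)<H$ and split according to whether $\Omega_1(H)$ is contained in the Frattini subgroup $\Phi(H)$.

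If $\Omega_1(H)\not\leq\Phi(H)$, I would pick $z\in\Omega_1(H)\setminus\Phi(H)$ and a maximal subgroup $M<H$ with $z\notin M$, which exists since $\Phi(H)$ is the intersection of the maximal subgroups. As $z\in\Omega_1(G_2)$ has order $p$, \cref{lem:comm}\ref{it:comm} gives $H=\langle M,z\rangle=M\langle z\rangle$, whence $\dG(H)\leq\dG(M)+1$. On the other hand $\Omega_1(M)=M\cap\Omega_1(H)$ does not contain $z$, so $\dim_{\F_p}\Omega_1(H)\geq\dim_{\F_p}\Omega_1(M)+1$; combining this with the inductive bound $\dG(M)\leq\dG(\Omega_1(M))$ closes this case.

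The main case is $\Omega_1(H)\leq\Phi(H)$. Here every maximal subgroup $M$ contains $\Omega_1(H)$, so $\Omega_1(M)=\Omega_1(H)$ and the inductive hypothesis yields $\dG(M)\leq\dG(\Omega_1(H))$ for all such $M$; it therefore suffices to produce a single maximal subgroup $M$ with $\dG(M)\geq\dG(H)$, equivalently (as $\Phi(H)\leq M$) with $\Phi(M)\subsetneq\Phi(H)$. Since $\Omega_1(H)\leq\Phi(H)$, every element of a minimal generating set has order at least $p^2$; fixing such a generator $h$ and a maximal subgroup $M$ with $h\notin M$, one has $h^p\in\mho_1(H)\leq\Phi(H)\leq M$, and the goal reduces to showing $h^p\notin\Phi(M)=\mho_1(M)[M,M]$. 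I expect this last step to be the main obstacle: it fails precisely for quaternion-type configurations (for $H=Q_8$ one indeed gets $h^p\in\Phi(M)$ and every maximal subgroup drops the number of generators). For $p$ odd the commutator control of \cref{lem:comm} keeps the computation of $\Phi(M)$ close to the abelian case, so the required non-containment holds; for $p=2$ one must rule out subgroups isomorphic to $Q_8$, and this is exactly where the hypothesis $\Sigma(G_1)\cong\Sigma(G_2)$ with $G_1$ abelian is used, through the bound $|\Omega_1(K)|\leq p^2$ of \cref{lem:abel-basic}\ref{it:omega1}, which forces every noncyclic $2$-generated subgroup $K$ to satisfy $|\Omega_1(K)|=p^2$ and thereby excludes $Q_8$. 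A theorem of Laffey packages the resulting existence statement for the maximal subgroup $M$, completing the induction.
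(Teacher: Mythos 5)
Your reduction to the case $\Omega_1(H)\leq\Phi(H)$ is where the proof actually lives, and that case is not established. You need a maximal subgroup $M<H$ with $\Phi(M)\subsetneq\Phi(H)$, you correctly identify $Q_8$ as the configuration where this fails, but the mechanism you propose for excluding it is wrong: \cref{lem:abel-basic}\ref{it:omega1} gives only the \emph{upper} bound $|\Omega_1(K)|\leq p^2$ for $2$-generated $K$, and $Q_8$ satisfies $|\Omega_1(Q_8)|=2\leq 4$, so nothing forces $|\Omega_1(K)|=p^2$ and $Q_8$ is not excluded by this bound. (Note that the conclusion of the lemma applied to $H\cong Q_8$ would read $2\leq 1$, so any correct proof must implicitly rule out $Q_8\leq G_2$; in the paper this exclusion is a genuine consequence of the lemma, not an input one gets for free.) For odd $p$ you defer to ``a theorem of Laffey'' — the paper's \cref{rmk:laffey} confirms that \cite[Cor.~3]{Laf73} does cover the odd case once $\Omega_1(H)$ is known to be abelian — but the assertion that ``the commutator control of \cref{lem:comm} keeps $\Phi(M)$ close to the abelian case'' is not an argument, and for $p=2$ no valid argument is given at all.

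The paper takes a different and more global route that sidesteps the induction entirely. Writing $H=\gen{y_1,\dots,y_t}$ with $t=\dG(H)$, it uses \cref{lem:comm}\ref{it:comm} to show that \emph{every} order-$p$ element $z\notin H$ makes $\{y_1,\dots,y_t,z\}$ independent, so the number of order-$p$ elements independent from this simplex is exactly $p^d-p^r$ with $d=\dG(\Omega_1(G_2))$, $r=\dG(\Omega_1(H))$. Transporting this count through the complex isomorphism (which preserves element orders by \cref{prop:iso-orders}\ref{it:ord3}) to the abelian group $G_1$, where the corresponding count is at most $p^d-p^t$, yields $t\leq r$ directly — with the $Q_8$ obstruction eliminated automatically rather than case by case. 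If you want to salvage your induction, you would need to first prove $Q_8\not\leq G_2$ by a separate argument (compare \cref{prop:noQ8}, which counts square roots and uses that squaring is a homomorphism in $G_1$), and then actually carry out the verification that $h^p\notin\Phi(M)$.
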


\begin{proof}
Let $H$ be a subgroup of $G_2$.
If $G_1$ is cyclic, then the statement is clearly true; we assume therefore that $G_1$ is not cyclic.
Write $\dG(H)=t$ and $H=\gen{y_1,\ldots,y_t}$. Then $\{y_1,\ldots,y_t\}$ is a simplex in $\Sigma(G_2)$ corresponding to a simplex $\{x_1,\ldots,x_t\}$ of $\Sigma(G_1)$. Assume that $z$ is an element of order $p$ of $G_2$ that does not belong to $H$: we claim that $\{y_1,\ldots,y_t,z\}$ is independent. For a contradiction, assume that this is not the case and, since $z$ does not belong to $H$, up to reordering, we have $y_1\in\gen{y_2,\ldots,y_t,z}$. It follows then from \cref{lem:comm}\ref{it:comm} that there are $\tilde{h}\in\tilde{H}=\gen{y_2,\ldots,y_t}$ and $\zeta\in\gen{z}$ such that $y_1=\tilde{h}\zeta$. As a result, $\zeta\in H\cap\gen{z}=\{1\}$  and so $\{y_1,\ldots,y_t\}$ is not independent. Contradiction. 

The last argument, combined with \cref{lem:abel-basic}\ref{it:omega1.5} and \cref{prop:iso-orders}\ref{it:ord1}, shows that, writing 
\[
d=\dG(\Omega_1(G_2)) \ \ \textup{and}\ \ r=\dG(\Omega_1(H))
\]
there are $p^d-p^r$ elements of $G_2$ of order $p$ that are independent from $\{y_1,\ldots,y_t\}$. It follows from \cref{prop:iso-orders}\ref{it:ord3} that there are $p^d-p^r$ elements of order $p$ in $G_1$ that are independent from $\{x_1,\ldots,x_t\}$. The group $G_1$ being abelian, this means that 
\[
p^d-p^r\leq |\Omega_1(G_1)|-|\Omega_1(\gen{x_1,\ldots,x_t})|=p^d- |\Omega_1(\gen{x_1,\ldots,x_t})|
\]
from which we derive that $\dG(H)=t=\dG(\Omega_1(\gen{x_1,\ldots,x_t}))\leq r=\dG(\Omega_1(H))$.
\end{proof}

\begin{remark}\label{rmk:laffey}
    When $p$ is odd and $\Omega_1(H)$ is abelian, \cref{lem:laffey} could be also deduced from  \cite[Cor.~3]{Laf73}. 
\end{remark}

\noindent
The following results are extracted from the classification of minimal metacyclic $p$-groups given in \cite[Thm.~3.2]{Bla61}; see also \cite[Thm.~2]{ZAX06}. 

\begin{lemma}\label{lem:min-nonmeta}
Let $p$ be a prime number and let $K$ be a finite $p$-group with $\dG(K)=2$. Assume that all proper subgroups of $K$ are metacyclic, but $K$ itself is not. Then $p>2$ and one of the following holds:
\begin{enumerate}[label=$(\arabic*)$]
%    \item\label{it:min1} $K$ is elementary abelian of order $p^3$;
    \item\label{it:min2} $K$ is isomorphic to the Heisenberg group of order $p^3$;
    \item\label{it:min3} $K$ is isomorphic to \textup{\texttt{SmallGroup}(81,10)}.
\end{enumerate}
\end{lemma}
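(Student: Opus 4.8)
The plan is to recognise the statement as the restriction to the case $\dG(K)=2$ of the classical classification of the \emph{minimal non-metacyclic} $p$-groups (those non-metacyclic $p$-groups all of whose proper subgroups are metacyclic), carried out in \cite{Bla61,ZAX06}. First I would record the elementary reductions. A $2$-generated abelian $p$-group has the form $C_{p^a}\times C_{p^b}$ and is therefore metacyclic; since $K$ is non-metacyclic, it must be nonabelian, so $K'\neq 1$. Every maximal subgroup $M$ of $K$ is proper, hence metacyclic, and a metacyclic $p$-group satisfies $\dG(M)\le 2$. Thus $K$ is a nonabelian $2$-generated $p$-group all of whose maximal subgroups are metacyclic, which is exactly the hypothesis of the cited classification.

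The case in which $K$ has exponent $p$ can be handled directly and is instructive. A metacyclic $p$-group of exponent $p$ is cyclic-by-cyclic with both factors of order at most $p$, hence has order at most $p^2$; consequently any $p$-group of order at least $p^3$ and exponent $p$ is non-metacyclic. If $K$ had exponent $p$ and order at least $p^4$, a maximal subgroup would be a non-metacyclic proper subgroup, contradicting minimality. Hence in this case $|K|=p^3$, and being nonabelian with $\dG(K)=2$ forces $p$ odd and $K$ isomorphic to the Heisenberg group of order $p^3$, which is case \ref{it:min2}. (For $p=2$ an exponent-$2$ group is elementary abelian with $\dG=3$, so this branch yields nothing.)

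It remains to treat the case $\mho_1(K)\neq 1$, and here I would quote the classification. Its full output is the list: $C_p\times C_p\times C_p$ for every $p$; the Heisenberg group of order $p^3$ for $p$ odd; $Q_8\times C_2$ for $p=2$; and a single group of order $3^4$ for $p=3$. Filtering by $\dG(K)=2$ discards $C_p\times C_p\times C_p$ (which has $\dG=3$) and $Q_8\times C_2$ (for which $(Q_8\times C_2)/\Phi(Q_8\times C_2)\cong C_2\times C_2\times C_2$, so again $\dG=3$); what survives is the Heisenberg group for odd $p$ and the order-$81$ group, namely \texttt{SmallGroup}(81,10). As neither occurs for $p=2$, we conclude $p>2$ and land in cases \ref{it:min2} and \ref{it:min3}.

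The genuinely hard part---and the step I would not attempt to reprove---is exactly the content concealed in the last paragraph: bounding $|K|$ by $p^4$ when $\mho_1(K)\neq 1$ and showing that an order-$p^4$ example exists \emph{only} for $p=3$. This hinges on a metacyclicity criterion in terms of $\Omega_1(K)$ that is valid for $p\ge 5$ (where the relevant groups, of class at most $3$, are regular) but fails precisely at $p=3$; this failure is what manufactures the exceptional group \texttt{SmallGroup}(81,10). I would therefore invoke \cite{Bla61,ZAX06} for this threshold phenomenon rather than developing it from scratch.
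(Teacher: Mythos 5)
Your approach coincides with the paper's: the lemma is stated there without an independent proof, being ``extracted from'' Blackburn's classification of minimal non-metacyclic $p$-groups \cite[Thm.~3.2]{Bla61} (see also \cite[Thm.~2]{ZAX06}) and then filtered by the hypothesis $\dG(K)=2$, which is exactly what you propose; your self-contained treatment of the exponent-$p$ branch is a correct bonus rather than a different route. The one point that needs fixing is your quoted ``full output'' of the classification, which is incomplete for $p=2$: besides $C_p\times C_p\times C_p$ and $Q_8\times C_2$, the list also contains the central product of $Q_8$ with $C_4$ (of order $16$) and the group $\texttt{SmallGroup}(32,32)$ of order $32$ --- these are precisely the additional $2$-group cases recorded in the paper's companion statement \cref{lem:min-nonmeta-2}. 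Since your argument derives $p>2$ from the assertion that no entry of the list occurs for $p=2$ after filtering, the omission is a genuine (if easily repaired) gap: you must also check that the two omitted groups are excluded. They are, because each has Frattini quotient elementary abelian of order $8$ and hence is $3$-generated, so the hypothesis $\dG(K)=2$ discards them exactly as it discards $C_p^3$ and $Q_8\times C_2$; but this verification has to be stated, and it also shows that your parenthetical bound ``$|K|\le p^4$ when $\mho_1(K)\neq 1$'' cannot be right as a statement about all primes, since the order-$32$ example violates it. With the list corrected, the filtering argument and your concluding remarks about where the real difficulty of Blackburn's theorem lies (the threshold at $p=3$ producing $\texttt{SmallGroup}(81,10)$) are accurate.
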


\begin{lemma}\label{lem:min-nonmeta-2}
Let $K$ be a finite $2$-group such that all proper subgroups of $K$ are metacyclic, but $K$ itself is not. Then one of the following holds:
\begin{enumerate}[label=$(\arabic*)$]
    \item\label{it:2min0} $K$ is elementary abelian of order $8$.
    \item\label{it:2min1} $K$ contains a subgroup isomorphic to the quaternion group $Q_8$;
    \item\label{it:2min2} $K$ is isomorphic to \textup{\texttt{SmallGroup}(32,32)}.
\end{enumerate}
\end{lemma}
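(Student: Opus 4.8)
The plan is to read the result off Blackburn's classification of minimal non-metacyclic $p$-groups, specialized to $p=2$, exactly as \cref{lem:min-nonmeta} was obtained from the same source. Recall that $K$ here is a \emph{minimal non-metacyclic} $2$-group: it is not metacyclic, but each of its proper subgroups is. By \cite[Thm.~3.2]{Bla61} (see also \cite[Thm.~2]{ZAX06}) such groups form an explicit finite list, and the entire content of the lemma is the observation that, among the $2$-groups on this list, precisely two fail to contain a copy of $Q_8$ --- namely the elementary abelian group of order $8$ and \texttt{SmallGroup}(32,32) --- while every other entry contains a quaternion subgroup. Thus the proof amounts to inspecting the list and, for each group, deciding whether it has a subgroup isomorphic to $Q_8$.

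Before carrying out this inspection it is worth recording a structural constraint that orients the search. Since $K$ is neither cyclic nor metacyclic, $\dG(K)\geq 2$; and the case $\dG(K)=2$ is impossible, because \cref{lem:min-nonmeta} forces $p>2$ for a $2$-generated minimal non-metacyclic group. On the other hand, if $M$ is a maximal subgroup of $K$ then $M$ is metacyclic, so $\dG(M)\leq 2$; combined with $\Phi(M)\leq\Phi(K)$, which gives $\dG(M)\geq\dG(K)-1$, we obtain $\dG(K)\leq 3$. Therefore $\dG(K)=3$ and $K/\Phi(K)$ is elementary abelian of rank $3$; in particular every minimal non-metacyclic $2$-group has order at least $8$ and is of this shape, which is consistent with Blackburn's list.

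It remains to perform the case check. The group of order $8$ with $\dG(K)=3$ is forced to be $C_2\times C_2\times C_2$, giving the first alternative. At order $16$ one finds the groups $Q_8\times C_2$ and the central product $Q_8\circ C_4$, each of which visibly contains $Q_8$ and so falls under the second alternative. The only remaining exceptional group is the order-$32$ group \texttt{SmallGroup}(32,32), which contains no quaternion subgroup and is therefore listed separately as the third alternative. The main --- and really the only --- obstacle is this finite verification: one must confirm, for each group appearing in Blackburn's classification, whether or not it contains $Q_8$, and in particular that $C_2\times C_2\times C_2$ and \texttt{SmallGroup}(32,32) are the sole exceptions. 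Since the list is explicit and finite, this is a routine check (and can be confirmed directly in GAP), so the lemma follows from the classification together with this inspection.
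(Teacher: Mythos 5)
Your proposal is correct and takes essentially the same route as the paper: the paper offers no argument beyond citing the classification of minimal non-metacyclic $p$-groups in \cite[Thm.~3.2]{Bla61} (see also \cite[Thm.~2]{ZAX06}), and your proof is precisely that citation plus the routine inspection of the four groups on the list for $p=2$, namely $C_2\times C_2\times C_2$, $Q_8\times C_2$, the central product of $Q_8$ and $C_4$, and \texttt{SmallGroup}(32,32). Your added observations (that $\dG(K)=3$, and that \texttt{SmallGroup}(32,32) contains no copy of $Q_8$) are correct, though not strictly needed since the three alternatives in the statement are not required to be mutually exclusive.
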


\begin{proposition}\label{prop:noQ8}
    Let $G_1$ and $G_2$ be finite $2$-groups. 
Assume that $G_1$ is abelian and that $\tilde\Sigma(G_1)\cong\tilde\Sigma(G_2)$. Then $G_2$ does not have subgroups isomorphic to $Q_8$.  
\end{proposition}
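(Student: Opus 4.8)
The plan is to argue by contradiction. Assume $Q=\langle i,j\rangle\cong Q_8$ is a subgroup of $G_2$ and let $t=i^2=j^2$ be its unique involution. The strategy is to compare two counts of the square roots of $t$ inside $G_2$: on one side, the number that the presence of $Q$ together with an abundance of commuting involutions forces to exist; on the other side, the number that $t$ is permitted to have because $G_1$ is abelian. These two counts should turn out to be incompatible.

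First I would pin down the permitted count. Set $m=|\{z\in G_2:z^2=t\}|$. Using \cref{prop:iso-orders}\ref{it:ord2} (applied with the roles of $G_1$ and $G_2$ interchanged, which is legitimate since the hypothesis $\tilde\Sigma(G_1)\cong\tilde\Sigma(G_2)$ is symmetric) I would produce $w\in G_1$ of order $2$ whose number of square roots in $G_1$ equals $m$. As $i^2=t$ gives $m>0$, this root set is nonempty, hence a coset of $\Omega_1(G_1)=\{x\in G_1:x^2=1\}$, so $m=|\Omega_1(G_1)|$. A parallel computation using \cref{prop:iso-orders}\ref{it:ord1} (equality of the number of elements of order dividing $2$) together with \cref{lem:abel-basic}\ref{it:omega1.5} (so that $\Omega_1(G_2)$ is elementary abelian, and therefore coincides as a set with $\{z\in G_2:z^2=1\}$) gives $|\Omega_1(G_2)|=|\Omega_1(G_1)|$. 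Thus $m=|\Omega_1(G_2)|$.

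The core geometric step is to show $\Omega_1(G_2)\leq C_{G_2}(Q)$, i.e.\ that every involution of $G_2$ centralizes $Q$. Given an involution $z$ and the generator $i$, \cref{lem:comm}\ref{it:comm} yields $\langle i,z\rangle=\langle i\rangle\langle z\rangle$, a group of order at most $8$. If $z\notin\langle i\rangle$ its order is exactly $8$, with normal cyclic subgroup $\langle i\rangle\cong C_4$, so it is either $C_4\times C_2$ or the dihedral group $D_8$; since $\Omega_1(\langle i,z\rangle)\leq\Omega_1(G_2)$ is elementary abelian, the dihedral case is excluded and $[i,z]=1$. The case $z\in\langle i\rangle$ forces $z=t$, which is central in $Q$. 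Repeating the argument with $j$ in place of $i$ shows that $z$ centralizes $\langle i,j\rangle=Q$.

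Finally I would reach a contradiction by over-counting $S_t=\{z\in G_2:z^2=t\}$. Writing the elementary abelian group $\Omega_1(G_2)$ as $\langle t\rangle\times E$, the centralizing step gives a subgroup $QE=Q\times E\leq G_2$ with $Q\cap E=1$. Each of the six elements of $Q$ squaring to $t$, multiplied by each element of $E$, lands in $S_t$ and these products are pairwise distinct, producing $6\,|E|$ square roots of $t$. Since $|E|=|\Omega_1(G_2)|/2=m/2$, this means $m=|S_t|\geq 3m$, which is absurd. I expect the main obstacle to be precisely the centralizing step: its validity rests entirely on $\Omega_1(G_2)$ being elementary abelian, which is exactly what excludes the dihedral configuration (and, in a more global formulation, the semidihedral and central-product configurations) that would otherwise allow $Q_8$ to sit noncentrally in $G_2$.
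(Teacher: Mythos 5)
Your proposal is correct and follows the same overall strategy as the paper's proof: show that every involution of $G_2$ centralizes the quaternion subgroup, then derive a contradiction by counting square roots of its central involution against what an abelian group (where squaring is a homomorphism) permits. The one place where you genuinely diverge is the centralizing step. The paper takes an involution $z$ outside the quaternion subgroup $X$, uses \cref{lem:comm}\ref{it:comm} to form $K=X\gen{z}$ of order $32$, and then resolves the structure of $K$ by a search in the GAP Small Groups Library, concluding $K\cong X\times\gen{z}$. You instead work one generator at a time: $\gen{i,z}=\gen{i}\gen{z}$ has order $8$ with $\gen{i}\cong C_4$ normal, so it is $C_4\times C_2$ or $D_8$, and $D_8$ is excluded because $\Omega_1(D_8)=D_8$ is nonabelian, contradicting \cref{lem:abel-basic}\ref{it:omega1.5}; repeating with $j$ gives $z\in C_{G_2}(Q)$. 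This replaces the computer search with a short hand classification of groups of order $8$, which is a modest but real gain in self-containedness; the final count ($6\cdot|\Omega_1(G_2)|/2=3\,|\Omega_1(G_1)|$ square roots of $t$ versus the at most $|\Omega_1(G_1)|$ allowed in the abelian model) is identical in both arguments.
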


\begin{proof}
Assume for a contradiction that $X$ is a subgroup of $G_2$ that is isomorphic to the quaternion group and note that $\Phi(X)=\mho_1(X)=\ZG(X)$. Let now $z$ be an element of order $2$ of $G_2$ that does not belong to $X$ and define $K=\gen{X\cup\{z\}}$. By \cref{lem:comm}\ref{it:comm} we have that $K=X\gen{z}$ and $K$ has order $32$. In particular, $X$ is normal in $K$ and so $z$ maps to an element $\varphi_z$ of $\Aut(X)$ of order dividing $2$. It follows that $z$ centralizes $X$ or $|\varphi_z|=2$.
 Moreover, it follows from \cref{lem:abel-basic}\ref{it:omega1.5} that $\Omega_1(K)$ is abelian. 
Intersecting all conditions, a search in the Small Group Library of GAP shows that $K$ is necessarily isomorphic to the direct product of $X$ and $\gen{z}$, and so $z$ centralizes $X$. 

Again by \cref{lem:abel-basic}\ref{it:omega1.5} we have that $\Omega_1(G_2)$ is elementary abelian and we write $|\Omega_1(G_2)|=2^t$. From the argument above, we derive that $\Omega_1(G_2)$ centralizes $X$ and so $G_2$ contains at least $(8-2)2^{t-1}=3\cdot 2^t$ elements of order $4$ whose square equals the unique element of order $2$ of $X$. Thanks to \cref{prop:iso-orders}\ref{it:ord1}, we have $|\Omega_1(G_1)|=|\Omega_1(G_2)|=2^t$ and, by \cref{prop:iso-orders}\ref{it:ord2}, also that, in $G_1$, there are at least $3\cdot 2^t$ elements whose square is the same element of order $2$. This is a contradiction to the fact that $G_1$ is abelian and so squaring is a homomorphism.  
\end{proof}

\begin{proposition}\label{prop:powerful}
    Let $p>2$ be a prime number and let $G_1$ and $G_2$ be finite $p$-groups. 
Assume that $G_1$ is abelian and that $\Sigma(G_1)\cong\Sigma(G_2)$ or $\tilde\Sigma(G_1)\cong\tilde\Sigma(G_2)$. Then $G_2$ is powerful. 
\end{proposition}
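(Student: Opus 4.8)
The plan is to deduce that $G_2$ is powerful from the single structural fact that every $2$-generated subgroup of $G_2$ is metacyclic, exploiting throughout that $p$ is odd. Recall that, for odd $p$, the group $G_2$ is powerful exactly when $[G_2,G_2]\leq\mho_1(G_2)$. Since $\mho_1(G_2)$ is normal, this holds if and only if $[x,y]\in\mho_1(G_2)$ for all $x,y\in G_2$; and because $\mho_1(\gen{x,y})\leq\mho_1(G_2)$, it is enough to show that $[x,y]\in\mho_1(\gen{x,y})$ for every $x,y$, i.e.\ that each $2$-generated subgroup $H=\gen{x,y}$ is itself powerful. Thus the statement reduces to the two claims: (i) a metacyclic $p$-group with $p$ odd is powerful, and (ii) every $2$-generated subgroup of $G_2$ is metacyclic.

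For (i), let $H$ be metacyclic with cyclic normal subgroup $A=\gen{a}$ and cyclic quotient $H/A$, say $H=\gen{a,b}$ with $b^{-1}ab=a^r$. Since $H$ is a $p$-group, the automorphism $a\mapsto a^r$ of the cyclic $p$-group $A$ has $p$-power order; as $p$ is odd, $\Aut(A)$ is cyclic and its Sylow $p$-subgroup consists precisely of the maps with $r\equiv 1\pmod p$, so $p\mid r-1$. Then $[a,b]=a^{r-1}\in\gen{a^p}\leq\mho_1(H)$, and since $A$ is abelian and $H/A$ is cyclic, $[H,H]\leq A$ is generated by the conjugates of $[a,b]$, all of which lie in $\gen{a^p}$; hence $[H,H]\leq\mho_1(H)$ and $H$ is powerful. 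This is exactly where $p>2$ is used: for $p=2$ the map $a\mapsto a^{-1}$ has $2$-power order, and metacyclic $2$-groups need not be powerful.

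For (ii), fix a $2$-generated $H\leq G_2$. If $H$ is cyclic there is nothing to prove, so assume $\dG(H)=2$, whence $|\Omega_1(H)|\leq p^2$ by \cref{lem:abel-basic}\ref{it:omega1}. Suppose, for a contradiction, that $H$ is not metacyclic, and choose $K\leq H$ minimal with respect to being non-metacyclic, so that every proper subgroup of $K$ is metacyclic while $K$ is not. By Blackburn's classification of minimal non-metacyclic $p$-groups for odd $p$, either $\dG(K)=3$ and $K\cong C_p^3$, or $\dG(K)=2$ and $K$ is one of the groups in \cref{lem:min-nonmeta}, namely the Heisenberg group of order $p^3$ or $\texttt{SmallGroup}(81,10)$. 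In each case $|\Omega_1(K)|\geq p^3$: for $C_p^3$ and the Heisenberg group this is immediate from the exponent, and for $\texttt{SmallGroup}(81,10)$ one checks by inspection that $\Omega_1(K)$ has order $27$. Since $\Omega_1(K)\leq\Omega_1(H)$, this contradicts $|\Omega_1(H)|\leq p^2$. Hence $H$ is metacyclic, which together with (i) gives that $G_2$ is powerful.

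Two remarks on the plan. First, the argument is uniform in the hypotheses $\Sigma(G_1)\cong\Sigma(G_2)$ and $\tilde\Sigma(G_1)\cong\tilde\Sigma(G_2)$, since it uses only \cref{lem:abel-basic}, which covers both cases; in particular \cref{lem:laffey} is not needed here, although for the independence complex it affords an alternative route to the $\Omega_1$-bounds, cf.\ \cref{rmk:laffey}. Second, the main obstacle is step (ii): one must invoke the full list of minimal non-metacyclic $p$-groups and, in particular, control $|\Omega_1|$ for the exceptional group $\texttt{SmallGroup}(81,10)$ and treat the rank-$3$ possibility $C_p^3$ that is not recorded in \cref{lem:min-nonmeta}. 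The key point is that all of these groups carry an $\Omega_1$ of order at least $p^3$, too large to embed in a $2$-generated subgroup of $G_2$, and it is precisely this that forces metacyclicity.
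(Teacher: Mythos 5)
Your reduction of powerfulness to the statement that every $2$-generated subgroup of $G_2$ is powerful is correct, and so is step (i): a metacyclic $p$-group with $p$ odd is powerful. The gap is in step (ii), precisely at the exceptional group $K\cong\texttt{SmallGroup}(81,10)$: the claim that $|\Omega_1(K)|=27$ is false. With the presentation used later in the paper, $K=\gen{a,b,c}$ with $a^9=b^9=c^3=1$, $a^3=b^{-3}$, $[b,a]=c$, $[c,a]=a^3$, $[c,b]=1$, every element outside $\Phi(K)=\gen{a^3,c}$ cubes to a nontrivial element of $\gen{a^3}$, so the elements of order dividing $3$ are exactly those of $\Phi(K)$ and $\Omega_1(K)=\gen{a^3,c}$ has order $9=p^2$. (One can also see $|\Omega_1(K)|\leq 9$ abstractly: $\Omega_1(K)$ is a proper subgroup, hence metacyclic by minimality of $K$, and no metacyclic group of order $27$ is generated by its elements of order dividing $3$.) Thus the bound $|\Omega_1(H)|\leq p^2$ from \cref{lem:abel-basic}\ref{it:omega1} does not exclude this case, and your contradiction evaporates in the one subcase that is genuinely difficult. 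This is not a repairable slip within your framework: ruling out $\texttt{SmallGroup}(81,10)$ is exactly the content of the $p=3$ case of \cref{prop:2meta}, and the paper's argument there \emph{uses} \cref{prop:powerful} (powerfulness gives $\mho_2(G_2)=\{x\in G_2 : x^9=1\}$, allowing a reduction to exponent $9$ and then a Dedekind-lemma computation showing that $K=\gen{a}\Omega_1(K)$ would have order $27$). Invoking \cref{prop:2meta} to patch step (ii) would therefore be circular.

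For contrast, the paper proves \cref{prop:powerful} by a much shorter counting argument that avoids metacyclicity altogether: by \cref{prop:iso-orders}\ref{it:ord2} the set $B=\{z^p : z\in G_2\}$ of $p$-th powers has the same cardinality as $\{x^p : x\in G_1\}=\mho_1(G_1)=\Phi(G_1)$; since $B\subseteq\mho_1(G_2)\subseteq\Phi(G_2)$ and $|B|=|\Phi(G_2)|$, it follows that $B=\mho_1(G_2)=\Phi(G_2)$, which for odd $p$ is powerfulness. Note also that your route, if it could be completed, would prove the strictly stronger statement that $G_2$ is modular (\cref{thm:modular}); the paper only reaches that later, and only by first establishing \cref{prop:powerful}.
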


\begin{proof}
Thanks to \cref{prop:iso-orders}\ref{it:ord2}, there is a bijection between the sets 
$$A=\{x^p\mid x\in G_1\} \ \ \textup{ and }\ \ B=\{z^p\mid z\in G_2\}.$$ Write $|G_1|=|G_2|=p^n.$ Since $A=\Phi(G_1),$
we have that $|B|=|A|=p^{n-\dG(G_1)}.$  This yields in particular
$|B|=|\Phi(G_2)|$. Since $B\subseteq \Phi(G_2),$ we conclude that $B=\Phi(G_2)=\mho_1(G_2).$ 
\end{proof}

\begin{lemma}\label{lem:2meta}
    Let $G_1$ and $G_2$ be finite $2$-groups. 
Assume that $G_1$ is abelian and that $\Sigma(G_1)\cong\Sigma(G_2)$ or $\tilde\Sigma(G_1)\cong\tilde\Sigma(G_2)$. If $K$ is a subgroup of $G_2$ with $\dG(K)=2$, then $K$ is metacyclic. 
\end{lemma}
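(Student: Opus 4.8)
The plan is to argue by contradiction: assuming $K$ is not metacyclic, I would pass to a minimal non-metacyclic subgroup and invoke the trichotomy of \cref{lem:min-nonmeta-2}, ruling out each of the three possibilities using the structural constraints that $\Sigma(G_1)\cong\Sigma(G_2)$ (respectively $\tilde\Sigma(G_1)\cong\tilde\Sigma(G_2)$) forces on $G_2$.

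First I would record the two ``soft'' facts that hold in both cases. Since $\dG(K)=2$, \cref{lem:abel-basic}\ref{it:omega1} gives $|\Omega_1(K)|\leq 4$, while \cref{lem:abel-basic}\ref{it:omega1.5} tells us that $\Omega_1(G_2)$, and hence $\Omega_1(L)$ for every $L\leq G_2$ by monotonicity of $\Omega_1$, is elementary abelian. Now suppose that $K$ is not metacyclic and let $M\leq K$ be a non-metacyclic subgroup of minimal order; then every proper subgroup of $M$ is metacyclic, so \cref{lem:min-nonmeta-2} applies to $M$, which is itself a finite $2$-group.

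I would then eliminate the three cases. If $M\cong C_2^3$ (case \ref{it:2min0}), then $M\subseteq\Omega_1(K)$ forces $|\Omega_1(K)|\geq 8$, contradicting $|\Omega_1(K)|\leq 4$. If $M$ contains a copy $Q$ of $Q_8$ (case \ref{it:2min1}), I would split according to the hypothesis: in the strong case \cref{prop:noQ8} directly forbids any $Q_8$ inside $G_2$, while in the plain case \cref{lem:laffey} applied to $H=Q$ yields $2=\dG(Q_8)\leq\dG(\Omega_1(Q_8))=\dG(C_2)=1$, which is absurd. The remaining case is $M\cong\texttt{SmallGroup}(32,32)$ (case \ref{it:2min2}); here, being minimal non-metacyclic, $M$ contains neither $C_2^3$ nor $Q_8$, so its involutions cannot span an elementary abelian group of rank $3$. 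A direct inspection of this single group of order $32$, in the spirit of the computation used in the proof of \cref{prop:noQ8}, shows that $\Omega_1(M)$ is nonetheless non-abelian, which contradicts the fact that $\Omega_1(M)\leq\Omega_1(G_2)$ is elementary abelian.

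The main obstacle is this last case: unlike cases \ref{it:2min0} and \ref{it:2min1}, which fall out of the two soft invariants (and of \cref{prop:noQ8} in the strong case), excluding \texttt{SmallGroup}(32,32) seems to require its actual subgroup structure, specifically the presence of two non-commuting involutions forcing $\Omega_1(M)$ to be non-abelian, which I expect to verify by a finite computation. An alternative available only in the plain case would be to apply \cref{lem:laffey} once more: if $\dG(M)=3$ while $\Omega_1(M)\leq\Omega_1(K)$ has rank at most $2$, then $3=\dG(M)\leq\dG(\Omega_1(M))\leq 2$ is impossible; since this route is closed for the strong complex, I would prefer the uniform argument through the non-abelianity of $\Omega_1(M)$, as it settles both hypotheses at once.
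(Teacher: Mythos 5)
Your setup (minimal non-metacyclic subgroup, trichotomy of \cref{lem:min-nonmeta-2}) matches the paper, and your treatment of the first two cases is sound: $C_2^3$ dies on $|\Omega_1(K)|\leq 4$, and a $Q_8$ subgroup dies on \cref{prop:noQ8} in the strong case and on \cref{lem:laffey} (applied either to $Q_8$ itself, as you do, or to the minimal subgroup $M$, as the paper does) in the plain case. The fatal problem is your third case. Your proposed contradiction rests on the claim that $\Omega_1(M)$ is non-abelian for $M\cong\texttt{SmallGroup}(32,32)$, and this is false: in that group one has $\Omega_1(M)=\mho_1(M)=\Phi(M)=\ZG(M)\cong C_2\times C_2$ (the group has exactly three involutions, all central), so $\Omega_1(M)$ is elementary abelian of order $4$ and is perfectly compatible with every ``soft'' invariant you have extracted. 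There are no two non-commuting involutions to be found. Consequently, in the strong case your argument proves nothing for this group, and there is no ``finite computation'' that will rescue it along these lines.

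This is precisely why the paper's proof is long: after reducing to $J\cong\texttt{SmallGroup}(32,32)$ in the $\tilde\Sigma$ case, it takes an involution $z\notin J$, forms $L=J\gen{z}$ of order $64$ via \cref{lem:comm}, shows by analysing the induced order-$\leq 2$ automorphism of $J$ (using the constraint $|\Omega_1(\gen{x,z})|\leq 4$ from \cref{lem:abel-basic}) that $z$ must centralize $J$, and then counts: at least $7\cdot 2^{t-1}$ elements of $G_2$ square into the three-element set $\Omega_1(J)\setminus\{1\}$, which via \cref{prop:iso-orders} contradicts the fact that squaring is a homomorphism on the abelian group $G_1$ (whose square map has fibres of constant size $2^t$, allowing at most $3\cdot 2^t<7\cdot 2^{t-1}$ such elements). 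Your fallback for the plain case ($3=\dG(M)\leq\dG(\Omega_1(M))\leq 2$ via \cref{lem:laffey}) is correct and is essentially the paper's plain-case argument; but for the strong case you need to supply something like the counting argument above, and your ``uniform'' route is closed.
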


\begin{proof}
Let $K$ be a subgroup of $G_2$ such that $\dG(K)=2$.
For a contradiction, assume that $K$ is not metacyclic and let $J\leq K$ be of minimal order with this property. It follows that $J$ is non-trivial and that every proper subgroup of $J$ is metacyclic. 
Since every subgroup $L$ of $K$ satisfies $\Omega_1(L)\leq \Omega_1(K)$, \cref{lem:abel-basic}\ref{it:omega1} ensures that $|\Omega_1(L)|\leq 4$ and so, as a consequence of \cref{lem:min-nonmeta-2}, the group $J$ is not abelian. 
If $\Sigma(G_1)\cong\Sigma(G_2)$, then \cref{lem:laffey} implies that $\dG(J)\leq \dG(\Omega_1(K))=2$, contradicting \cref{lem:min-nonmeta}.

 We deduce that $\tilde\Sigma(G_1)\cong\tilde\Sigma(G_2)$. 
Then \cref{lem:min-nonmeta-2} combined with \cref{prop:noQ8} yields that $J\cong\texttt{SmallGroup}(32,32)$. We represent $J$ as in \cite[Thm.~2(5)]{ZAX06}:
\[
J=\gen{
a,b,c \mid a^4=b^4=1, \ c^2=a^2b^2, \ [a,b]=b^2, \ [a,c]=a^2, \ [b,c]=1
}.
\]
Then in $J$ the following are satisfied:
    \begin{enumerate}[label=$(\roman*)$]
        \item\label{it:J1} $\Omega_1(J)=\mho_1(J)=\gamma_2(J)=\Phi(J)=\ZG(J)$ and $|\mho_1(J)|=4$;
        \item\label{it:J2} $c^2=a^2b^2=(ab)^2b^2$ and $\Omega_1(J)=\gen{b^2, c^2}$.
    \end{enumerate}
    Let now $z$ be an element of order $2$ of $G_2$ that does not belong to $J$. Define $L=\gen{J\cup\{z\}}$ and so \cref{lem:comm}\ref{it:comm} ensures that $L=J\gen{z}$ and $L$ has order $64$. In particular, $J$ is normal in $L$ and so $z$ maps to an element $\varphi_z$ of $\Aut(J)$ with $\varphi_z^2=1$. We claim that $\varphi_z=1$ and so that $z$ centralizes $J$. To this end, we recall that every $2$-generated subgroup $H$ of $L$ (and thus of $G_2$) satisfies $|\Omega_1(L)|\leq 4$. This holds in particular for every $H=\gen{x,z}$ with $x\in J\setminus\Phi(J)$, and so \ref{it:J1} yields that $\varphi_z(x)\in\gen{x}$. In other words, if $x\in J\setminus\Phi(J)$, then $\varphi_z(x)=x$ or $\varphi_z(x)=x^3$. We start by showing that $\varphi_z(b)=b$. Assume this is not the case and that $\varphi_z(b)=b^3$. Then $\varphi_z(a)=a$ or $\varphi_z(a)=a^3$ and so we have two possibilities: 
    \begin{itemize}
        \item $\varphi_z(ab)=ab^3=(ab)b^2$, or
        \item $\varphi_z(ab)=a^3b^3=(ab)a^2b^2=(ab)c^2.$ 
    \end{itemize}
    In both cases, from \ref{it:J2} we derive that $\varphi_z(ab)\in\gen{ab}$  implies that $\Omega_1(J)=\gen{b^2,c^2}\subseteq \gen{ab}$, which is a contradiction. So we have proven that $\varphi_z(b)=b$. With similar arguments one shows that $\varphi_z(a)=a$ and $\varphi_z(c)=c$. We derive thus that $z$ centralizes $J$. 
    
    Thanks to \cref{lem:abel-basic}\ref{it:omega1.5}, recall now that $\Omega_1(G_2)$ is elementary abelian and write $|\Omega_1(G_2)|=2^t$. By \ref{it:J1}, we have that $t\geq 2$ and it follows from the discussion above that $\Omega_1(G_2)$ centralizes $J$. In particular the number of elements of $G_2$ whose square is in $J\setminus\{1\}$ is lower-bounded by $(32-4)2^{t-2}=7\cdot 2^{t-1}$. It follows from \cref{prop:iso-orders}\ref{it:ord1} that $|\Omega_1(G_1)|=2^t$ and, from \cref{prop:iso-orders}\ref{it:ord2}, that there are, in $G_1$, at least $7\cdot 2^{t-1}$ elements whose square lives in a set of cardinality $3$. This contradicts the fact that squaring is a homomorphism in $G_1$.
\end{proof}

\begin{proposition}\label{prop:2meta}
    Let $p$ be a prime number and let $G_1$ and $G_2$ be finite $p$-groups. 
Assume that $G_1$ is abelian and that $\Sigma(G_1)\cong\Sigma(G_2)$ or $\tilde\Sigma(G_1)\cong\tilde\Sigma(G_2)$. If $K$ is a subgroup of $G_2$ with $\dG(K)=2$, then $K$ is metacyclic.  
\end{proposition}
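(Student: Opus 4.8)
The plan is to reduce \cref{prop:2meta} to the already-settled prime-by-prime cases, dispatching the odd primes and the prime $2$ separately, since the two scenarios rely on different preparatory lemmas. First I would observe that the case $p=2$ is exactly the content of \cref{lem:2meta}, which already establishes that every $2$-generated subgroup of $G_2$ is metacyclic under either hypothesis $\Sigma(G_1)\cong\Sigma(G_2)$ or $\tilde\Sigma(G_1)\cong\tilde\Sigma(G_2)$. So the only thing left is to treat odd primes $p$, where I intend to mimic the structure of the proof of \cref{lem:2meta} but replace the quaternion-exclusion and the $2$-group minimal-nonmetacyclic classification by their odd-prime analogues.

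For $p$ odd I would argue by contradiction, assuming $K$ is a $2$-generated subgroup of $G_2$ that is not metacyclic, and passing to a subgroup $J\leq K$ of minimal order that is not metacyclic; then every proper subgroup of $J$ is metacyclic, so \cref{lem:min-nonmeta} applies and tells us $J$ is either the Heisenberg group of order $p^3$ or \texttt{SmallGroup}(81,10). The key structural input on the ambient group is \cref{prop:powerful}, which guarantees that $G_2$ is powerful when $p>2$. The crux will be to show that a powerful $p$-group (for $p$ odd) cannot contain either of these two minimal non-metacyclic groups as a subgroup, or more precisely that the relevant $2$-generated subgroup $K$ of a powerful group is forced to be metacyclic. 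I would exploit the fact that in a powerful $p$-group every $2$-generated subgroup is itself powerful (this is a standard hereditary-type property for powerfully embedded situations, or can be argued directly via $\mho_1$ and commutators), combined with the observation that a $2$-generated powerful $p$-group for $p$ odd is metacyclic. Both the Heisenberg group and \texttt{SmallGroup}(81,10) fail to be powerful, since for each the commutator subgroup is not contained in $\mho_1$; this immediately contradicts their sitting inside the powerful group $G_2$ in a way that would make them powerful, giving the desired contradiction.

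The step I expect to be the main obstacle is handling the fact that being powerful is not automatically inherited by arbitrary subgroups: a subgroup of a powerful $p$-group need not be powerful in general. So I cannot simply say ``$K$ is powerful hence metacyclic''. Instead I would lean on the specific $2$-generated situation: for $p$ odd, a $2$-generated subgroup of a powerful $p$-group is metacyclic. To make this precise I would either invoke the relevant result on powerful groups (every subgroup generated by $\dG$ elements reflecting the global rank bound) or argue directly that $\dG(K)=2$ together with the powerful structure of $G_2$ forces $[K,K]\leq\mho_1(K)$, using \cref{lem:comm} to control how elements of order $p$ interact with $K$ and using \cref{lem:laffey} (in the $\Sigma$ case) to bound $\dG(K)$ by $\dG(\Omega_1(K))\leq 2$. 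The alternative, cleaner route is via \cref{lem:abel-basic}\ref{it:omega1}, which caps $|\Omega_1(K)|\leq p^2$, ruling out the Heisenberg group of exponent $p$ directly (it has $|\Omega_1|=p^3$) and similarly constraining \texttt{SmallGroup}(81,10); combining these arithmetic constraints on $\Omega_1$ with \cref{lem:min-nonmeta} should eliminate both exceptional cases without needing the full strength of inherited powerfulness.

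Concretely, the cleanest argument is likely the following: for $p$ odd, apply \cref{prop:powerful} to get $G_2$ powerful, so $\mho_1(G_2)=\Phi(G_2)$ and more generally the power map behaves multiplicatively enough that $\Omega_1(G_2)$ has exponent $p$; then \cref{lem:abel-basic}\ref{it:omega1.5} gives that $\Omega_1(G_2)$ is elementary abelian and \cref{lem:abel-basic}\ref{it:omega1} bounds $|\Omega_1(J)|\leq p^2$ for any $2$-generated $J$. The Heisenberg group of order $p^3$ has $|\Omega_1|=p^3>p^2$, so it cannot embed as such a $J$; for \texttt{SmallGroup}(81,10) I would check (either by the presentation in \cite{ZAX06} or directly) that its $\Omega_1$ is too large or that it conflicts with the squaring/power-map count coming from \cref{prop:iso-orders}\ref{it:ord1}-\ref{it:ord2}, exactly as in the final paragraph of the proof of \cref{lem:2meta}. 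Either way both cases of \cref{lem:min-nonmeta} are excluded, contradicting the minimality of $J$ and proving $K$ metacyclic.
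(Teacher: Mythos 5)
Your reduction to odd $p$ via \cref{lem:2meta}, the passage to a minimal non-metacyclic subgroup, and the exclusion of the Heisenberg group of order $p^3$ through the bound $|\Omega_1|\leq p^2$ of \cref{lem:abel-basic}\ref{it:omega1} all match the paper. The genuine gap is in the remaining case $J\cong\texttt{SmallGroup}(81,10)$, which is the heart of the odd-$p$ argument and which none of your suggested routes actually closes. That group has exponent $9$ and $\Omega_1$ of order exactly $9=p^2$ (in the presentation of \cite{ZAX06} it is $\gen{a^3,c}$; all $54$ elements outside $\gen{a,c}$ have order $9$), so it is \emph{not} ``similarly constrained'' by \cref{lem:abel-basic}\ref{it:omega1} --- the $\Omega_1$-size argument is silent here. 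Your primary route, ``every $2$-generated subgroup of a powerful group is powerful, and a $2$-generated powerful $p$-group is metacyclic,'' founders on the first claim, which is false in general, as you yourself acknowledge, and you supply no substitute. Your fallback, transplanting the endgame of \cref{lem:2meta}, would first require showing that $\Omega_1(G_2)$ centralizes $J$ --- in the $2$-group case this took a bespoke analysis of automorphisms of $\texttt{SmallGroup}(32,32)$ --- and then a new fiber count for cubes; you carry out neither step. The paper exploits powerfulness of the ambient group quite differently: after reducing to $\exp(G_2)=9$ via \cref{prop:iso-orders}\ref{it:ord3}, cubing is a homomorphism $\rho:G_2\to\mho_1(G_2)$, whence $K\leq\rho^{-1}(\mho_1(K))=\Omega_1(G_2)\gen{a}$, and Dedekind's lemma forces $K=\gen{a}\Omega_1(K)$, of order $27\neq 81$.

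A secondary point: to invoke \cref{lem:min-nonmeta} for the minimal non-metacyclic subgroup $J$ you need $\dG(J)=2$, which is not automatic (a priori $J$ could be, say, $C_p^3$). The paper secures this, in both the $\Sigma$ and the $\tilde\Sigma$ cases, from \cref{rmk:laffey}: Laffey's theorem applies because $\Omega_1$ is elementary abelian by \cref{lem:abel-basic}\ref{it:omega1.5} and $p$ is odd. You cite \cref{lem:laffey} only for the $\Sigma$ case and leave the $\tilde\Sigma$ case unaddressed.
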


\begin{proof}
Let $K$ be a subgroup of $G_2$ such that $\dG(K)=2$.
Since $K$ is minimally $2$-generated, $G_2$ is not cyclic and therefore neither is $G_1$.
As a consequence of \cref{lem:2meta}, we assume that $p$ is odd. Since every subgroup $L$ of $K$ satisfies $\Omega_1(L)\leq \Omega_1(K)$, \cref{lem:abel-basic}\ref{it:omega1} together with \cref{rmk:laffey} ensure that $\dG(L)\leq \dG(\Omega_1(K))=2$.
For a contradiction, assume that $K$ is not metacyclic and, without loss of generality, assume also that $K$ is of minimal order with this property. It follows that $K$ is non-trivial and that every proper subgroup of $K$ is metacyclic. Then, \cref{lem:min-nonmeta}\ref{it:min2} combined with \cref{lem:abel-basic}\ref{it:omega1} yields that $p=3$ and $K\cong\texttt{SmallGroup(81,10)}$. 
We represent $K$ as in \cite[Thm.~2(3)]{ZAX06} (where $a$ and $b$ correspond to $s$ and $s_1$ from \cite[Thm.~3.2(iii)]{Bla61}):
\[
K=\gen{a,b,c \mid a^9=b^9=c^3=1, \ a^3=b^{-3}, \ [c,b]=1, \ [b,a]=c, \ [c,a]=a^3}.
\]
Now $G_2$ is powerful, thanks to \cref{prop:powerful}, and therefore
$\mho_2(G_2)=\{x\in G_2 \mid x^9=1\}$.  \cref{prop:iso-orders}\ref{it:ord3} yields then that it is not restrictive to assume that $G_2$ has exponent $9$, so we do. Thanks to $G_2$ being powerful, cubing defines a homomorphism $\rho:G_2\rightarrow\mho_1(G_2)$ and, since $\mho_1(K)=\gen{\rho(a)}$, we have that $\rho^{-1}(\mho_1(K))=\Omega_1(G_2)\gen{a}$. In particular $K$ is contained in $\Omega_1(G_2)\gen{a}$ and Dedekind's Lemma, combined with $G_2$ being powerful, implies 
\[
K=K\cap (\Omega_1(G_2)\gen{a})= \gen{a}(K\cap\Omega_1(G_2))=\gen{a}\Omega_1(K).
\]
This yields a contradiction, because $|\gen{a}\Omega_1(K)|=27\neq 81$.
% Then $|a|=|b|=9$ and $S=\{a,b\}$ is a $1$-simplex in $\Sigma(G_2)$ with the property that precisely $8$ elements of order $3$ in $G_2$ do not belong to a $2$-simplex including $S$. Let now $\gamma:\Sigma(G_2)\rightarrow\Sigma(G_2)$ be an isomorphism as in \cref{prop:iso-orders} and let $\tilde{\gamma}:\mathcal{G}(G_2)\rightarrow\mathcal{G}(G_1)$ be the corresponding isomorphism of the directed power graphs. Let $x=\gamma(a)$ and $y=\gamma(b)$ so that $\{x,y\}$ is a $1$-simplex of $\Sigma(G_1)$. Since $a^3=b^{-3}$ generates $\mho_1(K)$, the map $\tilde{\gamma}$ being an isomorphism, we derive that $T=\gen{x,y}$ is isomorphic to $C_9\times C_3$. {\color{red}As a result, there are precisely $18=27-9$ elements of $G_1$ of order $9$ that cannot be added to $T$ in order to form a $2$-simplex in $\Sigma(G_1)$. As a consequence, there are precisely $18$ elements of $G_2$ that cannot be added to $S$ in order to form a $2$-simplex in $\Sigma(G_2)$. This is a contradiction to the fact that in $K$ there are $72=81-9$ elements of order $9$.}
\end{proof}

\noindent
The following is a summary of the discussion around \cite[Thm.~3.1]{LM87}.

\begin{theorem}\label{thm:modular}
    Let $p$ be a prime number and let $G$ be a finite $p$-group. The following are equivalent:
    \begin{enumerate}[label=$(\arabic*)$]
        \item The group $G$ is modular.
        \item All subgroups of $G$ are powerful. 
        \item All $2$-generated subgroups of $G$ are powerful.
        \item All $2$-generated subgroups of $G$ are metacyclic.
    \end{enumerate}
\end{theorem}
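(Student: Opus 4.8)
The statement gathers standard facts about modular and powerful $p$-groups, so the plan is to assemble them into a single cycle of implications $(1)\Rightarrow(2)\Rightarrow(3)\Rightarrow(4)\Rightarrow(1)$, relying on Iwasawa's classification of modular $p$-groups (as presented in \cite{schmidt}) and on the theory of powerful $p$-groups developed in \cite{LM87}.

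For $(1)\Rightarrow(2)$ I would first recall that every sublattice of a modular lattice is modular, so that each subgroup of a modular $p$-group is again modular; it therefore suffices to show that a modular $p$-group is itself powerful. This can be read off Iwasawa's structure theorem: a nonabelian modular $p$-group $G$ admits an abelian normal subgroup $A$ with $G/A=\gen{bA}$ cyclic and $b^{-1}ab=a^{1+p^{s}}$ for all $a\in A$, where $s\geq 1$ if $p$ is odd and $s\geq 2$ if $p=2$. Computing $[a,b]=a^{p^{s}}$ then gives $[G,G]\leq\mho_1(G)$ for $p$ odd and $[G,G]\leq\mho_2(G)$ for $p=2$, which is exactly the defining condition of powerfulness. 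The implication $(2)\Rightarrow(3)$ is immediate, since $2$-generated subgroups are subgroups, and $(3)\Rightarrow(4)$ follows from the structure theory of \cite{LM87}: a $2$-generated powerful $p$-group has a $\mho$-series with cyclic factors and is hence metacyclic.

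The crux is $(4)\Rightarrow(1)$, and this is where I expect the main difficulty. The strategy is to exploit that modularity of a $p$-group can be detected on its $2$-generated subgroups, so that it is enough to prove each $2$-generated metacyclic subgroup is modular and then recombine this local information through Iwasawa's description to exclude a pentagon sublattice in $\mathcal{L}(G)$. For $p$ odd this step is clean, as a $2$-generated metacyclic $p$-group is automatically of the Iwasawa type above; the delicate point is $p=2$, where metacyclic $2$-generated groups such as $D_8$ or $Q_8$ need \emph{not} be modular, so one must appeal to the finer classification in \cite{LM87,schmidt} to rule out precisely these pieces. Managing this $p=2$ bookkeeping, rather than any single hard computation, is the real obstacle; once it is settled the cycle closes and the four conditions are seen to be equivalent.
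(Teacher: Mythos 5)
The paper offers no proof of this statement at all: it is presented as a summary of the discussion around \cite[Thm.~3.1]{LM87}, where the result is proved under the hypothesis that $p$ is \emph{odd}. Your cycle $(1)\Rightarrow(2)\Rightarrow(3)\Rightarrow(4)\Rightarrow(1)$ is essentially the standard argument for odd $p$: subgroups of modular groups are modular, Iwasawa's normal form gives $[a,b]=a^{p^s}$ and hence powerfulness, $2$-generated powerful groups are products of two cyclic subgroups and hence metacyclic, and for the return implication one localizes modularity to pairs of cyclic subgroups (for odd $p$ every metacyclic $p$-group is modular, so any two cyclic subgroups of $G$ permute, and Iwasawa's permutability criterion yields modularity of $G$). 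That last localization is the point you should make precise: ``no pentagon'' is not visibly a condition on $2$-generated subgroups, whereas permutability of $\gen{x}$ and $\gen{y}$ is a condition inside $\gen{x,y}$.

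The genuine gap is at $p=2$, and it is not bookkeeping: two of the implications you need are \emph{false} there, so the cycle cannot be closed for the statement as printed. For $(1)\Rightarrow(2)$, the Iwasawa normal form you invoke describes only the \emph{non-Hamiltonian} modular $p$-groups; the Hamiltonian $2$-groups $Q_8\times E$ are modular, yet $Q_8$ is not powerful since $[Q_8,Q_8]$ has order $2$ while $\mho_2(Q_8)=1$. For $(4)\Rightarrow(1)$, take $D_8=\gen{r,s\mid r^4=s^2=1,\ srs=r^{-1}}$: every subgroup of $D_8$ is metacyclic, so $(4)$ holds, but $D_8$ is neither modular (the triple $\gen{s}<\gen{r^2,s}$, $\gen{rs}$ violates the modular law) nor powerful, so $(4)$ implies neither $(1)$ nor $(3)$. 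You half-see this when you note that $D_8$ is metacyclic but not modular, but your plan to ``rule out precisely these pieces'' cannot work, because $D_8$ genuinely satisfies hypothesis $(4)$. (Also, $Q_8$ \emph{is} modular, contrary to your parenthetical; that is exactly why it breaks $(1)\Rightarrow(2)$ rather than $(4)\Rightarrow(1)$.) The correct course is to carry the hypothesis $p>2$ as in \cite{LM87}, or, for $p=2$, to restrict to non-Hamiltonian modular groups and adjust $(3)$ and $(4)$; in the paper's actual application the additional constraints of \cref{prop:noQ8} and \cref{lem:2meta} on $\Omega_1$ are what exclude the $Q_8$ and $D_8$ configurations.
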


\begin{theorem}\label{thm:ind-p}
Let $G_2$ be a finite $p$-group. The following are equivalent:
\begin{enumerate}[label=$(\arabic*)$]
\item\label{it:abp1} There exists a finite abelian $p$-group $G_1$ such that $\Sigma(G_1)\cong \Sigma(G_2).$
\item\label{it:abp1.5} There exists a finite abelian $p$-group $G_1$ such that $\tilde\Sigma(G_1)\cong\tilde\Sigma(G_2).$
\item\label{it:abp2} $G_2$ is modular and nonhamiltonian.
\item\label{it:abp3} There exists a finite abelian  $p$-group $G_1$ such that $G_1$ and $G_2$ have isomorphic subgroup lattices.
\end{enumerate}
\end{theorem}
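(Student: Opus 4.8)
The plan is to prove the four conditions equivalent by closing a short cycle of implications: I would establish $(3)\Rightarrow(4)$, then $(4)\Rightarrow(1)$ and $(4)\Rightarrow(2)$, and finally $(1)\Rightarrow(3)$ and $(2)\Rightarrow(3)$. Since $(3)\Rightarrow(4)\Rightarrow(1)\Rightarrow(3)$ and $(3)\Rightarrow(4)\Rightarrow(2)\Rightarrow(3)$, these five implications force all four conditions to be equivalent. The two implications out of $(4)$ are immediate from the lattice machinery already in place, the implication $(3)\Rightarrow(4)$ is a citation of the classification of modular $p$-groups, and the genuinely new input is the pair $(1)\Rightarrow(3)$ and $(2)\Rightarrow(3)$, which repackage the results proved earlier in this section.

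For $(4)\Rightarrow(1)$ and $(4)\Rightarrow(2)$ I would simply invoke \cref{prop:lattices}\ref{it:lat2}: two $p$-groups with isomorphic subgroup lattices have isomorphic independence complexes and isomorphic strong independence complexes. For $(3)\Rightarrow(4)$ I would appeal to the classification of finite modular $p$-groups due to Iwasawa together with Baer's construction: a modular nonhamiltonian $p$-group is lattice-isomorphic to an abelian $p$-group of the same order; cf.\ \cite[Thm.~2.5.9]{schmidt} and \cref{esempiouno}. This produces an abelian $G_1$ with $\mathcal{L}(G_1)\cong\mathcal{L}(G_2)$, which is exactly $(4)$.

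The heart of the argument is $(1)\Rightarrow(3)$ and $(2)\Rightarrow(3)$. In both cases $G_1$ is abelian and either $\Sigma(G_1)\cong\Sigma(G_2)$ or $\tilde\Sigma(G_1)\cong\tilde\Sigma(G_2)$. First I would show that $G_2$ is modular: by \cref{prop:2meta} every $2$-generated subgroup of $G_2$ is metacyclic, so modularity follows from the equivalence of conditions in \cref{thm:modular}. It then remains to exclude that $G_2$ is hamiltonian. A hamiltonian group contains a copy of $Q_8$ and hence has even order, so $G_2$ is automatically nonhamiltonian when $p$ is odd; for any $p$ it therefore suffices to rule out $Q_8\leq G_2$. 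In the strong case this is precisely \cref{prop:noQ8}. In the ordinary case I would use \cref{lem:laffey}: since $\dG(Q_8)=2$ while $\dG(\Omega_1(Q_8))=\dG(\ZG(Q_8))=1$, a subgroup isomorphic to $Q_8$ would violate the inequality $\dG(H)\leq\dG(\Omega_1(H))$. In either case $G_2$ is modular and nonhamiltonian, which is $(3)$.

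The main obstacle is not in this final assembly but upstream: essentially all the difficulty already sits in \cref{prop:2meta} (forcing $2$-generated subgroups of $G_2$ to be metacyclic) and in the $Q_8$-exclusion via \cref{prop:noQ8} and \cref{lem:laffey}, all of which are established. Within the present proof the only delicate point is the citation $(3)\Rightarrow(4)$: one must check that the classification statement invoked genuinely yields an \emph{abelian} lattice-partner and that ``nonhamiltonian'' is exactly the hypothesis under which Baer's construction applies, which is the precise content of \cite[Thm.~2.5.9]{schmidt}. Once the cycle is closed, the stated equivalence of $(1)$, $(2)$, $(3)$ and $(4)$ follows.
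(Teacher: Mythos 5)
Your proposal is correct and follows essentially the same route as the paper: the cycle $(1)/(2)\Rightarrow(3)\Rightarrow(4)\Rightarrow(1),(2)$, with \cref{prop:2meta} plus \cref{thm:modular} giving modularity, \cref{lem:laffey} (ordinary case) and \cref{prop:noQ8} (strong case) excluding $Q_8$ and hence hamiltonicity, Baer's construction (\cite[Thm.~2.5.9/2.5.10]{schmidt}) for $(3)\Rightarrow(4)$, and \cref{prop:lattices}\ref{it:lat2} for $(4)\Rightarrow(1),(2)$. This matches the paper's proof in both structure and the lemmas invoked.
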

\begin{proof}
\noindent \ref{it:abp1}-\ref{it:ab1.5} $\implies$ \ref{it:abp2}. It follows from  \cref{thm:modular} that $G_2$ is modular if and only if all 2-generated subgroups of $G_2$ are metacyclic.  So we apply \cref{prop:2meta} to deduce that $G_2$ is modular.  Moreover, \cref{lem:laffey} and \cref{prop:noQ8} imply that $G_2$ has no subgroup isomorphic to the quaternion group, and therefore $G_2$ is not hamiltonian.

%\noindent \ref{it:abp1.5} $\implies$ \ref{it:abp2}. We derive from \cref{prop:2meta} that $G_2$ is modular and that is nonhamiltonian from \cref{prop:noQ8}. \Mima{se applichiamo il suggerimento di sopra possiamo tagliare questa parte}

\noindent  \ref{it:abp2} $\implies$ \ref{it:abp3}.
This follows from \cite[Theorem 2.5.10]{schmidt}.

\noindent  \ref{it:abp3} $\implies$ \ref{it:abp1}-\ref{it:abp1.5}.
This follows from \cref{cor:lattices}.
\end{proof}

\noindent
The following is \cite[Thm.~14]{Iwa41}; see also Theorem 2.3.1 in \cite{schmidt}. Combined with this result, \cref{thm:ind-p} yields a full classification of prime power order groups whose independence complex is isomorphic to that of an abelian group. 

\begin{theorem}
    Let $p$ be a prime number and let $G$ be a finite $p$-group. Then the following are equivalent: 
    \begin{enumerate}[label=$(\arabic*)$]
        \item $G$ is modular nonhamiltonian.
        \item There exist a normal abelian subgroup $A$ of $G$, an element $b\in G$ and a positive integer $s$ such that the following hold:
        \begin{itemize}
            \item $G=A\gen{b}$, 
            \item for all $a\in A$, one has $b^{-1}ab=a^{1+p^s}$, and
            \item if $p=2$, then $s\geq 2$.
        \end{itemize}
    \end{enumerate}
\end{theorem}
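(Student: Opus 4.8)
The plan is to prove the equivalence as a structure theorem, treating the two implications separately and freely using \cref{thm:modular}, which reduces modularity of a finite $p$-group to the local condition that every $2$-generated subgroup be metacyclic. For the direction $(2)\Rightarrow(1)$, assume $G=A\gen{b}$ with $A$ abelian and normal, $b^{-1}ab=a^{1+p^s}$ for all $a\in A$, and $s\geq 2$ when $p=2$. The map $a\mapsto a^{1+p^s}$ is a power automorphism of $A$, so it fixes every subgroup of $A$ setwise and each such subgroup is normal in $G$. To obtain modularity I would show that all subgroups of $G$ permute, i.e.\ $HK=KH$ for all $H,K\leq G$: writing each subgroup via its normal part $K\cap A$ together with a cyclic piece in the $b$-direction, the uniformity of the power action makes products symmetric, and permutability of all pairs forces $\mathcal{L}(G)$ to be modular, since permutable pairs are modular pairs. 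Nonhamiltonicity is immediate for $p$ odd, where no hamiltonian $p$-group exists (by Dedekind's theorem a hamiltonian group must contain $Q_8$); for $p=2$ the hypothesis $s\geq 2$ rules out the inversion action $a\mapsto a^{-1}$ (which on an element of order $4$ is $a\mapsto a^{1+2}$, the case $s=1$) characteristic of $Q_8$, so $G$ is not a direct product of $Q_8$ with an elementary abelian $2$-group and hence not hamiltonian.

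For the direction $(1)\Rightarrow(2)$, assume $G$ modular and nonhamiltonian, and let $A$ be a maximal abelian normal subgroup. Since $G$ is nilpotent we have $C_G(A)=A$, so $G/A$ embeds faithfully into $\Aut(A)$. The key step is to prove that $G$ acts on $A$ by power automorphisms: using modularity together with \cref{thm:modular} one shows that every subgroup of $A$ is normal in $G$, whence the image of $G/A$ lies in the group $\mathrm{Pow}(A)$ of power automorphisms of $A$. For $p$ odd, Cooper's classification of power automorphisms gives $\mathrm{Pow}(A)\cong(\Z/p^e\Z)^\times$, where $p^e$ is the exponent of $A$, whose $p$-part is cyclic and generated by $a\mapsto a^{1+p}$; hence $G/A$ is cyclic. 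Choosing $b$ to be a suitable power of a generator of $G/A$ then normalizes the exponent to the exact form $a\mapsto a^{1+p^s}$, producing the required triple $(A,b,s)$.

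The case $p=2$ follows the same outline but is markedly more delicate, and this is where I expect the real difficulty to concentrate. Here $\mathrm{Pow}(A)$ is no longer cyclic---it contains inversion---and Cooper's theorem itself carries $2$-group exceptions, so ensuring that $G/A$ is cyclic and that its generator acts as $a\mapsto a^{1+2^s}$ with $s\geq 2$ both rely on the nonhamiltonicity hypothesis to exclude the quaternion (inversion) behaviour. More fundamentally, the genuinely hard part of the whole theorem is not the final bookkeeping of exponents but the structural assertion that modularity forces $G$ to be abelian-by-cyclic with the top group acting by a single uniform power map rather than by an arbitrary order-preserving automorphism: proving that every subgroup of the maximal abelian normal subgroup is $G$-invariant and that the quotient is cyclic is the technical core of Iwasawa's analysis. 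A complete proof must either carry out this power-automorphism analysis in full or, as the present paper does, cite \cite{Iwa41}.
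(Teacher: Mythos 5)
First, a point of comparison: the paper does not prove this statement at all. It is quoted as Iwasawa's classification of modular nonhamiltonian finite $p$-groups, with explicit references to \cite[Thm.~14]{Iwa41} and \cite[Thm.~2.3.1]{schmidt}. So there is no in-paper argument to measure yours against, and your closing observation---that a complete proof must either carry out the power-automorphism analysis in full or cite Iwasawa---is exactly the position the paper takes.

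As a proof, however, your text has genuine gaps located at precisely the points where the theorem is hard. In $(2)\Rightarrow(1)$, the sentence ``the uniformity of the power action makes products symmetric'' is not an argument: one must actually verify $HK=KH$ for arbitrary subgroups $H=(H\cap A)\gen{a_1b^i}$ and $K=(K\cap A)\gen{a_2b^j}$, and this computation is where the hypothesis $s\geq 2$ for $p=2$ does real work (with $s=1$ one obtains, e.g., $D_8$, which is not modular, and $Q_8$, which is hamiltonian). In $(1)\Rightarrow(2)$, the pivotal assertion that modularity forces every subgroup of a maximal abelian normal subgroup $A$ to be normal in $G$---equivalently, that $G/A$ lands in the power automorphisms of $A$---is introduced with ``one shows'' and no indication of how; modularity is a purely lattice-theoretic hypothesis and does not directly yield normality of anything, and it is not even clear that an \emph{arbitrary} maximal abelian normal subgroup can serve as the $A$ of the normal form (in \cite{schmidt} the subgroup is constructed with care, after several preparatory lemmas that constitute the bulk of the proof). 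The subsequent claim that $G/A$ is cyclic, and the whole case $p=2$ where the group of power automorphisms is noncyclic and nonhamiltonicity must be used to exclude inversion, are flagged by you as delicate but not addressed. The surrounding bookkeeping (Cooper's theorem, $C_G(A)=A$ for a maximal abelian normal subgroup of a $p$-group, renormalizing the exponent to $1+p^s$ by passing to a suitable power of $b$, and the Dedekind-type argument for nonhamiltonicity) is correct. As submitted, this is an accurate road map to Iwasawa's theorem rather than a proof of it.
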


\begin{corollary}\label{cor:indp}
        Let $p$ be a prime number and let $G_1$ and $G_2$ be finite $p$-groups. 
Assume that $G_1$ is abelian. Then the following are equivalent:
\begin{enumerate}[label=$(\arabic*)$]
    \item\label{it:corp1} $\Sigma(G_1)\cong\Sigma(G_2)$.
    \item\label{it:corp1.5} $\tilde\Sigma(G_1)\cong\tilde\Sigma(G_2)$.
    \item\label{it:corp2} $G_1$ and $G_2$ have isomorphic subgroup lattices.
\end{enumerate}
\end{corollary}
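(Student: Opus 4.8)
The plan is to treat \ref{it:corp2} as the hub of the equivalence and to route both complex conditions to it through \cref{thm:ind-p}. First I would dispatch the easy implications \ref{it:corp2} $\implies$ \ref{it:corp1} and \ref{it:corp2} $\implies$ \ref{it:corp1.5}: since $G_1$ and $G_2$ are $p$-groups, any lattice isomorphism between them is automatically index-preserving, so \cref{prop:lattices}\ref{it:lat2} delivers both $\Sigma(G_1)\cong\Sigma(G_2)$ and $\tilde\Sigma(G_1)\cong\tilde\Sigma(G_2)$ at once.

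For the reverse direction, assume $\Sigma(G_1)\cong\Sigma(G_2)$ (the case $\tilde\Sigma(G_1)\cong\tilde\Sigma(G_2)$ will run in parallel). Because $G_1$ is an abelian $p$-group, \cref{thm:ind-p} (implication \ref{it:abp1} $\implies$ \ref{it:abp3}) produces \emph{some} abelian $p$-group $A$ with $\mathcal{L}(A)\cong\mathcal{L}(G_2)$. Feeding this lattice isomorphism back into \cref{prop:lattices}\ref{it:lat2} yields $\Sigma(A)\cong\Sigma(G_2)$, and therefore $\Sigma(G_1)\cong\Sigma(A)$. At this point the problem reduces to comparing the two abelian groups $G_1$ and $A$: if I can show $G_1\cong A$, then $\mathcal{L}(G_1)\cong\mathcal{L}(A)\cong\mathcal{L}(G_2)$, which is exactly \ref{it:corp2}. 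The strong-independence implication \ref{it:corp1.5} $\implies$ \ref{it:corp2} would follow the same template, this time using \ref{it:abp1.5} $\implies$ \ref{it:abp3} of \cref{thm:ind-p} and the strong half of \cref{prop:lattices}\ref{it:lat2} to arrive at $\tilde\Sigma(G_1)\cong\tilde\Sigma(A)$.

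To close the gap $G_1\cong A$, I would invoke the elementary fact that a finite abelian $p$-group is determined up to isomorphism by its element-order statistics. Concretely, writing such a group as $\bigoplus_i C_{p^{e_i}}$ with $e_1\geq\cdots\geq e_k$, one has $|\Omega_j|=p^{\sum_i\min(e_i,j)}$, and successive differences of these exponents recover the numbers $\#\{i:e_i\geq j\}$, i.e.\ the conjugate of the partition $(e_i)$, hence the type. By \cref{prop:iso-orders}\ref{it:ord1}, the hypothesis $\Sigma(G_1)\cong\Sigma(A)$ (respectively $\tilde\Sigma(G_1)\cong\tilde\Sigma(A)$) forces $G_1$ and $A$ to have equally many elements of each order, so $G_1\cong A$ and the argument concludes.

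I expect the real content to be the observation that \cref{thm:ind-p} only supplies \emph{an} abelian group lattice-equivalent to $G_2$, whereas it is the complex isomorphism --- via the order statistics of \cref{prop:iso-orders}\ref{it:ord1} --- that pins down this group as $G_1$ itself. The main obstacle is thus precisely this identification step, upgrading ``there exists an abelian witness with the correct lattice'' to ``the given $G_1$ has the correct lattice''; everything else is a direct assembly of the cited results.
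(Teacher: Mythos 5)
Your proposal is correct and follows essentially the same route as the paper: both directions from \ref{it:corp2} come from \cref{prop:lattices}\ref{it:lat2}, and the converse produces an abelian witness via \cref{thm:ind-p}, transfers the complex isomorphism to it, and then identifies that witness with $G_1$. The only difference is cosmetic: where you identify the two abelian $p$-groups by the elementary $\Omega_j$-counting argument from the element-order statistics of \cref{prop:iso-orders}\ref{it:ord1}, the paper instead cites the theorem of Cameron--Ghosh that finite abelian groups with isomorphic power graphs are isomorphic.
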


\begin{proof}
    \ref{it:corp1} $\implies$ \ref{it:corp2}. Thanks to \cref{thm:ind-p}, there exists an abelian $p$-group $G_3$ such that $G_2$ and $G_3$ have isomorphic subgroup lattices. It follows from \cref{cor:lattices} that $\Sigma(G_1)\cong\Sigma(G_2)\cong\Sigma(G_3)$ and so %\cref{prop:iso-orders}\ref{it:ord1} yields 
    that $G_1$ and $G_3$ have isomorphic power graphs. Since power graphs are isomorphism invariants for the family of finite abelian groups, cf.\ \cite[Thm.~1]{cpg1}, we derive that $G_1\cong G_3$ and so $G_1$ and $G_2$ have isomorphic subgroup lattices. The proof of \ref{it:corp1.5} $\implies$ \ref{it:corp2} is analogous.

\noindent
    \ref{it:corp2} $\implies$ \ref{it:corp1}-\ref{it:corp1.5}. This is \cref{cor:lattices}.
\end{proof}

% \begin{corollary}\label{cor:pgps-to-star}
%             Let $p$ be a prime number and let $G_1$ and $G_2$ be finite $p$-groups. 
% Assume that $G_1$ is abelian and that $\Sigma(G_1)\cong\Sigma(G_2)$. Then $\tilde\Sigma(G_1)\cong\tilde\Sigma(G_2)$.
% \end{corollary}

% \begin{proof}
%     Combine \cref{cor:indp} and \cref{prop:lattices}.
% \end{proof}

\begin{theorem}\label{thm:ind}
Let $G_2$ be a finite group. The following are equivalent:
\begin{enumerate}[label=$(\arabic*)$]
    \item\label{it:ab1} There  exists a finite abelian group $G_1$ such that $\Sigma(G_1)\cong \Sigma(G_2).$
     \item\label{it:ab1.5} There  exists a finite abelian group $G_1$ such that $\tilde\Sigma(G_1)\cong\tilde\Sigma(G_2).$
    \item\label{it:ab2} $G_2$ is nilpotent
    and its Sylow subgroups are modular and nonhamiltonian.
  \item\label{it:ab3}  There exists a finite abelian group $G_1$ such that $G_1$ and $G_2$ have the same order and isomorphic subgroup lattices.
\end{enumerate}
\end{theorem}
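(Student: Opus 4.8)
The plan is to treat this statement as the globalization of the prime-power case \cref{thm:ind-p}, reducing each of the four conditions to the individual Sylow subgroups of $G_2$ by exploiting nilpotency. Note first that, since an abelian $G_1$ is nilpotent, each of \ref{it:ab1}, \ref{it:ab1.5}, \ref{it:ab3} already forces $G_2$ to be nilpotent by \cref{prop:nilpotent}; so in all cases I may write $G_1=\prod_p S_p$ and $G_2=\prod_p T_p$ for the Sylow decompositions, with every $S_p$ abelian. Two structural facts about a finite nilpotent group $G$ with Sylow $p$-subgroup $P$ will be used throughout: (a) $\mathcal{L}(G)\cong\prod_p\mathcal{L}(P)$ as lattices; and (b) the full subcomplex of $\Sigma(G)$ (respectively $\tilde\Sigma(G)$) spanned by the elements of $p$-power order is exactly $\Sigma(P)$ (respectively $\tilde\Sigma(P)$). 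Fact (b) for $\Sigma$ holds because, for a set $X$ of $p$-elements, $\gen{X\setminus\{x\}}\le P$ and $x\in P$, so independence is the same computed in $G$ or in $P$; for $\tilde\Sigma$ it holds because any $K\le G$ containing such an $X$ contains it inside its Sylow $p$-subgroup $K_p\le P$, and $\dG(K)=\max_q\dG(K_q)\ge\dG(K_p)$, so strong independence is again unchanged.

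For \ref{it:ab1}$\implies$\ref{it:ab2} (and verbatim for \ref{it:ab1.5}$\implies$\ref{it:ab2}) I first dispose of the case $G_1$ cyclic, in which $G_2$ is cyclic as well and \ref{it:ab2} is immediate. Otherwise, \cref{prop:iso-general} (respectively \cref{prop:iso-general-tilde}) guarantees that the bijection $\varphi\colon G_1\to G_2$ induced by the isomorphism of complexes sends each $S_p$ onto the Sylow $p$-subgroup $T_p$, and hence carries $p$-elements to $p$-elements. By fact (b), $\varphi$ then restricts to an isomorphism $\Sigma(S_p)\cong\Sigma(T_p)$ (respectively $\tilde\Sigma(S_p)\cong\tilde\Sigma(T_p)$). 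Since $S_p$ is abelian, \cref{thm:ind-p} shows that $T_p$ is modular and nonhamiltonian; as this holds for every $p$ and $G_2$ is nilpotent, we obtain \ref{it:ab2}.

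Conversely, \ref{it:ab2} yields \ref{it:ab1}, \ref{it:ab1.5}, and \ref{it:ab3} in one stroke: by \cref{thm:ind-p} each $T_p$ admits an abelian $p$-group $A_p$ with $\mathcal{L}(A_p)\cong\mathcal{L}(T_p)$, an isomorphism that is automatically index-preserving because both are $p$-groups. Setting $G_1=\prod_p A_p$, which is abelian and of order $|G_2|$, fact (a) assembles these into an index-preserving lattice isomorphism $\mathcal{L}(G_1)\cong\mathcal{L}(G_2)$, giving \ref{it:ab3}; and \cref{prop:lattices}\ref{it:lat1} upgrades this to isomorphisms of both $\Sigma$ and $\tilde\Sigma$, giving \ref{it:ab1} and \ref{it:ab1.5}. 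It remains to close the cycle with \ref{it:ab3}$\implies$\ref{it:ab2}: here $G_2$ is nilpotent and $\mathcal{L}(G_2)\cong\mathcal{L}(G_1)\cong\prod_p\mathcal{L}(S_p)$ is modular (abelian groups have modular lattices, and modularity is inherited by products and by direct factors), so each $T_q$ is modular; nonhamiltonicity of each $T_q$ will follow once one knows that $T_q$ is lattice-isomorphic to an abelian group, which is where the Sylow subgroups on the two sides must be matched.

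The step I expect to be the main obstacle is precisely this matching in \ref{it:ab3}$\implies$\ref{it:ab2}. Because the subgroup lattice of a cyclic group $C_{p^n}$ is just a chain and does not remember $p$, a lattice isomorphism between nilpotent groups may permute the primes attached to cyclic Sylow factors, so one cannot naively conclude that $T_q$ is lattice-isomorphic to an abelian group of the same prime. To control this I would invoke either the Krull–Schmidt uniqueness of direct decompositions of modular lattices of finite length, or the explicit theory of projectivities of finite nilpotent groups from \cite{schmidt}, which maps Sylow subgroups to Sylow subgroups; combined with Schmidt's classification of the $p$-groups that are lattice-isomorphic to abelian ones, this forces each $T_q$ to be nonhamiltonian. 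The other point requiring care is the verification of fact (b) for the strong independence complex, where one must control $\dG(K)$ for subgroups $K$ of $G$ that are not $p$-groups.
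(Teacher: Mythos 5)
Your proposal is correct and follows the same overall strategy as the paper: reduce to Sylow subgroups via \cref{prop:nilpotent}, \cref{prop:iso-general} and \cref{prop:iso-general-tilde}, apply \cref{thm:ind-p} prime by prime, and reassemble. The only structural difference is how the cycle is closed: the paper proves \ref{it:ab2}$\implies$\ref{it:ab3} by citing \cite[Thms.~2.5.9--2.5.10]{schmidt} and then \ref{it:ab3}$\implies$\ref{it:ab1}--\ref{it:ab1.5} by gluing the complex isomorphisms $f_i:\Sigma(P_i)\to\Sigma(Q_i)$ coordinatewise, whereas you prove \ref{it:ab2}$\implies$\ref{it:ab3}$\implies$\ref{it:ab1}--\ref{it:ab1.5} by assembling an index-preserving lattice isomorphism and invoking \cref{prop:lattices}\ref{it:lat1}, and you close with \ref{it:ab3}$\implies$\ref{it:ab2} instead; both routes are valid, and yours is arguably cleaner since \cref{prop:lattices} does the gluing for you. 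Your explicit ``fact (b)'' (that the full subcomplex on the $p$-elements of a nilpotent group is $\Sigma(P)$, resp.\ $\tilde\Sigma(P)$) is a worthwhile justification of a step the paper leaves implicit in the phrase ``induces an isomorphism between $\Sigma(P_1)$ and $\Sigma(P_2)$''. As for the obstacle you flag in \ref{it:ab3}$\implies$\ref{it:ab2}: it is a genuine subtlety, but note that the paper's own proof of \ref{it:ab3}$\implies$\ref{it:ab1} silently makes the same assumption (``$P$ and $Q$ have isomorphic subgroup lattices''), and it is resolvable exactly as you suggest. In fact, for your purposes the full prime-matching is not needed: modularity of each $T_q$ follows from modularity of the whole lattice, and for nonhamiltonicity it suffices that $T_q$ be lattice-isomorphic to \emph{some} abelian group. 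This follows from the fact that each $\mathcal{L}(T_q)$ is directly indecomposable (a group whose subgroup lattice decomposes directly is a direct product of subgroups of coprime orders, by \cite[Thm.~1.6.5]{schmidt}) together with uniqueness of direct decompositions of the finite modular lattice $\mathcal{L}(G_2)\cong\prod_p\mathcal{L}(S_p)$; alternatively, one can observe directly that no abelian group has an interval in its subgroup lattice isomorphic to $\mathcal{L}(Q_8)$, since in an abelian group any subgroup with a unique minimal subgroup is cyclic and hence gives a chain. So there is no gap that would sink the argument.
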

\begin{proof}
\ref{it:ab1} $\implies$ \ref{it:ab2}. 
Suppose $\Sigma(G_2)\cong \Sigma(G_1)$, with $G_1$ abelian.  Then $G_2$ is nilpotent, by \cref{prop:nilpotent}\ref{it:propS1}.  If $G_1$ is cyclic then $G_1\cong G_2.$
Otherwise, by \cref{prop:iso-general}, the isomorphism $\Sigma(G_1)\rightarrow\Sigma(G_2)$ induces, for every prime $p$,
an isomorphism between $\Sigma(P_1)$ and $\Sigma(P_2)$, where 
$P_1$ and 
$P_2$ are, respectively, the Sylow $p$-subgroups of $G_1$ and $G_2.$ Thanks to \cref{thm:ind-p}, the group $P_2$ is modular and nonhamiltonian.

\noindent \ref{it:ab1.5} $\implies$ \ref{it:ab2}. Suppose $\tilde\Sigma(G_2)\cong \tilde\Sigma(G_1)$, with $G_1$ abelian.  Then $G_2$ is nilpotent, by \cref{prop:nilpotent}\ref{it:propS2},
and, by \cref{prop:iso-general-tilde}, the isomorphism $\tilde\Sigma(G_1)\rightarrow\tilde\Sigma(G_2)$ induces, for every prime $p$,
an isomorphism between $\tilde\Sigma(P_1)$ and $\tilde\Sigma(P_2)$, where 
$P_1$ and 
$P_2$ are, respectively, the Sylow $p$-subgroups of $G_1$ and $G_2.$ Thanks to \cref{thm:ind-p}, the group $P_2$ is modular and nonhamiltonian.

\noindent \ref{it:ab2} $\implies$ \ref{it:ab3}. 
There exists a finite abelian group $G_1$ such that the subgroup lattices of $G_1$ and $G_2$ are isomorphic thanks to \cite[Thm.~2.5.10]{schmidt}. The group $G_1$ can be taken so that $|G_1|=|G_2|$; see \cite[Thm.~2.5.9]{schmidt}, on which the proof of \cite[Thm.~2.5.10]{schmidt} is based.

\noindent \ref{it:ab3} $\implies$ \ref{it:ab1}-\ref{it:ab1.5}.
Fix $G_1$ as in \ref{it:ab3}: we show that $\Sigma(G_1)\cong \Sigma(G_2)$. By \cref{prop:nilpotent}\ref{it:propL1}, the group $G_2$ is itself nilpotent and  so \cite[Thm.~4.2.7]{schmidt} ensures that,
if $p$ is a prime number and $P$ and $Q$ are Sylow $p$-subgroups of $G_1$ and $G_2$ respectively, then $P$ and $Q$ have isomorphic subgroup lattices. It follows from \cref{thm:ind-p} that $\Sigma(P)\cong\Sigma(Q)$. 
Calling $P_1,\ldots,P_s$ and $Q_1,\ldots,Q_s$ the Sylow subgroups of $G_1$ and $G_2$ respectively, we have isomorphisms $f_i:\Sigma(P_i)\rightarrow \Sigma(Q_i)$.
Using now the fact that $G_1$ and $G_2$ are nilpotent, 
the bijection 
\[
G=P_1\times\ldots\times P_s\longrightarrow G_2=Q_1\times \ldots \times Q_s, \quad (x_1,\ldots,x_s) \longmapsto (f_1(x_1), \ldots,f_s(x_s))
\]
induces an isomorphism 
$\Sigma(G_1)\rightarrow\Sigma(G_2)$. The proof for strong independence complexes is analogous.
\end{proof}

\noindent
In the language of \cite[Thm.~7]{Su56}, the groups from \cref{thm:ind}\ref{it:ab2} are called quasi-Hamiltonian. 
The following corollaries give negative answers to \cref{qs:lattices} and \cref{qs:strong} for complexes over abelian groups.

\begin{corollary}\label{cor:ind}
        Let $G_1$ and $G_2$ be finite groups. 
Assume that $G_1$ is abelian. Then the following are equivalent:
\begin{enumerate}[label=$(\arabic*)$]
    \item\label{it:cor1} $\Sigma(G_1)\cong\Sigma(G_2)$.
    \item\label{it:cor2} There exists an index-preserving isomorphism $\mathcal{L}(G_1)\rightarrow\mathcal{L}(G_2)$.
\end{enumerate}
\end{corollary}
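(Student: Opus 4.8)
The plan is to obtain the equivalence directly from the tools assembled in this section, treating the two implications separately. The implication \ref{it:cor2} $\implies$ \ref{it:cor1} is immediate: an index-preserving isomorphism $\mathcal{L}(G_1)\to\mathcal{L}(G_2)$ induces an isomorphism $\Sigma(G_1)\to\Sigma(G_2)$ by \cref{prop:lattices}\ref{it:lat1}. The substance of the corollary is therefore the converse.

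For \ref{it:cor1} $\implies$ \ref{it:cor2}, I would first reduce to the Sylow subgroups. Assuming $\Sigma(G_1)\cong\Sigma(G_2)$ with $G_1$ abelian, \cref{prop:nilpotent}\ref{it:propS1} forces $G_2$ to be nilpotent, and since isomorphic complexes force $|G_1|=|G_2|$, the two groups share the same set of prime divisors. If $G_1$ is cyclic then so is $G_2$ (as remarked after \cref{prop:sigma-to-graph}), whence $G_1\cong G_2$ and the identity is an index-preserving projectivity; so I may assume $G_1$ is noncyclic. Then, exactly as in the proof of \cref{thm:ind}, \cref{prop:iso-general} shows that the vertex bijection $\varphi$ carries each Sylow $p$-subgroup $P$ of $G_1$ onto a Sylow $p$-subgroup $Q$ of $G_2$ and restricts to an isomorphism $\Sigma(P)\cong\Sigma(Q)$.

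With the local complexes matched up, I would invoke \cref{cor:indp}: since $P$ is abelian and $\Sigma(P)\cong\Sigma(Q)$, the $p$-groups $P$ and $Q$ have isomorphic subgroup lattices, and any such isomorphism $\alpha_P\colon\mathcal{L}(P)\to\mathcal{L}(Q)$ is automatically index-preserving because it involves a finite $p$-group; cf.\ \cite[Lem.~4.2.1]{schmidt}. It remains to glue the local isomorphisms $\alpha_{P_1},\ldots,\alpha_{P_s}$, one for each prime, into a single global one. Here I would use the fact that, for a nilpotent group, being the direct product of its Sylow subgroups of pairwise coprime orders, every subgroup $H$ decomposes as $H=\prod_i(H\cap P_i)$, so that $\mathcal{L}(G_1)\cong\prod_i\mathcal{L}(P_i)$ and likewise $\mathcal{L}(G_2)\cong\prod_i\mathcal{L}(Q_i)$. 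The product map $\prod_i\alpha_{P_i}$ is then a lattice isomorphism $\mathcal{L}(G_1)\to\mathcal{L}(G_2)$, and it preserves indices because each factor does and $|H|=\prod_i|H\cap P_i|$.

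I expect the only real subtlety to lie in this final gluing step: one must be sure that the decomposition $\mathcal{L}(G)\cong\prod_i\mathcal{L}(P_i)$ holds as an isomorphism of lattices (a consequence of the coprimality of the Sylow orders in a nilpotent group) and that index-preservation is inherited componentwise. Everything else is a bookkeeping assembly of \cref{prop:nilpotent}, \cref{prop:iso-general}, \cref{cor:indp}, and \cref{prop:lattices}; no new group-theoretic input beyond the direct-product structure of the subgroup lattice should be needed.
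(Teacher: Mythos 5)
Your proposal is correct and follows essentially the same route as the paper: reduce to the noncyclic case, use \cref{prop:nilpotent} and \cref{prop:iso-general} to match Sylow subgroups and their independence complexes, apply \cref{cor:indp} locally, and glue the resulting (automatically index-preserving) projectivities via the direct-product decomposition of the subgroup lattice of a nilpotent group. The only cosmetic difference is that for \ref{it:cor2}~$\implies$~\ref{it:cor1} you invoke \cref{prop:lattices}\ref{it:lat1} directly, which is a slightly more economical citation than the paper's appeal to the proof of \cref{thm:ind}, but the content is identical.
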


\begin{proof}
    \ref{it:cor1} $\implies$ \ref{it:cor2}. If $G_1$ is cyclic then $G_1\cong G_2$ and so \ref{it:cor2} is easily seen to hold. Assume now that $G_1$ is not cyclic. By \cref{prop:nilpotent}\ref{it:propS1}, the group $G_2$ is nilpotent and $\Sigma(G_1)\cong\Sigma(G_2)$ ensures, thanks to \cref{prop:iso-general}, for every prime $p$, that $\Sigma(P_1)$ and $\Sigma(P_2)$ are isomorphic, where
$P_1$ and 
$P_2$ are, respectively, the Sylow $p$-subgroups of $G_1$ and $G_2$. Thanks to \cref{cor:indp}, the groups $P_1$ and $P_2$ have isomorphic subgroup lattices. By lifting the isomorphisms corresponding to the different prime numbers, we obtain an index-preserving isomorphism $\mathcal{L}(G_1)\rightarrow\mathcal{L}(G_2)$

\noindent
    \ref{it:cor2} $\implies$ \ref{it:cor1}. The proof of \ref{it:ab3} $\implies$ \ref{it:ab1} in \cref{thm:ind} actually covers this stronger statement.
\end{proof}

\begin{corollary}\label{cor:general-to-star}
             Let $G_1$ and $G_2$ be finite groups. 
Assume that $G_1$ is abelian and that $\Sigma(G_1)\cong\Sigma(G_2)$. Then $\tilde\Sigma(G_1)\cong\tilde\Sigma(G_2)$.
\end{corollary}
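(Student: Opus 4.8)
The plan is to chain together the two immediately preceding results, \cref{cor:ind} and \cref{prop:lattices}\ref{it:lat1}, since between them they already carry all of the necessary content. The key observation is that \cref{cor:ind} produces not merely an abstract isomorphism of subgroup lattices, but an \emph{index-preserving} one, and index-preservation is precisely the hypothesis that \cref{prop:lattices}\ref{it:lat1} needs in order to control the passage to the strong independence subcomplex.

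Concretely, first I would invoke the implication \ref{it:cor1} $\implies$ \ref{it:cor2} of \cref{cor:ind}: from the assumptions that $G_1$ is abelian and $\Sigma(G_1)\cong\Sigma(G_2)$, we obtain an index-preserving lattice isomorphism $\alpha\colon\mathcal{L}(G_1)\rightarrow\mathcal{L}(G_2)$. Next I would feed $\alpha$ into \cref{prop:lattices}\ref{it:lat1}, which guarantees that such an $\alpha$ induces an isomorphism $\Sigma(G_1)\rightarrow\Sigma(G_2)$ whose restriction carries $\tilde\Sigma(G_1)$ onto $\tilde\Sigma(G_2)$. In particular $\tilde\Sigma(G_1)\cong\tilde\Sigma(G_2)$, which is the desired conclusion.

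There is essentially no obstacle internal to this two-line argument: the entire difficulty has already been absorbed into the proof of \cref{cor:ind} (which itself rests on \cref{thm:ind-p} and the structural analysis of modular nonhamiltonian $p$-groups) and into the lattice-theoretic bookkeeping of \cref{prop:lattices}\ref{it:lat1}. The one point to be careful about is to cite the index-preserving version of the lattice isomorphism rather than a bare projectivity, because the passage from independence to \emph{strong} independence is exactly what can fail for non-index-preserving lattice isomorphisms, as \cref{ex:42} illustrates.
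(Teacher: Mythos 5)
Your argument is correct and is exactly the paper's proof: \cref{cor:ind} upgrades $\Sigma(G_1)\cong\Sigma(G_2)$ to an index-preserving lattice isomorphism, and \cref{prop:lattices}\ref{it:lat1} then transports the strong independence complex. Nothing further is needed.
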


\begin{proof}
    Combine \cref{cor:ind} and \cref{prop:lattices}.
\end{proof}

\section{When independent sets are strongly independent}\label{sec:ind-strongind}

\noindent
In this section we give a complete answer to \cref{qs:Q3}, i.e.\ we classify the finite groups $G$ for which $\Sigma(G)=\tilde\Sigma(G)$. Recall that this last condition is equivalent
to \eqref{eq:Q3}: 
\[
 H\leq K\leq G \quad \Longrightarrow \quad \mG(H)\leq \dG(K),
\]
where $\dG(G)$ and $\mG(G)$ denote the minimum and the maximum cardinality of a minimal generating set of $G$, respectively.
We will use this fact without further mention.  

\subsection{Monotone groups and the basis property}

\begin{definition}
Let $G$ be a finite group. Then 
\begin{itemize}
    \item $G$ is a \emph{$\B$-group} if $\dG(G)=\mG(G)$.
    \item $G$ has the \emph{basis property} if every subgroup of $G$ is a $\B$-group.
    \item $G$ is \emph{monotone} if $H\leq K\leq G$ implies that $\dG(H)\leq \dG(K)$.
\end{itemize}
\end{definition}

\noindent
We refer the reader to \cite{AK14} for $\mathcal{B}$-groups, to \cite[App.~A]{AK14}
(see also \cite{McdBQ11}) for groups with the basis property, and to \cite{CM12,Mann05,Mann11} for monotone groups.

\begin{proposition}\label{prop:Q3toMonBasis}
Let $G$ be a finite group. The following are equivalent:
\begin{enumerate}[label=$(\arabic*)$]
    \item $\Sigma(G)=\tilde\Sigma(G)$.
    \item $G$ is a monotone group with the basis property.  
\end{enumerate}
\end{proposition}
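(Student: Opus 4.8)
The plan is to route everything through the reformulation already recorded in the text. By \cref{rmk:independences} the equality $\Sigma(G)=\tilde\Sigma(G)$ is equivalent to condition \eqref{eq:Q3}, namely that $H\leq K\leq G$ forces $\mG(H)\leq \dG(K)$. Once (1) is replaced by this purely lattice-theoretic statement, both implications reduce to short manipulations of the invariants $\dG$ and $\mG$. The only nontrivial input is the tautological inequality $\dG(H)\leq\mG(H)$, valid for every finite group $H$ since the minimum of a nonempty set of cardinalities is at most its maximum.

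For the implication (1) $\Rightarrow$ (2), I would first extract the basis property by specializing \eqref{eq:Q3} to the diagonal case $K=H$: this yields $\mG(H)\leq\dG(H)$, which, combined with $\dG(H)\leq\mG(H)$, gives $\dG(H)=\mG(H)$. Thus every subgroup $H$ is a $\B$-group, so $G$ has the basis property. Monotonicity is then immediate: for $H\leq K\leq G$, applying \eqref{eq:Q3} gives $\mG(H)\leq\dG(K)$, and chaining with $\dG(H)\leq\mG(H)$ produces $\dG(H)\leq\dG(K)$.

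For the converse (2) $\Rightarrow$ (1), I would fix $H\leq K\leq G$ and reconstruct \eqref{eq:Q3} directly: the basis property applied to the subgroup $H$ gives $\mG(H)=\dG(H)$, while monotonicity gives $\dG(H)\leq\dG(K)$, whence $\mG(H)=\dG(H)\leq\dG(K)$, as required. This closes the equivalence.

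I expect no genuine obstacle here beyond correctly invoking the equivalence of (1) with \eqref{eq:Q3}; the entire content is that the two clauses of (2) exactly package the two uses of \eqref{eq:Q3}, with the diagonal case $K=H$ encoding the basis property and the comparison of minimal generator numbers encoding monotonicity, glued together by the trivial bound $\dG\leq\mG$. The only point demanding care is keeping $\dG$ and $\mG$ distinct and applying the trivial inequality in the correct direction at each step.
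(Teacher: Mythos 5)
Your argument is correct and coincides with the paper's own proof: both directions route through the reformulation of $\Sigma(G)=\tilde\Sigma(G)$ as \eqref{eq:Q3}, extract the basis property by taking $K=H$, and obtain monotonicity (respectively reconstruct \eqref{eq:Q3}) via the inequality $\dG(H)\leq\mG(H)$ and the equality $\dG(H)=\mG(H)$ for $\B$-groups. No further comment is needed.
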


\begin{proof}
    Assume first that $G$ satisfies $\Sigma(G)=\tilde\Sigma(G)$. Taking $H=K$ in \eqref{eq:Q3} one immediately derives that $G$ has the basis property. Now for every subgroup $H$ of $G$ one has $\dG(H)=\mG(H)$ and so \eqref{eq:Q3} rewrites as
    \[
    H\leq K\leq G \quad \Longrightarrow \quad \dG(H)\leq \dG(K)
    \]
    so $G$ is monotone. 

    Assume now that $G$ is monotone with the basis property and let $H\leq K\leq G$. Then, $G$ being monotone, we have $\dG(H)\leq \dG(K)$ and, $H$ being a $\B$-group, we deduce $\mG(H)\leq \dG(K)$. The choice of $H$ and $K$ being arbitrary, $G$ satisfies \eqref{eq:Q3}.
\end{proof}

\begin{lemma}\label{lem:Q3quotients}
    Let $G$ be a finite group with $\Sigma(G)=\tilde\Sigma(G)$ and let $H$ and $N$ be subgroups of $G$, with $N$ normal. Then $\Sigma(H)=\tilde\Sigma(H)$ and $\Sigma(G/N)=\tilde\Sigma(G/N)$. 
\end{lemma}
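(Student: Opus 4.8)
The plan is to work directly with the characterization that $\Sigma=\tilde\Sigma$ means every independent set is strongly independent, rather than routing through the monotone/basis-property reformulation. The single tool I will need is the elementary observation that independence is preserved under lifting: if $\pi\colon G\to \bar G$ is a surjective homomorphism and $\{\bar a_1,\dots,\bar a_t\}$ is independent in $\bar G$, then for \emph{any} choice of preimages $a_i\in\pi^{-1}(\bar a_i)$ the set $\{a_1,\dots,a_t\}$ is independent in $G$. This is immediate: a relation $a_j\in\gen{a_i : i\neq j}$ would project to $\bar a_j\in\gen{\bar a_i : i\neq j}$, contradicting independence of the images. I will record this first.

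For the subgroup statement I would argue that it is essentially automatic. Let $Y\subseteq H$ be independent; since for $y\in Y$ the subgroup $\gen{Y\setminus\{y\}}$ is the same whether formed inside $H$ or inside $G$, the set $Y$ is independent in $G$ as well, hence strongly independent by hypothesis. Then for every $K$ with $Y\subseteq K\leq H$ we have $K\leq G$, so $|Y|\leq \dG(K)$; thus $Y$ is strongly independent in $H$. As the reverse containment $\tilde\Sigma(H)\subseteq\Sigma(H)$ always holds, this gives $\Sigma(H)=\tilde\Sigma(H)$.

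The quotient statement is the substantive part. Fix an independent subset $\bar X=\{\bar x_1,\dots,\bar x_t\}$ of $G/N$ and an arbitrary subgroup $\bar K=K/N$ containing it; I must show $t\leq \dG(\bar K)$. The naive move of lifting the $\bar x_i$ to elements of $K$ and applying strong independence in $G$ only yields $t\leq\dG(K)$, which is the wrong bound. The key idea, and the main obstacle to finesse, is to pull $\bar X$ back into a subgroup of $G$ whose generator number is controlled by $\dG(\bar K)$ rather than by $\dG(K)$. Concretely, set $d=\dG(K/N)$, pick $w_1,\dots,w_d\in K$ whose images generate $K/N$, and let $L=\gen{w_1,\dots,w_d}$, so that $LN=K$ and $\dG(L)\leq d$. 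Because $K/N=LN/N$, each $\bar x_i$ has a representative $l_i\in L$; by the lifting observation $\{l_1,\dots,l_t\}$ is independent in $G$, hence strongly independent by hypothesis, and since it is contained in $L$ we obtain $t\leq\dG(L)\leq d=\dG(\bar K)$. This proves $\bar X$ is strongly independent in $G/N$, and as $\bar X$ and $\bar K$ were arbitrary we conclude $\Sigma(G/N)=\tilde\Sigma(G/N)$.
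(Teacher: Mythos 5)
Your proof is correct, but it takes a genuinely different route from the paper. The paper deduces the lemma in one line from \cref{prop:Q3toMonBasis}: the condition $\Sigma(G)=\tilde\Sigma(G)$ is equivalent to $G$ being a monotone group with the basis property, and the closure of these two properties under subgroups and quotients is then quoted from the literature (\cite[Prop.~1.1]{AK14} for the basis property and \cite[Prop.~1]{Mann11} for monotonicity). You instead verify the defining condition directly: the subgroup case follows because independence and strong independence are tested against the same family of overgroups, and the quotient case is handled by your lifting trick --- replacing the full preimage $K$ of a subgroup $\bar K=K/N$ by a $\dG(\bar K)$-generated subgroup $L$ with $LN=K$, lifting the independent set into $L$, and invoking strong independence in $G$ relative to $L$. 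This is exactly the right move; the naive lift into $K$ would only give the useless bound $t\le\dG(K)$. What the paper's route buys is brevity and a connection to known structural results; what yours buys is a self-contained, elementary argument that does not lean on the external references (in effect you re-prove, in this specific setting, the quotient-closure statements being cited). Both are complete proofs; yours could serve as a replacement if one wanted the paper to be independent of \cite{AK14} and \cite{Mann11} at this point.
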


\begin{proof}
By \cref{prop:Q3toMonBasis}, a finite group satisfying \eqref{eq:Q3} is the same as a monotone group with the basis property. The last properties are easily seen to be preserved by subgroups and are preserved by quotients thanks to \cite[Prop.~1.1]{AK14} and \cite[Prop.~1]{Mann11}.
\end{proof}

\noindent
The following is a consequence of \cref{prop:Q3toMonBasis} and \cite[Thm.~1.6]{AK14}.

\begin{proposition}
    Let $G$ be a finite nilpotent group satisfying $\Sigma(G)=\tilde\Sigma(G)$. Then $G$ is a monotone group of prime power order.
\end{proposition}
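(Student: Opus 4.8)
The plan is to deduce the statement from \cref{prop:Q3toMonBasis} together with the structure theory of nilpotent groups. First I would apply \cref{prop:Q3toMonBasis}: the hypothesis $\Sigma(G)=\tilde\Sigma(G)$ is equivalent to $G$ being a monotone group with the basis property. In particular $G$ is already known to be monotone, so the entire content of the proposition reduces to proving that $|G|$ is a prime power.

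To establish this, I would suppose for a contradiction that two distinct primes $p\neq q$ both divide $|G|$. Since $G$ is nilpotent, it is the internal direct product of its Sylow subgroups; choosing an element $x$ of order $p$ in the Sylow $p$-subgroup and an element $y$ of order $q$ in the Sylow $q$-subgroup, these commute and satisfy $\gen{x,y}=\gen{x}\times\gen{y}\cong C_p\times C_q\cong C_{pq}$. This subgroup is cyclic, so $\dG(\gen{x,y})=1$; on the other hand $\{x,y\}$ is a minimal generating set of it, whence $\mG(\gen{x,y})\geq 2$. This contradicts the basis property, which forces $\dG(\gen{x,y})=\mG(\gen{x,y})$. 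Hence $|G|$ is a prime power, and combined with monotonicity this gives the claim. Equivalently, and more directly in line with the cited reference, one may invoke \cite[Thm.~1.6]{AK14}, which characterizes the finite nilpotent groups with the basis property as exactly the $p$-groups enjoying that property.

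I do not expect a real obstacle here: the reduction through \cref{prop:Q3toMonBasis} is immediate, and the coprime-commuting argument is elementary. The only point requiring a word of care is the verification that $\{x,y\}$ is genuinely a \emph{minimal} generating set of $\gen{x,y}$, which holds because each of $\gen{x}$ and $\gen{y}$ is a proper subgroup of $\gen{x,y}$. All of the substantive structural work is already packaged into \cref{prop:Q3toMonBasis} and the classification of \cite{AK14}.
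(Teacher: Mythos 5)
Your proof is correct and follows essentially the same route as the paper, which derives the statement directly from \cref{prop:Q3toMonBasis} together with \cite[Thm.~1.6]{AK14}. The only difference is that you additionally supply a short self-contained argument (a coprime-order commuting pair generating a cyclic subgroup violates the basis property) for the prime-power step that the paper delegates entirely to \cite{AK14}; that argument is valid.
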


\noindent
For $p$ an odd prime number, the monotone $p$-groups are classified by Mann; cf.\ \cite{Mann05, Mann11}. The monotone $2$-groups have been classified by Crestani and Menegazzo in \cite{CM12}.

\subsection{The non-nilpotent case}

\noindent
This section is dedicated to the proof of the following theorem: 

\begin{theorem}\label{thm:non-nilp}
 Let $G$ be a finite non-nilpotent group. The following are equivalent: 
 \begin{enumerate}[label=$(\arabic*)$]
     \item\label{it:main1} $\Sigma(G)=\tilde\Sigma(G)$.
     \item\label{it:main2} $G=PQ$ is a Frobenius group such that:
     \begin{enumerate}[label=$(\alph*)$]
        \item $P$ is a normal abelian $p$-subgroup of $G$, where $p$ is a prime;
        \item $Q$ is a cyclic $q$-subgroup of $G$, where $q\neq p$ is a prime;
        \item if $\alpha$ is a generator of $Q$, then exactly one of the following holds:  
       \begin{itemize}
           \item %there exists $a\in\F_p^\times$ such that, for every $x\in P/\Phi(P)$, one has $\alpha(x)=x^a$; 
           there exists $m\in\Z$ coprime to $p$ such that, for every $x\in P$, one has $x^{\alpha}=x^m$;
           \item\label{it:main2c} $P$ is homocyclic with $\dG(P)=2$ and $|\alpha|$ does not divide $p-1$.
       \end{itemize}
    \end{enumerate}
 \end{enumerate}
\end{theorem}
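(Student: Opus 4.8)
The plan is to exploit the condition \eqref{eq:Q3} together with its hereditary behaviour from \cref{lem:Q3quotients}. The easier implication is $\ref{it:main2}\implies\ref{it:main1}$: assuming $G=PQ$ has the stated Frobenius structure, I would verify \eqref{eq:Q3} directly by analyzing the subgroups $H\leq K\leq G$. Since $P$ is a normal abelian $p$-group and $Q$ is cyclic of prime power order, every subgroup of $G$ is (up to conjugacy) of the form $P_0 R$ with $P_0\leq P$ normalized by a subgroup $R\leq Q$. In the first case of (c), where $\alpha$ acts as a fixed power $m$ on all of $P$, every $\alpha$-invariant subgroup of $P$ is automatically normalized, so the generator counts of $H$ and $K$ are controlled by the abelian $p$-part plus at most one generator coming from $Q$; here one checks $\mG(H)\le \dG(K)$ by reducing to the abelian case (where independence and strong independence coincide, cf.\ \cref{sec:graphs}) and accounting for the single cyclic complement. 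The second case of (c), with $P$ homocyclic of rank $2$ and $|\alpha|\nmid p-1$, forces $\alpha$ to act irreducibly on $P/\Phi(P)\cong\F_p^2$, so $P$ has no proper nontrivial $\alpha$-invariant subgroup of rank $2$; this rigidity is what makes the monotonicity and basis-property bookkeeping go through.

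The substantial direction is $\ref{it:main1}\implies\ref{it:main2}$. First I would record, via \cref{prop:Q3toMonBasis}, that $G$ is monotone with the basis property, and recall from \cite[Thm.~2.4]{Cam24} that every element of $G$ has prime power order. Combined with non-nilpotency, I would invoke the known structure theory of groups with the basis property (the $\mathcal{B}$-group machinery of \cite{AK14} and \cite{McdBQ11}): such groups that are not nilpotent are Frobenius groups whose kernel $P$ and complement $Q$ are each of prime power order, giving (a) with $P$ a normal $p$-subgroup and $Q$ a $q$-group. That $P$ is abelian and $Q$ is cyclic should follow from applying the basis property and monotonicity to small subgroups: a non-cyclic complement $Q$ would contain an elementary abelian $q$-subgroup of rank $2$ violating the basis property against a cyclic overgroup, and non-abelianness of $P$ would be excluded similarly by testing \eqref{eq:Q3} on a minimal non-abelian subgroup of $P$.

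The crux is then the dichotomy in (c), and I expect this to be the main obstacle. Here the monotonicity condition must be pushed to control the $\alpha$-action on $P$. Writing $\bar P=P/\Phi(P)$ as an $\F_p[\alpha]$-module, the key is that \eqref{eq:Q3} forces strong constraints on the invariant subgroup structure: if $\langle x,\alpha\rangle$ has more generators than some larger subgroup it sits inside, monotonicity fails. I would analyze $\bar P$ as a module and show that either $\alpha$ acts as a scalar (yielding the first bullet, $x^\alpha=x^m$ for a common $m$), or $\bar P$ is a $2$-dimensional irreducible module, which by Schur and the condition $|\alpha|\nmid p-1$ gives the homocyclic rank-$2$ case of the second bullet. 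The delicate point is ruling out intermediate configurations --- reducible but non-scalar actions, or higher rank --- since these would produce invariant flags in $P$ along which the generator number fails to be monotone; tracking $\dG$ of the various $P_0\rtimes\langle\alpha^k\rangle$ against their overgroups, and ensuring the homocyclic (rather than merely having rank $2$) conclusion, is where the careful case analysis lives.
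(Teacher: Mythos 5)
Your overall skeleton --- reducing to ``monotone with the basis property'' via \cref{prop:Q3toMonBasis}, invoking the Apisa--Klopsch structure theory to obtain the Frobenius decomposition $G=PQ$, and analysing the $\alpha$-action on $P/\Phi(P)$ as an $\F_p[Q]$-module --- is the paper's route, and your treatment of $(2)\Rightarrow(1)$ matches \cref{prop:powers} and \cref{prop:coprime} in outline. One small point: cyclicity of $Q$ is already part of the characterization in \cref{prop:AK}, and your separate argument for it is wrong anyway, since an elementary abelian $q$-group of rank $2$ is a $\B$-group and violates nothing.

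The genuine gap is your exclusion of non-abelian $P$. You propose to ``test \eqref{eq:Q3} on a minimal non-abelian subgroup of $P$'', but this cannot work: every finite $p$-group has the basis property by the Burnside basis theorem, and many non-abelian $p$-groups are monotone (e.g.\ the Heisenberg group of order $p^3$), so no subgroup of $P$ taken in isolation produces a contradiction. The obstruction is visible only through the interaction with the $Q$-action --- compare \cref{ex:Q8}, where $P=Q_8$ is itself monotone with the basis property and the failure of \eqref{eq:Q3} for $P\rtimes C_3$ comes from $Q$ acting trivially on $\Phi(P)$. The paper's actual argument here is the technical heart of the proof: reduce to class $2$ via \cref{lem:Q3quotients}; show in \cref{prop:Q3->d2} that a scalar action on $P/\Phi(P)$ forces $P$ abelian, by comparing the exponent with which $\alpha$ acts on $\gamma_2(P)$ (the square) with that on $P/\Phi(P)$ against condition \ref{it:AK4} of \cref{prop:AK}; deduce metacyclicity of a hypothetical non-abelian $P$ from Mann's theorem on monotone $p$-groups (\cref{lem:mann}, \cref{lem:p-odd}, \cref{prop:metacyclic}); and then derive contradictions separately for $p=2$ (a characteristic subgroup of order $2$ fixed by $Q$) and for $p$ odd (Menegazzo's \cref{prop:aut-meta} forces $|Q|$ to divide $p-1$, contradicting simplicity of $P/\Phi(P)$ via \cref{lem:CH}). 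None of this is recoverable from your sketch. Likewise, in the dichotomy (c) you explicitly defer the elimination of reducible non-scalar actions and the homocyclicity of $P$; the paper settles the former with the inequality $t\,\dG(V)=\dG(P)\le\dG(G)\le t+1$ coming from monotonicity, and the latter by applying condition \ref{it:AK4} of \cref{prop:AK} to the sections $\mho_n(P)/\mho_{n+1}(P)$, whose faithful simple $Q$-action forces order $p^2$ when $|\alpha|$ does not divide $p-1$.
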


\begin{remark}\label{rmk:chars}
    Let $G$ be a finite group satisfying $\Sigma(G)=\tilde\Sigma(G)$. Thanks to \cref{thm:non-nilp}, the group $G$ is isomorphic to a semidirect product $P\rtimes_\varphi Q$ where $P$ is a $p$-group, $Q$ is a $q$-group, and the action $\varphi:Q\rightarrow\Aut(P)$ of $Q$ on $P$ is faithful. In particular, $Q$ can be viewed as a subgroup of $\Aut(P)$. Moreover, since the order of $Q$ is coprime to $p$, the canonical map $\Aut(P)\rightarrow\Aut(P/\Phi(P))$ induces an isomorphism of $Q$ with a subgroup $\overline{Q}$ of $\Aut(P/\Phi(P))$. If $\omega:\F_p^\times \rightarrow\Z_p^\times$ denotes the Teichm\"uller character and $P$ is abelian, we write $\chi:\F_p^\times \rightarrow\Aut(P)$ for the composition of $\omega$ with the map giving the $\Z_p$-module structure of $P$. Then condition \ref{it:main2c} can then be interpreted in the following way: 
\begin{itemize}
    \item the order of $Q$ divides $p-1$ and $\varphi(Q)\subseteq \chi(\F_p^\times)$; or  
    \item the order of $Q$ does not divide $p-1$ and there exists a positive integer $n$ such that $P\cong C_{p^n}\times C_{p^n}$.
\end{itemize}
For $P$ abelian, in the first case $\varphi(Q)$ consists of automorphisms of the form $x\mapsto x^m$, where $m$ is an integer coprime to $p$.
In the second case we still have that $|Q|=|\overline{Q}|$ divides $|\GL(2,p)|=p(p-1)^2(p+1)$.
\end{remark}

\noindent
%We dedicate the remaining part of the section to the proof of \cref{thm:non-nilp}.
In the following result, which is  \cite[Cor.~A.1]{AK14}, an \emph{$\F_p[Q]$-section}
of a $p$-group $P$, acted upon by another group $Q$, is an $\F_p[Q]$-module $H/N$, where $N\leq H\leq P$ are $Q$-invariant subgroups such that $N$ is normal in $H$ and $N$ contains $\Phi(H)$.

\begin{proposition}\label{prop:AK}
    Let $G$ be a finite non-nilpotent group. The following are equivalent: 
 \begin{enumerate}[label=$(\arabic*)$]
     \item $G$ has the basis property.
     \item $G=PQ$ where:
     \begin{enumerate}[label=$(\alph*)$]
        \item $P$ is a normal $p$-subgroup of $G$, where $p$ is a prime;
        \item $Q$ is a cyclic $q$-subgroup of $G$, where $q\neq p$ is a prime;
        \item\label{it:AK4} if $\Tilde{Q}$ is a non-trivial subgroup of $Q$ and $S$ is an $\F_p[\tilde{Q}]$ section of $P$, then the action of $\tilde{Q}$ on $S$ is faithful and $S$ is isomorphic to a direct sum of isomorphic copies of one simple module.
    \end{enumerate}
 \end{enumerate}
\end{proposition}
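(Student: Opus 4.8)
The plan is to prove both implications by translating the combinatorial $\B$-group condition ($\dG(K)=\mG(K)$ for every subgroup $K$) into structural and module-theoretic constraints, using throughout that the basis property passes to subgroups and quotients (as in the proof of \cref{lem:Q3quotients}). The pivotal elementary observation is that a cyclic group of non-prime-power order is not a $\B$-group: if $|s|=p$ and $|z|=q^c$ with $p\neq q$, then $C_{pq^c}=\gen{s}\times\gen{z}$ has $\dG=1$ but contains the minimal generating set $\{s,z\}$, so $\mG\geq 2$.

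\textbf{Forward implication, global shape.} From the observation above, in a basis-property group no nontrivial $p$-element commutes with a nontrivial $q$-element for distinct primes $p\neq q$; applying this to the commuting prime-power parts of a single element forces every element of $G$ to have prime-power order, i.e.\ $G$ is a CP-group. Since the non-solvable CP-groups are known simple groups, which contain subgroups violating the basis property, $G$ is solvable; Higman's classification of solvable CP-groups then gives, in the non-nilpotent case, that $|G|=p^aq^b$ and that $G$ is Frobenius with normal $p$-kernel $P$ and $q$-complement $Q$, yielding $(a)$. That $Q$ is cyclic rather than generalized quaternion, completing $(b)$, I would extract by applying the basis property to $Q$ together with its fixed-point-free action, ruling out the quaternion case.

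\textbf{Forward implication, condition $(c)$.} Fix a nontrivial $\tilde Q\leq Q$ and an $\F_p[\tilde Q]$-section $S=H/N$. Since $H\rtimes\tilde Q\leq G$ and its quotient $S\rtimes\tilde Q$ both inherit the basis property, it suffices to argue inside $M=S\rtimes\tilde Q$; as $q\neq p$, Maschke makes $S$ semisimple. For faithfulness: if some $z\in\tilde Q\setminus\{1\}$ centralized $S$, then for $0\neq s\in S$ the subgroup $\gen{s,z}\cong C_{pq^c}$ would again be a non-$\B$ cyclic group, a contradiction. For homogeneity: writing $\tilde Q=\gen{y}$, suppose $S$ has two non-isomorphic simple summands $S_1,S_2$ (pass to the corresponding quotient section if necessary). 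Using $\Hom_{\F_p[\tilde Q]}(S_1,S_2)=0$, the submodule generated by $s_1+s_2$ with $0\neq s_i\in S_i$ is all of $S_1\oplus S_2$, so $\{s_1+s_2,y\}$ is a minimal generating set of $M$ of size $2$, whereas $\{s_1,s_2,y\}$ is minimal of size $3$; this violates $\dG(M)=\mG(M)$, forcing $S$ to be isotypic.

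\textbf{Converse $(2)\Rightarrow(1)$ and the main obstacle.} Given the structure, an arbitrary $H\leq G$ meets $P$ in a $p$-group $P_0=H\cap P\trianglelefteq H$ with cyclic $q$-quotient, so by Schur--Zassenhaus $H=P_0\rtimes Q_0$ with $Q_0$ a cyclic $q$-group. If $Q_0=1$ then $H$ is a $p$-group, hence a $\B$-group by Burnside's basis theorem; otherwise condition $(c)$ makes $\overline{P_0}=P_0/\Phi(P_0)$ a faithful homogeneous $\F_p[Q_0]$-module, whose endomorphism ring is a field $D$ because $Q_0$ is cyclic. Thus $\overline{P_0}$ is a $D$-vector space, all of whose minimal module-generating sets have the common size $\dim_D\overline{P_0}$, and faithfulness forces any generating set of $H$ to contribute exactly one further generator to the cyclic quotient $H/P_0$, so $\dG(H)=\mG(H)$. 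I expect the main obstacle to lie precisely in this last bookkeeping of the converse together with the structural reduction in the forward direction: the two assertions of $(c)$ follow cleanly from exchange/matroid behaviour of homogeneous modules and the coprime-cyclic-subgroup trick, but proving solvability, pinning down the exact Frobenius shape, excluding generalized quaternion complements, and verifying that \emph{mixed} generators (elements with nontrivial $p$- and $q$-parts) cannot alter the size of a minimal generating set all require the full strength of the CP-group classification and of homogeneity with faithfulness.
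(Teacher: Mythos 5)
First, a point of comparison: the paper does not prove \cref{prop:AK} at all — it is imported verbatim as \cite[Cor.~A.1]{AK14} (with the underlying Frobenius structure going back to \cite{McdBQ11}), so there is no internal argument to measure you against; you are reconstructing a cited result. Within your reconstruction, the parts concerning condition $(c)$ are sound and essentially the standard arguments: the observation that $C_{pq^c}$ is not a $\B$-group gives both the CP-property of $G$ and the faithfulness of $\tilde Q$ on any section $S$ (via the quotient-of-a-subgroup $S\rtimes\tilde Q$), and the comparison of the minimal generating sets $\{s_1+s_2,y\}$ and $\{s_1,s_2,y\}$ of $S\rtimes\tilde Q$ correctly forces homogeneity.

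There are, however, three concrete gaps. (i) \emph{The structural reduction.} Higman's classification of solvable CP-groups also produces $2$-Frobenius groups (e.g.\ $\Sym(4)$), so "Frobenius with normal $p$-kernel and $q$-complement" does not follow from that classification alone; you must exclude the $2$-Frobenius case, and the solvability itself rests on the unverified assertion that every non-solvable CP-group contains a subgroup violating the basis property. (ii) \emph{Cyclicity of $Q$.} The proposed argument cannot work as stated: by the Burnside basis theorem every finite $q$-group — generalized quaternion ones included — is a $\B$-group and has the basis property, so "applying the basis property to $Q$" yields nothing. Ruling out $Q\cong Q_{2^n}$ requires working inside $G$ itself, e.g.\ showing $\dG(G)=2$ and then exhibiting a minimal generating set of size $3$ of the form $\{x, y_1, y_2\}$ with $x\in P\setminus\{1\}$ and $\{y_1,y_2\}$ a minimal generating set of $Q$; you indicate no such mechanism. (iii) \emph{The converse.} The equality $\dG(H)=\mG(H)$ for $H=P_0\rtimes Q_0$ with $Q_0\neq 1$ — in particular that every minimal generating set has exactly $\dG(P_0/\Phi(P_0))+1$ elements, no matter how its members are distributed between $P_0$ and the $q$-elements of $H$ — is precisely the content of the appendix of \cite{AK14}, and you explicitly leave it open. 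As it stands the proposal is a correct skeleton with the module-theoretic heart in place, but the group-theoretic reductions in the forward direction and the counting in the converse are not established.
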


%\noindent
% the following we use the notation from \cref{rmk:chars}. \Mima{a questo punto taglierei questa frase}

\begin{proposition}\label{prop:powers}
    Let $p, q$ be prime numbers. Let $P$ be a finite abelian $p$-group and let $Q=\gen{\alpha}$ be a $q$-subgroup of $\Aut(P)$ such that there exists $m\in\Z$ coprime to $p$ with the property that, for every $x\in P$, one has $x^{\alpha}=x^m$. Then $G=P\rtimes Q$ satisfies $\Sigma(G)=\tilde\Sigma(G)$. 
\end{proposition}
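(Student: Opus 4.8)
The plan is to deduce the statement from \cref{prop:Q3toMonBasis}, so that it suffices to show that $G$ is a monotone group with the basis property. We may first dispose of the degenerate case $Q=1$, where $G=P$ is abelian and the conclusion is \cite[Thm.~2.5]{Cam24}; thus I take $Q\neq 1$, and since $q\neq p$ the group $G=P\rtimes Q$ is a non-nilpotent Frobenius group with abelian kernel $P$ and cyclic complement $Q$. Throughout I would exploit the following consequence of the action being scalar: because $|Q|$ is coprime to $p$, reduction modulo $p$ identifies $Q$ with a subgroup of $\F_p^\times$; concretely, each $\beta=\alpha^i\in Q$ acts on every nonzero $\F_p$-section of $P$ as the scalar $\bar m^{\,i}$, whose order in $\F_p^\times$ equals $|\beta|$. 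Since $\bar m\neq 1$, this already shows that $Q$ acts faithfully and fixed-point-freely on $P$.

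For the basis property I would apply \cref{prop:AK}. Conditions $(a)$ and $(b)$ hold by hypothesis, so only $(c)$ needs checking. Let $\tilde Q=\gen{\beta}\neq 1$ be a subgroup of $Q$ and let $S$ be an $\F_p[\tilde Q]$-section of $P$. Then $S$ is an $\F_p$-vector space on which $\beta$ acts as a scalar of order exactly $|\tilde Q|$, so the action is faithful; and since the action is scalar, $S$ decomposes as a direct sum of one-dimensional (hence simple) $\F_p[\tilde Q]$-submodules, all isomorphic to the module on which $\beta$ acts by that scalar. This verifies $(c)$, and $G$ has the basis property.

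The remaining, and main, task is monotonicity, for which I would establish a generation formula. Given $K\leq G$, the conjugacy form of Schur--Zassenhaus lets me write $K=A\rtimes R$ with $A=K\cap P\leq P$ and $R$ a cyclic $q$-group; as $P$ is abelian, a short computation shows that every such $R$ acts on $P$, hence on $A$, by the same scalars as the corresponding elements of $Q$, so a generator of $R$ acts on $A$ by a scalar of order $|R|$, fixed-point-freely when $R\neq 1$. I claim that
\[
\dG(K)=\dG(A)+\varepsilon,\qquad \varepsilon=\begin{cases}1 & R\neq 1,\\ 0 & R=1.\end{cases}
\]
The upper bound is clear, since $\dG(A)$ generators of $A$ together with a generator of $R$ generate $K$. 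For the lower bound I would pass to $\bar K=K/\Phi(A)=V\rtimes R$ with $V=A/\Phi(A)$ elementary abelian (note $\Phi(A)$ is characteristic in the normal subgroup $A$, so $\dG(K)\geq\dG(\bar K)$), and argue there. In a generating set of $\bar K$, the images generate the cyclic $q$-group $R$, so one generator $g$ projects onto a generator of $R$; as $R$ is fixed-point-free on $V$, $\gen{g}$ is a complement conjugate to $R$, so after conjugating I may assume $g=t$ for a fixed generator $t$ of $R$ and reduce every other generator, by multiplying with a power of $t$, to lie in $V$. Because $t$ acts on $V$ by a scalar, the $\langle t\rangle$-submodule generated by these elements is just their $\F_p$-span, of dimension at most the number of remaining generators; generating $V$ (of dimension $\dG(A)$) therefore forces at least $\dG(A)+1$ generators.

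With this formula monotonicity is immediate: if $H\leq K$, writing $H=A_H\rtimes R_H$ and $K=A_K\rtimes R_K$, we have $A_H=H\cap P\leq A_K=K\cap P$, so $\dG(A_H)\leq\dG(A_K)$ (subgroups of an abelian $p$-group have no larger rank), while $R_H\neq 1$ forces $q\mid |K|$ and hence $R_K\neq 1$; thus $\dG(H)\leq\dG(K)$. Combining monotonicity with the basis property, \cref{prop:Q3toMonBasis} gives $\Sigma(G)=\tilde\Sigma(G)$. I expect the main obstacle to be the lower bound in the generation formula: one must control exactly how large a subgroup of $V$ a bounded number of generators can produce, and the key leverage is precisely that the complement acts by scalars, hence preserves every line of $V$, so conjugation contributes no new directions.
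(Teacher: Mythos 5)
Your proposal is correct and follows essentially the same route as the paper: reduce via \cref{prop:Q3toMonBasis} to showing monotonicity and the basis property, get the latter from \cref{prop:AK} (the scalar action makes every nonzero section faithful and a sum of isomorphic one-dimensional simples), and prove monotonicity by decomposing $K=P_K Q_K$ and comparing $\dG(K)$ with $\dG(K\cap P)$. The only difference is one of detail: the paper simply asserts the key identity $\dG(K)=\dG(K\cap P)+1$ for $K\not\leq P$, whereas you prove its lower bound via the Frattini quotient and the scalar action, which is a welcome addition rather than a divergence.
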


\begin{proof}
Write, for simplicity, $G=PQ$ where the action of $Q$ on $P/\Phi(P)$ is through power automorphisms. Since $q$ divides $p-1$,  the primes $p$ and $q$ are distinct. Relying on \cref{prop:Q3toMonBasis}, we show that $G$ is monotone and has the basis property. The second is guaranteed by \cref{prop:AK}\ref{it:AK4}, so we show monotonicity. 
To this end, note that $\dG(G)=\dG(P)+1$.
Let now $H\leq K$ be subgroups of $G$. Write $H=P_HQ_H$ where $P_H=H\cap P$ and $Q_H$ is a $q$-group; in the same way write $K=P_KQ_K$. Clearly $P_H\leq P_K$ and, $P$ being abelian, $\dG(P_H)\leq \dG(P_K)$. Moreover, if $K$ is contained in $P$, then so is $H$ and so $\dG(H)=\dG(P_H)\leq \dG(P_K)=\dG(K)$. If, on the other hand, $K$ is not contained in $P$, then $\dG(K)=\dG(P_K)+1\geq \dG(P_H)+1\geq \dG(H)$. This proves monotonicity and thus the proof is complete. 
\end{proof}

\noindent
The following result is a consequence of the Cayley-Hamilton Theorem.

\begin{lemma}\label{lem:CH}
    Let $p$ be a prime number and let $A$ be an element of $\GL(2,p)$. Then the following hold:
    \begin{enumerate}[label=$(\arabic*)$]
        \item\label{it:CH1} if $|A|$ divides $p-1$, then $A$ has at least one eigenvalue in $\F_p$.
        \item\label{it:CH2} if $|A|$ does not divide $p(p-1)$, then the characteristic polynomial of $A$ is irreducible.  
    \end{enumerate}
\end{lemma}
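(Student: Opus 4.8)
The plan is to read off everything from the eigenvalues of $A$, i.e.\ the roots of its characteristic polynomial $\chi_A(x)=x^2-\mathrm{tr}(A)\,x+\det(A)\in\F_p[x]$, which by the Cayley--Hamilton theorem annihilates $A$ (so the minimal polynomial of $A$ divides $\chi_A$). Since $\chi_A$ has degree $2$, its roots lie either in $\F_p$ or in the quadratic extension $\F_{p^2}$. The single arithmetic fact driving both parts is that, inside $\overline{\F_p}$, an element $\lambda$ satisfies $\lambda^{p-1}=1$ exactly when $\lambda\in\F_p^\times$, because $\F_p^\times$ is precisely the set of roots of $x^{p-1}-1$; recall also that $\F_{p^2}^\times$ is cyclic of order $p^2-1=(p-1)(p+1)$.

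For \ref{it:CH1}, I would argue directly on eigenvalues. If $|A|$ divides $p-1$, then $A^{p-1}=I$, so every eigenvalue $\lambda\in\overline{\F_p}$ of $A$ satisfies $\lambda^{p-1}=1$ (test the relation $A^{p-1}=I$ on an eigenvector). By the fact above, $\lambda\in\F_p$; hence in fact both eigenvalues of $A$ lie in $\F_p$, and in particular $A$ has at least one eigenvalue in $\F_p$.

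For \ref{it:CH2} I would prove the contrapositive: if $\chi_A$ is reducible over $\F_p$, then $|A|$ divides $p(p-1)$. Reducibility of a degree-$2$ polynomial forces a root in $\F_p$, and hence both eigenvalues $\lambda,\mu$ lie in $\F_p^\times$. Now I split according to whether $A$ is diagonalizable. If it is, then $A$ is conjugate to $\mathrm{diag}(\lambda,\mu)$ and $|A|=\mathrm{lcm}(|\lambda|,|\mu|)$ divides $|\F_p^\times|=p-1$. If it is not, then $A$ has a single eigenvalue $\lambda$ and is conjugate over $\F_p$ to the Jordan block $\left(\begin{smallmatrix}\lambda&1\\0&\lambda\end{smallmatrix}\right)$; a short induction gives $A^k=\left(\begin{smallmatrix}\lambda^k&k\lambda^{k-1}\\0&\lambda^k\end{smallmatrix}\right)$, so $A^k=I$ holds if and only if $|\lambda|\mid k$ and $p\mid k$, whence $|A|=p\,|\lambda|$ divides $p(p-1)$. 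In every case $|A|\mid p(p-1)$, contradicting the hypothesis of \ref{it:CH2}, so $\chi_A$ must be irreducible.

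The only genuinely computational point, and thus the step I would be most careful about, is the non-diagonalizable subcase of \ref{it:CH2}: one must first note that the repeated eigenvalue already lands in $\F_p$ (guaranteed by reducibility, so that the Jordan form is defined over $\F_p$) and then correctly compute the order of the scalar-plus-nilpotent matrix as $p\,|\lambda|$, using $\gcd(p,|\lambda|)=1$. Everything else reduces to the cyclic structure of $\F_p^\times$.
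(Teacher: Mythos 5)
Your proof is correct and complete. The paper itself gives no argument beyond the remark that the lemma ``is a consequence of the Cayley--Hamilton Theorem,'' and your eigenvalue analysis (roots of $x^{p-1}-1$ being exactly $\F_p^\times$, plus the order computation $p\,|\lambda|$ for the Jordan block) is precisely the standard elaboration of that remark, so there is nothing to reconcile.
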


\begin{proposition}\label{prop:coprime}
   Let $p,q$ be prime numbers. Let $P$ be a finite abelian $p$-group and let $Q$ be a cyclic subgroup of $\Aut(P)$ of order not dividing $p(p-1)$.  Assume that $P$ is homocyclic and that $\dG(P)=2$. 
     Then $G=P\rtimes Q$ satisfies $\Sigma(G)=\tilde\Sigma(G)$. 
\end{proposition}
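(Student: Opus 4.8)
\noindent
The plan is to verify the two conditions of \cref{prop:Q3toMonBasis}, namely that $G=P\rtimes Q$ is monotone and that it has the basis property. Throughout I take $Q=\gen{\alpha}$ to be a cyclic $q$-group with $q\neq p$, as in the setting of \cref{thm:non-nilp} in which this proposition is applied (that $Q$ be a $q$-group is in any case forced by \cref{prop:AK}, as otherwise $G$ could not have the basis property). Since $|Q|$ is coprime to $p$, reduction modulo $\Phi(P)$ embeds $Q$ into $\Aut(P/\Phi(P))\cong\GL(2,p)$; write $\bar\alpha$ for the image of $\alpha$. As $|\bar\alpha|=|\alpha|$ does not divide $p(p-1)$, \cref{lem:CH}\ref{it:CH2} shows that $\bar\alpha$ acts irreducibly on $P/\Phi(P)$.

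\noindent
The decisive structural step is to upgrade this to the statement that \emph{every} nontrivial subgroup of $Q$ acts irreducibly on $P/\Phi(P)$. I would argue that an irreducible $\bar\alpha$ generates a subgroup of a nonsplit maximal torus of $\GL(2,p)$, so its eigenvalues lie in $\F_{p^2}^{\times}$ and have order $|\alpha|=q^k$; hence $q^k\mid p^2-1$. Because $q^k\nmid p-1$, and for $q$ odd the prime $q$ cannot divide both $p-1$ and $p+1$, this forces $q^k\mid p+1$ and in particular $q\nmid p-1$. Thus no power $\bar\alpha^{q^{k-i}}$ has order dividing $p-1$, so by \cref{lem:CH}\ref{it:CH2} each nontrivial subgroup of $Q$ acts irreducibly as well. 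The consequence I want to extract is that $R=(\Z/p^n\Z)[\bar\alpha]$ is a Galois ring $\mathrm{GR}(p^n,2)$, a commutative local ring whose ideals form a single chain, over which $P$ is free of rank one; hence for every nontrivial $\tilde Q\leq Q$ the $\tilde Q$-invariant subgroups of $P$ are exactly the members of the chain $P=\mho_0(P)\geq\mho_1(P)\geq\cdots$, and each of them is again free of rank one over the relevant Galois ring.

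\noindent
Granting this, monotonicity reduces to the claim that every subgroup $H\leq G$ satisfies $\dG(H)\leq 2$: once this is known, $H\leq K$ forces $\dG(H)\leq\dG(K)$, since the only subgroups with $\dG=1$ are the cyclic ones. Writing $H=P_H\rtimes Q_H$ with $P_H=H\cap P$ (Schur--Zassenhaus), the case $Q_H=1$ is clear because $P_H\leq P$ has rank $2$; if $Q_H\neq 1$ then $P_H$ is $Q_H$-invariant, hence equals some $\mho_a(P)$, which is cyclic as an $R$-module, so $P_H=\gen{x}^{Q_H}$ for a single $x$ and $H=\gen{x,\beta}$ with $\beta$ a generator of $Q_H$; thus $\dG(H)=2$. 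For the basis property I would invoke \cref{prop:AK}: conditions $(a)$ and $(b)$ are immediate, and for \cref{prop:AK}\ref{it:AK4} the irreducibility of each nontrivial $Q_H$ forces every $\F_p[Q_H]$-section of $P$ to be one of the simple modules $\mho_a(P)/\mho_{a+1}(P)\cong\F_{p^2}$, on which $Q_H$ acts faithfully. With both properties in hand, \cref{prop:Q3toMonBasis} yields $\Sigma(G)=\tilde\Sigma(G)$.

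\noindent
The main obstacle is precisely the structural step of the second paragraph, that is, ruling out that some nontrivial subgroup $Q_H$ acts as a scalar. Such a $Q_H$ would satisfy $[P,Q_H]=P$, whereas any two-generated subgroup of $P\rtimes Q_H$ has commutator subgroup of rank at most one; this would force $\dG(P\rtimes Q_H)=3>2=\dG(G)$ and destroy monotonicity, hence the equality $\Sigma(G)=\tilde\Sigma(G)$. The divisibility $q^k\mid p+1$ is exactly what excludes scalar subgroups when $q$ is odd. The case $q=2$ is genuinely more delicate, since the unique involution of a nonsplit torus is the scalar $-\id$, and I expect it to require separate attention building on the structural constraints on $P$ recorded earlier in this section.
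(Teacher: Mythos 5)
For odd $q$ your argument is correct and follows essentially the paper's route: both proofs reduce to monotonicity plus the basis property via \cref{prop:Q3toMonBasis} and \cref{prop:AK}, and both rest on showing that every nontrivial subgroup of $Q$ acts irreducibly on each layer $\mho_n(P)/\mho_{n+1}(P)$, so that the $Q$-invariant subgroups of $P$ form the chain of the $\mho_n(P)$. Your eigenvalue computation ($q^k\mid p+1$, hence $q\nmid p-1$) makes explicit a step the paper dispatches with the bare assertion that ``the same applies to every nontrivial subgroup $\tilde Q$ of $Q$''; your Galois-ring description of the invariant subgroups replaces the paper's argument via surjectivity of $p$-th powering on the $\mho$-filtration; and your monotonicity argument (every subgroup of $G$ is at most $2$-generated, hence $\dG(H)\leq\dG(K)$ unless $K$ is cyclic) is actually cleaner than the paper's, which asserts $\dG(K)=\dG(P_K)+1$ for $K\not\leq P$ --- an identity that fails already for $K=G$ in this irreducible situation.

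The difficulty you flag at $q=2$ is genuine, but it is not a gap you can close: the statement is false there, and the paper's own proof hides this behind the unjustified opening remark that ``$q$ is odd''. Your obstruction is already decisive. Take $p=3$, $P=\F_9^{+}\cong C_3\times C_3$ and $Q=\F_9^{\times}\cong C_8$ acting by multiplication, so that $|Q|=8$ does not divide $p(p-1)=6$, $P$ is homocyclic with $\dG(P)=2$, and $G=P\rtimes Q$ satisfies $\dG(G)=2$. The unique involution of $Q$ is $-\id$, so $H=P\rtimes\gen{\alpha^4}$ is the generalized dihedral group of $C_3\times C_3$; any two of its elements generate a subgroup meeting $P$ in a cyclic group, so $\dG(H)=3$. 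A minimal generating triple of $H$ is then independent in $G$ but not strongly independent, and $\Sigma(G)\neq\tilde\Sigma(G)$. So rather than seeking a separate argument for $q=2$, you should record that the hypothesis must additionally exclude it (equivalently, require $q\nmid p-1$, so that no nontrivial element of $Q$ acts as a power automorphism) --- which is precisely the dichotomy your torus computation isolates, and which also affects the statement of \cref{thm:non-nilp}.
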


\begin{proof}
Write, for simplicity, $G=PQ$ and note that $q$ is odd and different from $p$. Moreover, since $P$ is abelian, we have $\Phi(P)=\mho_1(P)$ and, $P$ being homocyclic with $\dG(P)=2$, for $n$ a non-negative integer we have
\begin{equation}\label{eq:mho}
|\mho_n(P):\mho_{n+1}(P)|=\begin{cases}
p^2 & \textup{if } p^{n+1}\leq \exp(P), \\
1 & \textup{otherwise}.
\end{cases}
\end{equation}
Write $Q=\gen{\alpha}$ and $\overline{Q}=\gen{\overline{\alpha}}$ for the image of $Q$ under the canonical map $\Aut(P)\rightarrow\Aut(P/\Phi(P))\cong\GL(2,p)$. Since $|Q|$ does not divide $p(p-1)$, by \cref{lem:CH}\ref{it:CH2}, the $\F_p[Q]$-module $P/\Phi(P)$ is simple. Moreover, since $Q$ acts  on $P$ by automorphisms and $p$-th powering induces a surjective homomorphism $\mho_n(P)/\mho_{n+1}\rightarrow\mho_{n+1}(P)/\mho_{n+2}$, the equality in \eqref{eq:mho} ensures, for every nonnegative integer $n$ with $p^n\leq \exp(P)$, that  $\mho_n(P)/\mho_{n+1}(P)$ is a simple $\F_p[Q]$-module. 
The same applies to every nontrivial subgroup $\tilde{Q}$ of $Q$. 

Let now $\tilde{Q}\leq Q$ be non-trivial and let $H$ be a subgroup of $P$ that is $\tilde{Q}$-stable. Assume that $H\neq\{1\}$ and let $n$ be the smallest non-negative integer such that $H$ is contained in $\mho_n(P)$ but is not contained in $\mho_{n+1}(P)$. The induced action of $\tilde{Q}$ on $\mho_n(P)/\mho_{n+1}(P)$ being irreducible, we deduce that $\mho_n(P)=H\mho_{n+1}(P)$ and so \eqref{eq:mho} yields that $H=\mho_n(P)$. 

This shows in particular that the $\F_p[\tilde{Q}]$-sections of $P$ are precisely the quotients of the form $\mho_n(P)/\mho_{n+1}(P)$ and so \cref{prop:AK} yields that $G$ has the basis property. 

In view of \cref{prop:Q3toMonBasis}, we are only left with showing that $G$ is monotone. For this, 
let $H\leq K$ be subgroups of $G$. Write $H=P_HQ_H$ where $P_H=H\cap P$ and $Q_H$ is a $q$-group; in the same way write $K=P_KQ_K$. Then $P_H\leq P_K$ and, $P$ being abelian, $\dG(P_H)\leq \dG(P_K)$. Moreover, if $K$ is contained in $P$, then so is $H$ and so $\dG(H)=\dG(P_H)\leq \dG(P_K)=\dG(K)$. If, on the other hand, $K$ is not contained in $P$, then $\dG(K)=\dG(P_K)+1\geq \dG(P_H)+1\geq \dG(H)$. This proves monotonicity and thus the proof is complete.
\end{proof}

\begin{proposition}\label{prop:Q3->d2} 
    Let $G$ be a finite non-nilpotent group such that $\Sigma(G)=\tilde\Sigma(G)$. Write $G=PQ$ as in \cref{prop:AK} and $Q=\gen{\alpha}$. Then one of the following holds: 
    \begin{enumerate}[label=$(\arabic*)$]
        \item\label{it:d2.1} $\dG(P)=\dG(G)=2$ and $P/\Phi(P)$ is a simple $\F_p[Q]$-module.
        \item\label{it:d2.2}  $P$ is abelian and there exists $m\in\Z$ coprime to $p$ such that, for every $x\in P$, one has $x^{\alpha}=x^m$. 
    \end{enumerate}  
\end{proposition}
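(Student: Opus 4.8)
The plan is to combine the structural constraints from \cref{prop:AK} with the monotonicity coming from $\Sigma(G)=\tilde\Sigma(G)$ (via \cref{prop:Q3toMonBasis}), and to read off everything from the action of $Q$ on the Frattini quotient $P/\Phi(P)$. Throughout I write $\overline P=P/\Phi(P)$, viewed as an $\F_p[Q]$-module. By \cref{prop:AK}\ref{it:AK4} applied to $\tilde Q=Q$ and the section $\overline P$, the module $\overline P$ is isotypic, say $\overline P\cong V^r$ with $V$ a faithful simple $\F_p[Q]$-module; since $Q$ is cyclic, $D:=\End_{\F_p[Q]}(V)$ is a field $\cong\F_{p^e}$ with $e=\dim_{\F_p}V$, and $V$ is $1$-dimensional over $D$. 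In particular every cyclic $\F_p[Q]$-submodule of $\overline P$ is isomorphic to $V$, of $\F_p$-dimension $e$. The whole argument splits according to the value of $e$, and the first task is to bound it.

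First I would show $e\le 2$. Fix $x\in P\setminus\Phi(P)$ and set $M=\gen{x}^Q$, the $Q$-invariant subgroup generated by the $\alpha$-conjugates of $x$. Its image $\overline M$ in $\overline P$ is the cyclic submodule $\gen{\bar x}^Q\cong V$, so $\dim_{\F_p}\overline M=e$; since $\Phi(M)\le\Phi(P)$ (because $\mho_1(M)\gamma_2(M)\le\mho_1(P)\gamma_2(P)$), the quotient $M/\Phi(M)$ surjects onto $\overline M$ and hence $\dG(M)\ge e$. On the other hand $M\le\gen{x,\alpha}$, which is $2$-generated, so monotonicity gives $\dG(M)\le\dG(\gen{x,\alpha})\le 2$. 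Therefore $e\le 2$.

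Next, the case $e=2$ should give the first alternative \ref{it:d2.1}. If $r\ge 2$ I would choose $x,y\in P$ whose images are $D$-independent in $\overline P\cong D^r$, so that $N=\gen{x,y}^Q$ has $\overline N\cong V^2$ of $\F_p$-dimension $4$ and thus $\dG(N)\ge 4$; but $N\le\gen{x,y,\alpha}$ is $3$-generated, contradicting monotonicity. Hence $r=1$, so $\overline P=V$ is a simple ($2$-dimensional) module; equivalently the image $\bar\alpha\in\GL(2,p)$ has irreducible characteristic polynomial, consistent with \cref{lem:CH}. Finally $\gen{x}^Q$ maps onto all of $\overline P$, so $\gen{x,\alpha}=G$ and $\dG(G)=2=\dG(P)$, giving \ref{it:d2.1}.

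The remaining case $e=1$ should give the second alternative \ref{it:d2.2}, and this is where I expect the real work. Here $\alpha$ acts on $\overline P$ as a scalar $\lambda\in\F_p^\times$ of order $|Q|$, so in particular $p$ is odd. The first step is to prove that $\alpha$ is a power automorphism of $P$: for any $x$ the module $M/\Phi(M)$ is isotypic with $V$ as a composition factor, so $\alpha$ acts on it as the scalar $\lambda$; since the generators $x^{\alpha^i}$ of $M$ then have images $\lambda^i\,\overline x$ all lying on a single line, $M/\Phi(M)$ is $1$-dimensional, $M$ is cyclic, and comparing orders forces $x^\alpha\in\gen{x}$. The second step is to upgrade this to a \emph{universal} power automorphism: for $p$ odd every power automorphism of a $p$-group has the form $x\mapsto x^m$ for a fixed integer $m$ (a theorem of Cooper; see also \cite{schmidt}). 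The final step is the commutator computation: from $x^\alpha=x^m$ one gets $[x,y]^m=[x,y]^\alpha=[x^m,y^m]\equiv[x,y]^{m^2}\pmod{\gamma_3(P)}$, so $[x,y]^{m(m-1)}\in\gamma_3(P)$; as $m\not\equiv 0,1\pmod p$ the integer $m(m-1)$ acts invertibly on the $p$-group $\gamma_2(P)/\gamma_3(P)$, forcing $\gamma_2(P)=\gamma_3(P)$ and hence $P$ abelian. This yields \ref{it:d2.2}. The main obstacle is precisely this last case: the implication ``$\alpha$ is a universal power automorphism $\Rightarrow P$ abelian'' is what carries the argument, and it genuinely relies on universality (a non-uniform power map $x\mapsto x^{m(x)}$ would make the commutator identity vacuous), so the classical theorem on power automorphisms of $p$-groups for $p$ odd is the key external input.
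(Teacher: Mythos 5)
Your treatment of the dichotomy itself is essentially the paper's: the bound $\dG(M)\le\dG(\gen{x,\alpha})\le 2$ and its two-generator variant are the same monotonicity count that the paper packages as $t\dG(V)=\dG(P)\le\dG(G)\le t+1$, and your case $e=2$ correctly yields alternative \ref{it:d2.1}. The divergence, and the problem, is in the scalar case $e=1$. Your first step asserts that for \emph{any} $x\in P$ the section $M/\Phi(M)$, with $M=\gen{x}^Q$, ``is isotypic with $V$ as a composition factor''. This is justified only when $M\not\le\Phi(P)$, i.e.\ $x\notin\Phi(P)$: only then does $M/\Phi(M)$ surject onto a nonzero submodule of $P/\Phi(P)\cong V^r$. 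For $x\in\Phi(P)$, \cref{prop:AK}\ref{it:AK4} tells you merely that $M/\Phi(M)$ is isotypic for \emph{some} faithful simple module, which need not be $V$ and need not be one-dimensional, so you cannot conclude that $M$ is cyclic. You have therefore only shown $x^\alpha\in\gen{x}$ for $x\notin\Phi(P)$; since in a nonabelian $p$-group an element of $\Phi(P)$ need not be a power of an element outside $\Phi(P)$ (think of an extraspecial group of exponent $p$), this is strictly weaker than $\alpha$ being a power automorphism. Both subsequent steps need the full property: the universality theorem for odd $p$ requires every cyclic subgroup to be $\alpha$-invariant, and the identity $[x,y]^\alpha=[x,y]^m$ is the power property applied to an element of $\gamma_2(P)\subseteq\Phi(P)$ --- exactly the elements you have not controlled.

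The gap is repairable, and the repair is essentially what the paper does instead of invoking power automorphisms: pass to $P/\gamma_3(P)\mho_1(\gamma_2(P))$ (legitimate by \cref{lem:Q3quotients}), where the commutator map becomes a $Q$-equivariant bilinear map $P/\Phi(P)\times P/\Phi(P)\to\gamma_2(P)$. Since $\alpha$ acts on $P/\Phi(P)$ by the scalar $\lambda\ne 0,1$, it acts on $\gamma_2(P)$ by $\lambda^2\ne\lambda$, so for $g\notin\Phi(P)$ the $Q$-stable abelian subgroup $\gen{g}\gamma_2(P)$ has a Frattini section carrying two distinct eigenvalues, contradicting the isotypicality demanded by \cref{prop:AK}\ref{it:AK4}. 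This disposes of the nonabelian case directly; once $P$ is abelian, your own argument does work (every nontrivial element of an abelian $p$-group is a power of an element outside $\Phi(P)$, so $\alpha$ is a power automorphism, hence universal), recovering alternative \ref{it:d2.2}.
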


\begin{proof}
We prove \ref{it:d2.1} and \ref{it:d2.2} together. Let $V$ be a simple $\F_p[Q]$-module and $t$ a positive integer such that, as $\F_p[Q]$-modules, $P/\Phi(P)$ and $V^t$ are isomorphic; the existence of $V$ and $t$ is guaranteed by \cref{prop:AK}\ref{it:AK4}.  \cref{prop:Q3toMonBasis} yields that $G$ is monotone, so 
    \[
    t\dG(V)=\dG(P)\leq \dG(G)\leq t+1. 
    \]
    Only two cases can therefore occur:
    \begin{enumerate}[label=$(\alph*)$]
        \item $\dG(V)=1$ or
        \item $\dG(V)=2$ and $t=1$. 
    \end{enumerate}
It is clear that, in the second case, $\dG(P)=\dG(P/\Phi(P))=\dG(V)=2$, so we  now consider the first. Assume therefore that $\dG(V)=1$ and write $\overline{\alpha}$ for the non-trivial automorphism of $\overline{P}=P/\Phi(P)$ that is induced by $\alpha$. Let $m\in\Z$ be such that, for every $\overline{x}\in\overline{P}$, one has $\overline{\alpha}(\overline{x})=\overline{x}^m$. Then $m$ does not belong to $p\Z \cup (1+p\Z)$. 

 For a contradiction, we assume now, without loss of generality, that $P$ has class $2$ and that $\gamma_2(P)$ has exponent $p$ (in other words, with the aid of \cref{lem:Q3quotients}, we are replacing $P$ with $P/\gamma_3(P)\mho_1(\gamma_2(P)$)). Then $\Phi(P)$ is central and so the commutator map induces a bilinear map $P/\Phi(P)\times P/\Phi(P)\rightarrow\gamma_2(P)$. We deduce that, for every $x\in\gamma_2(P)$, one has $\alpha(x)=x^{m^2}$.
 Let now $g\in P\setminus \Phi(P)$ and note that $H=\gen{g}\gamma_2(P)$ is abelian and $Q$-stable. Since $g\in P\setminus \Phi(P)$ and %$a\not\equiv a^2\bmod p$ {\color{blue}[
 $m\not\equiv m^2\bmod p$
 %?]}
 , we derive a contradiction from 
  \cref{prop:AK}\ref{it:AK4}. %{\color{blue}[perché abbiamo subito una contraddizione? se $H$ ha esponente $p$ sono d'accordo ma se non ce l'ha?]}{\color{red}[si va a quoziente modulo $\langle g^p\rangle,$
 % che è comunque $Q$-stabile ($\alpha(g)=g^mz$ con $z\in \gamma_2(P)$
 % e $\alpha(g^p)=((g^m)z)^p=g^{mp}$]} {\color{blue}[e se $H$ fosse ciclico tale che $\langle g^p\rangle = \gamma_2(G)$? forse bisogna allargare un pochino l'argomento dicendo che avendo classe $2$, $\dG(P)\geq 2$ e se $p$ è dispari allora sicuramente possiamo trovare $g\in P\setminus \Phi(P)$ tale che $g^p$ non genera $\gamma_2(G)$ (per esempio per regolarità). In realtà mi pare che $p=2$ non possa accadere perché vogliamo $m\neq 1$. E' giusto così?]}{\color{red}[ma se $H$ fosse ciclico, avresti che $(g^p)\alpha=(g^\alpha)^p=(g^p)^m,$ e quindi non può stare in $\gamma_2(G)$]} 
 We have thus proven that $P$ is abelian. 
\end{proof}

\begin{example}\label{ex:Q8}
    Let $P=Q_8$ and recall that $\Aut(P)\cong \Sym(4)$. Let $\alpha\in\Aut(P)$ correspond to $(1\ 2\ 3)$ via a chosen isomorphism and let $\overline{\alpha}$ be the image of $\alpha$ in $\Aut(P/\Phi(P))$. Define $Q=\gen{\alpha}$ and $G=P\rtimes Q$. The characteristic polynomial of $\overline{\alpha}$ is $x^2+x+1\in\F_2[x]$, which is irreducible. In particular, the only $Q$-stable subgroups of $P$ are ${1}$, $P$, and $\Phi(P)$. However $\alpha$ restricted to $\Phi(P)$ is the identity map and so $G$ does \underline{not} satisfy \eqref{eq:Q3}. 
\end{example}

\noindent
The following is \cite[Thm.~2]{Luc13}. Recall that a chief factor $X/Y$ of a finite group $G$ is \emph{non-Frattini} if $X/Y$ is not a subgroup of $\Phi(G/Y)$. 

\begin{proposition}\label{prop:chief}
    Let $G$ be a finite solvable group. Then $\mG(G)$ coincides with the number of non-Frattini factors in a chief series of $G$.
\end{proposition}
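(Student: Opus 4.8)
The plan is to prove the statement by induction on $|G|$, reducing at each step to the quotient by a minimal normal subgroup. Since $G$ is solvable, a minimal normal subgroup $N$ is elementary abelian, and we may fix a chief series passing through $N$. Writing $c(G)$ for the number of non-Frattini factors in such a series, the factors lying above $N$ are in bijection with those of a chief series of $G/N$, and a factor $X/Y$ with $N\leq Y$ is contained in $\Phi(G/Y)$ if and only if its image is contained in $\Phi((G/N)/(Y/N))$; hence $c(G)=c(G/N)$ when $N\leq\Phi(G)$ and $c(G)=c(G/N)+1$ otherwise. It therefore suffices to prove that $\mG(G)=\mG(G/N)$ in the first case and $\mG(G)=\mG(G/N)+1$ in the second; the independence of $c(G)$ from the chosen chief series then follows \emph{a posteriori}, since $\mG(G)$ makes no reference to a series.

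In the \textbf{Frattini case} $N\leq\Phi(G)$ I would exploit that elements of $N$ are non-generators. If $X$ is an irredundant generating set of $G$, then for each $x\in X$ the subgroup $\gen{X\setminus\{x\}}$ is proper, so $\gen{X\setminus\{x\}}N\neq G$; thus the images of $X$ in $G/N$ form an irredundant generating set of the same cardinality, giving $\mG(G)\leq\mG(G/N)$. Conversely, any irredundant generating set of $G/N$ lifts, again by the non-generator property, to an irredundant generating set of $G$, so $\mG(G)=\mG(G/N)$.

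For the \textbf{non-Frattini case}, the lower bound is the easier half. Since $N\not\leq\Phi(G)$ there is a maximal subgroup $M$ with $G=NM$, and minimality of $N$ forces $N\cap M=1$, so $M$ is a complement with $M\cong G/N$. Taking an irredundant generating set $Z$ of $M$ with $|Z|=\mG(G/N)$ and any $n_0\in N\setminus\{1\}$, the set $Z\cup\{n_0\}$ generates $G$ because $N$, being minimal normal, is generated by $n_0$ as an $M$-module; irredundancy is then routine, whence $\mG(G)\geq\mG(G/N)+1$.

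The heart of the argument, and the step I expect to be the main obstacle, is the matching \emph{upper} bound. Let $X$ be an irredundant generating set with $|X|=\mG(G)$, and choose $Y_0\subseteq X$ mapping to an irredundant generating set of $G/N$ of maximal size $m\leq\mG(G/N)$; set $H_0=\gen{Y_0}$, so $H_0N=G$. If $H_0=G$, then irredundancy forces $Y_0=X$ and $\mG(G)=m\leq\mG(G/N)$. Otherwise minimality of $N$ gives $H_0\cap N=1$, so $H_0$ is a complement and $N$ is an irreducible $H_0$-module. Writing each remaining $x\in X\setminus Y_0$ as $x=\nu_x h_x$ with $h_x\in H_0$ and $\nu_x\in N\setminus\{1\}$ (nontrivial, as $x\notin H_0$), a Dedekind-type computation shows that irredundancy of $X$ is \emph{equivalent} to the $\nu_x$ forming an irredundant generating set of $N$ as an $H_0$-module. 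Since $N$ is irreducible it is generated by any single nonzero element, so at most one $\nu_x$ can occur; hence there is exactly one remaining element and $\mG(G)=m+1\leq\mG(G/N)+1$. The crux is thus that, modulo $N$, an irredundant generating set can shed at most one element, and the mechanism enforcing this is precisely the irreducibility of the minimal normal subgroup $N$. Combining the two bounds gives $\mG(G)=\mG(G/N)+1$, completing the induction.
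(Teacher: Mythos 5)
Your proof is correct, but note that the paper offers no internal proof to compare against: the proposition is imported verbatim as \cite[Thm.~2]{Luc13}, so the comparison is with the literature rather than with the text. Your induction on a minimal normal subgroup $N$ (elementary abelian by solvability), split into the cases $N\leq\Phi(G)$ and $N\not\leq\Phi(G)$, is the standard elementary route to this result, and all the key steps check out: the non-generator argument in the Frattini case; the existence of a complement $M$ with $N\cap M=1$ and the fact that any nontrivial $n_0\in N$ generates $N$ as an $M$-module (its normal closure in $G$ equals its $M$-closure since $N$ is abelian) for the lower bound; and, for the upper bound, the observation that $\gen{H_0,\nu}=H_0\gen{\nu^{H_0}}=G$ for any $\nu\in N\setminus\{1\}$ by irreducibility of $N$ as an $H_0$-module, which forces $|X\setminus Y_0|\leq 1$. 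Three small points worth tightening. First, the claimed \emph{equivalence} between irredundancy of $X$ and irredundancy of $\{\nu_x\}$ as module generators is an overstatement (irredundancy of $X$ also constrains the elements of $Y_0$), but the only implication you use --- redundancy of the $\nu_x$ forces redundancy of some $x$ --- is the correct one and is easy to verify. Second, the subcase $H_0=G$ should be flagged as impossible, since $\mG(G)\leq\mG(G/N)$ there contradicts the lower bound $\mG(G)\geq\mG(G/N)+1$ you have already established; as written it is left dangling. Third, for the a posteriori independence of the count from the chosen chief series, say explicitly that the induction applies to an \emph{arbitrary} chief series by taking $N$ to be its first nontrivial term, which is automatically a minimal normal subgroup; with that phrasing the maximality requirement in the choice of $Y_0$ is also seen to be superfluous.
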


\begin{lemma}\label{lem:p-odd}
     Let $G$ be a finite non-nilpotent group such that $\Sigma(G)=\tilde\Sigma(G)$ and write $G=PQ$ as in \cref{prop:AK}. Assume that $P$ has class $2$ and that $\ZG(P)$ has exponent $p$. Then $|P|=p^3$ and $p$ is odd. 
\end{lemma}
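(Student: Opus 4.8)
The plan is to force a rigid structure on $G$ and then pin down $|P|$ by pitting the irreducible action of $Q$ against the internal constraints that $\ZG(P)$ imposes on $P$. First I would extract the structural consequences of the hypotheses. By \cref{prop:Q3toMonBasis}, $G$ is monotone with the basis property, and since $P$ has class $2$ it is in particular nonabelian, so \cref{prop:Q3->d2}\ref{it:d2.1} applies: $\dG(P)=\dG(G)=2$ and $V:=P/\Phi(P)$ is a simple (hence $2$-dimensional) $\F_p[Q]$-module, on which $Q$ acts faithfully by \cref{prop:AK}. As $P$ is $2$-generated of class $2$, the group $\gamma_2(P)=\gen{[a,b]}$ is cyclic; lying in $\ZG(P)$, which has exponent $p$, and being nontrivial, it satisfies $\gamma_2(P)\cong C_p$. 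Monotonicity moreover gives $\dG(H)\leq\dG(P)=2$ for every $H\leq P$, so $\mathrm{rk}(P)\leq 2$.

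Next I would describe $\Phi(P)$. In a class-$2$ group with $\gamma_2(P)$ of exponent $p$ one has $[x^p,y]=[x,y]^p=1$, so $\mho_1(P)\leq\ZG(P)$ and hence $\Phi(P)=\mho_1(P)\gamma_2(P)\leq\ZG(P)$ is abelian of exponent $p$, i.e.\ elementary abelian. Since $\mathrm{rk}(P)\leq 2$, this forces $|\Phi(P)|\leq p^2$, and together with $|P/\Phi(P)|=p^2$ we get $|P|\leq p^4$, leaving only $|P|\in\{p^3,p^4\}$.

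To rule out $|P|=p^4$ I would pass to $\overline G=G/\gamma_2(P)$, which still satisfies $\Sigma=\tilde\Sigma$ by \cref{lem:Q3quotients} (and is again non-nilpotent with the basis property) and whose normal $p$-subgroup $\overline P=P/\gamma_2(P)$ is abelian with $\overline P/\Phi(\overline P)\cong V$ still simple. Writing $\overline P\cong C_{p^a}\times C_{p^b}$ with $a\geq b\geq 1$, if $a>b$ then $\mho_{a-1}(\overline P)\cong C_p$ is a $1$-dimensional $\F_p[Q]$-section, so \cref{prop:AK}\ref{it:AK4} (applied to $\overline G$) makes $Q$ act faithfully on it; thus $|Q|\mid p-1$, the image of a generator of $Q$ in $\GL(2,p)$ is diagonalizable over $\F_p$, and $V$ is reducible, a contradiction. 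Hence $\overline P$ is homocyclic, $\Phi(P)/\gamma_2(P)=\mho_1(\overline P)\cong C_{p^{a-1}}\times C_{p^{a-1}}$, and the elementary abelian $\Phi(P)$ has rank $2a-1\leq 2$; therefore $a=1$, $\Phi(P)=\gamma_2(P)$, and $|P|=p^3$. Finally $p$ is odd: for $p=2$ the above gives $|P|=8$, the only class-$2$ group of order $8$ admitting an order-$3$ automorphism acting irreducibly on $P/\Phi(P)\cong\F_2^2$ is $Q_8$, and $G=Q_8\rtimes C_3$ is excluded by \cref{ex:Q8}.

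The main obstacle is precisely the elimination of $|P|=p^4$: one must convert the irreducibility of $V$ into a statement about the abelian quotient $\overline P$, via the observation that a non-homocyclic $\overline P$ would carry a $Q$-stable $1$-dimensional section forbidden by the simplicity of $V$, and then combine this homocyclicity with the rank-$2$ and exponent-$p$ constraints on $\Phi(P)\leq\ZG(P)$ to collapse the homocyclic exponent to $1$. Note that I deliberately avoid invoking the faithfulness of $Q$ on the (always present) section $\gamma_2(P)\cong C_p$ for odd $p$, since combined with the simplicity of $V$ this would already be incompatible; the $1$-dimensional section used above exists only in the non-homocyclic case, so that argument is genuinely conditional and does not overreach.
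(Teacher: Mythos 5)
Your proof is correct, but the core step---pinning down $|P|=p^3$---goes by a genuinely different route than the paper's. You bound $\Phi(P)$: after observing that $\Phi(P)=\mho_1(P)\gamma_2(P)\leq \ZG(P)$ is elementary abelian of rank at most $\mathrm{rk}(P)\leq 2$, you pass to $\overline{P}=P/\gamma_2(P)$, rule out the non-homocyclic case by exhibiting a one-dimensional $\F_p[Q]$-section (which via \cref{prop:AK}\ref{it:AK4} and \cref{lem:CH}\ref{it:CH1} would make $P/\Phi(P)$ reducible), and then collapse the homocyclic exponent using the rank bound. The paper instead bounds $\ZG(P)$ directly: it decomposes $\ZG(P)=\gamma_2(P)\oplus M$ as $\F_p[Q]$-modules (Maschke, since $\gcd(|Q|,p)=1$) and kills $M$ by counting non-Frattini chief factors of the subgroup $\ZG(P)Q$ via \cref{prop:chief}, obtaining $\mG(\ZG(P)Q)\geq 3>2=\dG(G)$ whenever $M\neq 1$; this yields $\ZG(P)=\gamma_2(P)=\Phi(P)$ of order $p$ in one stroke. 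Your route trades the chief-series count for the irreducibility-versus-eigenvalue tension that the paper only deploys later (in \cref{prop:metacyclic}); both are sound, and the paper's version gives the slightly sharper intermediate fact $\ZG(P)=\Phi(P)$. The setup and the $p=2$ endgame ($D_8$ excluded because $\Aut(D_8)\cong D_8$ admits no order-$3$ automorphism, $Q_8$ excluded by \cref{ex:Q8}) coincide with the paper's. Your closing remark is also apt: applying \cref{prop:AK}\ref{it:AK4} to the section $\gamma_2(P)\cong C_p$ itself would force $|Q|\mid p-1$ and contradict the simplicity of $P/\Phi(P)$ outright, showing the hypotheses are ultimately vacuous---which is exactly the contradiction the paper saves for \cref{prop:metacyclic}, so your decision not to shortcut through it keeps the lemma doing the same work it does in the paper.
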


\begin{proof}
Since $P$ is not abelian, \cref{prop:Q3->d2}\ref{it:d2.1} yields that $\dG(P)=\dG(G)=2$. Moreover, the exponent of $\gamma_2(P)$ being $p$, the subgroup $\Phi(P)$ is central and so $\gamma_2(P)$ has order $p$. Since both $\ZG(P)$ and $\gamma_2(P)$ are $\F_p[Q]$-modules, there exists a submodule $M$ of $\ZG(P)$ such that $\ZG(P)=\gamma_2(G)\oplus M$. 
We claim that $M=\{1\}$. For a contradiction, assume that $M$ is not trivial. Then \cref{prop:Q3toMonBasis}, together with
\cref{prop:chief}, yields that $2=\dG(G)\geq \dG(\ZG(P)Q)=\mG(\ZG(P)Q)\geq 3$; a contradiction. 
We have proven that $M=\{1\}$ and so $\ZG(P)=\gamma_2(P)$. Since $\Phi(P)$ is central and contains $\gamma_2(P)$, we get that $\gamma_2(P)=\Phi(P)=\ZG(P)$ and so $|P|=p^3$. 

In conclusion, there are only two non-abelian groups of order $8$, up to isomorphism, being $D_8$ and $Q_8$. The group $D_8$ cannot figure as a normal $2$-Sylow of  a group satisfying \eqref{eq:Q3} because $\Aut(D_8)\cong D_8$, while $Q_8$ is excluded by \cref{ex:Q8}.
\end{proof}

\noindent
The following is a consequence of \cite[Theorem 4]{Mann05}.

\begin{lemma}\label{lem:mann}
   Let $p$ be a prime number and let $P$ be a finite monotone $p$-group such that $\dG(P)\leq 2$. Then $P$ is metacyclic or $\ZG(P)$
has exponent $p$.
\end{lemma}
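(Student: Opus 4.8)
The plan is to dispose of the degenerate cases and then feed the hypothesis into Mann's classification. If $\dG(P)\le 1$ then $P$ is cyclic (or trivial) and hence metacyclic, so the first alternative holds and there is nothing to prove. I would therefore assume $\dG(P)=2$ throughout, and aim to show that if $P$ is not metacyclic then $\exp(\ZG(P))=p$.

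The single structural consequence of monotonicity that I would extract is immediate: applying the defining implication $H\le K\le P\Rightarrow\dG(H)\le\dG(K)$ with $K=P$ shows that every subgroup of $P$ is generated by at most $\dG(P)=2$ elements, so $\mathrm{rk}(P)\le 2$. This places $P$ squarely within the scope of \cite[Theorem~4]{Mann05}, which describes the monotone $p$-groups, and I would invoke that theorem to enumerate the non-metacyclic possibilities. Concretely, I expect the mechanism behind the conclusion to be Blackburn's classification of $p$-groups of rank $2$: for $p\ge 5$ a rank-$2$ $p$-group is forced to be metacyclic, so the non-metacyclic alternative cannot occur at all, whereas for $p=3$ the non-metacyclic rank-$2$ groups are precisely the $3$-groups of maximal class. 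In that remaining case $|\ZG(P)|=p$, so the center is cyclic of order $p$ and in particular has exponent $p$, which is exactly the second alternative.

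The last step is then purely verificational: one reads off from Mann's list, type by type, that each non-metacyclic monotone $p$-group with $\dG(P)=2$ has center of exponent $p$, which, as just indicated, reduces to the observation that such a group has maximal class and hence center of order $p$. I would phrase this so that the dichotomy (metacyclic) versus (central exponent $p$) is manifestly exhaustive.

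The main obstacle is the prime $p=2$, which lies outside the range of \cite{Mann05}. There I would instead appeal to the Crestani--Menegazzo classification of monotone $2$-groups \cite{CM12}, checking across the finitely many non-metacyclic types with $\dG=2$ appearing there that the center again contains no element of order $4$. Making this verification clean, and reconciling the $2$-group analysis with the statement valid for all primes, is the delicate point I expect to cost the most effort; the odd-prime case, by contrast, I anticipate being a short deduction from \cite[Theorem~4]{Mann05} together with the maximal-class remark.
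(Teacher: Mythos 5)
Your general strategy---reduce to $\dG(P)=2$, note that monotonicity applied with $K=P$ gives $\mathrm{rk}(P)\le 2$, and then read the conclusion off the classification of monotone $p$-groups, via \cite[Thm.~4]{Mann05} for $p$ odd and \cite{CM12} for $p=2$---is the same route the paper takes (its entire proof is the one-line citation of \cite[Thm.~4]{Mann05}, so your observation that $p=2$ strictly speaking needs a separate reference is a fair one). The problem lies in the concrete case analysis you propose to carry out.

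The claim that for $p\ge 5$ a rank-$2$ $p$-group is forced to be metacyclic is false, and this is the step your ``purely verificational'' conclusion rests on. For every odd prime $p$, the nonabelian group of order $p^3$ and exponent $p$ is $2$-generated, has rank $2$ (all proper subgroups have order at most $p^2$), is monotone, and is not metacyclic (a metacyclic group of exponent $p$ has order at most $p^2$). So the non-metacyclic alternative occurs for all odd primes, not only for $p=3$, and Blackburn's classification of rank-$2$ $p$-groups does not reduce to ``metacyclic or a $3$-group of maximal class''. Your argument therefore disposes of the case $p\ge 5$ vacuously, and for $p=3$ it relies on the unverified expectation that every non-metacyclic case is of maximal class with $|\ZG(P)|=p$; neither step establishes the dichotomy (metacyclic) versus ($\exp\ZG(P)=p$). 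What is actually needed is the precise statement of Mann's Theorem~4, which constrains the structure of a non-metacyclic monotone $2$-generated $p$-group (for $p$ odd) tightly enough---in particular the structure of $\Omega_1(P)$ and of the quotient by it---to force the center to have exponent $p$; the detour through a prime-by-prime reading of the rank-$2$ classification does not work as you describe it, and the analogous verification for $p=2$ against the Crestani--Menegazzo list would likewise have to be done against the actual non-metacyclic entries rather than against the claim that there are none.
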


% \begin{lemma}\label{lem:q odd}
%     Let $G$ be a non-nilpotent Q3-group and write $G=PQ$ as in \cref{prop:AK}. Assume that $P$ has class $2$ and that $\ZG(P)$ has exponent $p$. Then $q\geq 3$. % and $p-1$ is not a power of $2$. 
% \end{lemma}

% \begin{proof}
%   Thanks to \cref{lem:p-odd}, we know that $|P|=p^3$ and $p$ is odd. Moreover, $Q$ acts by scalar multiplication on $\gamma_2(P)$ by a non-trivial homomorphism $\chi: Q\rightarrow\F_p^\times$. Assume without loss of generality that $\Cyc_Q(P)=\{1\}$. Moreover, assume for a contradiction that $Q$ is a $2$-group. Then $\Omega_1(Q)$ acts by scalar multiplication by $-1$ on $P/\Phi(P)$ and so fixes every element of $\gamma_2(P)$. Contradiction to $G$ being a Q3-group. 
% %
% %  We have proven that the order of $G$ is odd and therefore, $\chi$ being non-trivial, $p-1$ is not a power of $2$.
% \end{proof}

\begin{proposition}\label{prop:metacyclic}
 Let $G$ be a finite non-nilpotent group such that $\Sigma(G)=\tilde\Sigma(G)$ and write $G=PQ$ as in \cref{prop:AK}. Assume that $P$ has class $2$. Then $P$ is metacyclic.
\end{proposition}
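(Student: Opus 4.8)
The plan is to combine the structural information already extracted for $P$ with the dichotomy of \cref{lem:mann}, and then to eliminate the non-metacyclic alternative using the module-theoretic constraints coming from the basis property. First I would observe that, since $P$ has class $2$, it is in particular nonabelian, so that \cref{prop:Q3->d2} puts us in the situation of \cref{prop:Q3->d2}\ref{it:d2.1}: we have $\dG(P)=\dG(G)=2$ and $V:=P/\Phi(P)$ is a simple $\F_p[Q]$-module of dimension $2$. Next, since $P\leq G$, \cref{lem:Q3quotients} yields $\Sigma(P)=\tilde\Sigma(P)$, so that $P$ is monotone by \cref{prop:Q3toMonBasis}. As $\dG(P)=2$, \cref{lem:mann} applies and gives that either $P$ is metacyclic --- in which case we are done --- or $\ZG(P)$ has exponent $p$. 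It therefore remains to rule out this second alternative.

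So suppose, for a contradiction, that $\ZG(P)$ has exponent $p$. Then \cref{lem:p-odd} tells us that $p$ is odd and $|P|=p^3$, so $P$ is extraspecial with $W:=\gamma_2(P)=\ZG(P)=\Phi(P)$ of order $p$. Let $\bar\alpha$ denote the image of the generator $\alpha$ in $\overline{Q}=\langle\bar\alpha\rangle\leq\Aut(V)=\GL(2,p)$. The simplicity of $V$ forces $\bar\alpha$ to fix no line of $V$, i.e.\ to have no eigenvalue in $\F_p$; by the contrapositive of \cref{lem:CH}\ref{it:CH1}, the order of $\bar\alpha$ does not divide $p-1$. Since the action of $Q$ on $V=P/\Phi(P)$ is faithful (apply \cref{prop:AK}\ref{it:AK4} with $\tilde Q=Q$ and $S=V$, or see \cref{rmk:chars}), we conclude that $|Q|=|\bar\alpha|$ does not divide $p-1$.

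The crux is then to determine how $Q$ acts on the central section $W$. The commutator map induces an alternating $\F_p[Q]$-bilinear surjection $V\times V\to W$, hence a $Q$-equivariant isomorphism $\Lambda^2 V\cong W$; as $Q$ acts on the one-dimensional space $\Lambda^2 V$ through $\det\bar\alpha$, it acts on $W$ through the scalar $\det\bar\alpha\in\F_p^\times$. Since $\F_p^\times$ is cyclic of order $p-1$, the order of $\det\bar\alpha$ divides $p-1$, so the action of $Q$ on $W$ cannot be faithful, because $|Q|\nmid p-1$. But $W=\gamma_2(P)$ is a genuine $\F_p[Q]$-section of $P$ (take $H=W$ and $N=\{1\}$), so \cref{prop:AK}\ref{it:AK4}, applied with $\tilde Q=Q$ and $S=W$, demands that $Q$ act faithfully on $W$; this is a contradiction. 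Hence $\ZG(P)$ does not have exponent $p$, and \cref{lem:mann} leaves only the possibility that $P$ is metacyclic.

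I expect the main obstacle to be the step in the last paragraph that identifies the action of $Q$ on the central chief factor $W$ with the determinant $\det\bar\alpha$ and recognizes that this clashes with the irreducibility of the action on $V$: irreducibility pushes $|Q|$ outside the divisors of $p-1$, whereas any one-dimensional (scalar) action automatically has order dividing $p-1$. Once this tension is made precise, the remaining arguments are just bookkeeping with the reductions already in place.
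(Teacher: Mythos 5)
Your proof is correct and follows essentially the same route as the paper's: reduce via \cref{prop:Q3->d2}, \cref{lem:mann} and \cref{lem:p-odd} to the extraspecial case $|P|=p^3$, then derive a contradiction between the faithfulness of the $Q$-action on $\gamma_2(P)$ and the simplicity of $P/\Phi(P)$ using \cref{lem:CH}\ref{it:CH1}. The only (cosmetic) difference is that you run the contradiction in the opposite direction, identifying the action on $\gamma_2(P)$ with $\det\bar\alpha$ via $\Lambda^2V$, whereas the paper simply notes that faithfulness on a group of order $p$ forces $|Q|$ to divide $|\Aut(\gamma_2(P))|=p-1$ and then invokes \cref{lem:CH}\ref{it:CH1} to contradict simplicity.
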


\begin{proof}
Since $P$ is not abelian, Proposition \ref{prop:Q3->d2}\ref{it:d2.1} ensures $\dG(P)=2$ and that $P/\Phi(P)$ is a simple $\F_p[Q]$-module. 
Moreover, $P$ is monotone, so \cref{lem:mann} yields that $P$ is metacyclic or $\ZG(G)$
has exponent $p$. Assume, for a contradiction, that $P$ is not metacyclic. Then, by \cref{lem:p-odd}, the prime $p$ is odd and $P$ has order $p^3$ and so, $P$ not being metacyclic, the exponent of $P$ is $p$. 
Thanks to \cref{prop:AK}\ref{it:AK4}, the subgroup $\gamma_2(P)$ is not fixed by $Q$ and $|Q|$ divides $p-1=|\Aut(\gamma_2(P))|$.
Now \cref{lem:CH}\ref{it:CH1} ensures that the elements of $Q$ fix a $1$-dimensional subspace of  $P/\Phi(P)$ and so $P/\Phi(P)$ is not simple. Contradiction.
\end{proof}

\noindent
The following result is a consequence of \cite[Sec.~1]{Men93}.

\begin{proposition}\label{prop:aut-meta}
    Let $p$ be an odd prime number and let $P$ be a finite metacyclic $p$-group. 
    Let $\alpha\in\Aut(P)$ have order coprime to $p$. Then one of the following holds:
    \begin{itemize}
        \item $P$ is homocyclic with $\dG(P)=2$; or
        \item the order of $\alpha$ divides $p-1$.
    \end{itemize}
\end{proposition}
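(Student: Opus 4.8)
The plan is to transfer everything to the action of $\alpha$ on the Frattini quotient $P/\Phi(P)$ and to show that, unless $P$ is homocyclic of rank $2$, this action fixes a line. Since $P$ is metacyclic we have $\dG(P)\leq 2$. If $\dG(P)=1$ then $P$ is cyclic, $\Aut(P)\cong C_{p^{n-1}}\times C_{p-1}$ (here $p$ is odd), so every $p'$-element has order dividing $p-1$ and the second alternative holds. So I would assume $\dG(P)=2$, whence $P/\Phi(P)\cong\F_p^2$. The kernel of the restriction map $\Aut(P)\to\Aut(P/\Phi(P))=\GL(2,p)$ is the stability group of the chain $P>\Phi(P)>1$, hence a $p$-group; as $|\alpha|$ is coprime to $p$, the image $A:=\overline\alpha\in\GL(2,p)$ satisfies $|A|=|\alpha|$. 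Now if $A$ fixes a line of $P/\Phi(P)$, i.e.\ has an eigenvalue in $\F_p$, then its characteristic polynomial is reducible, so by the contrapositive of \cref{lem:CH}\ref{it:CH2} the order $|A|$ divides $p(p-1)$, and being coprime to $p$ it divides $p-1$. Thus it suffices, whenever $P$ is not homocyclic of rank $2$, to exhibit a characteristic subgroup of $P$ whose image in $P/\Phi(P)$ is one-dimensional (a characteristic subgroup is $\alpha$-invariant, so $A$ then fixes the corresponding line).

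For the generic case I would use the power structure. Recall that for $p$ odd a metacyclic $p$-group is regular, so it has a well-defined type $(e_1,e_2)$ with $e_1\geq e_2\geq 1$ governing $\Omega_n$ and $\mho_n$ exactly as in $C_{p^{e_1}}\times C_{p^{e_2}}$. If $e_1>e_2$, I would take $C=\Omega_{e_2}(P)$: with respect to a uniform generating pair $x_1,x_2$ of orders $p^{e_1},p^{e_2}$, this subgroup contains $x_2$ and $x_1^{p^{e_1-e_2}}$, and since $e_1-e_2\geq 1$ its image in $P/\Phi(P)$ is exactly the line $\gen{\bar x_2}$ (no element of order at most $p^{e_2}$ has nonzero $\bar x_1$-component, as $x_1$ has order $p^{e_1}>p^{e_2}$). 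As $\Omega_{e_2}(P)$ is characteristic, $A$ fixes this line and $|\alpha|$ divides $p-1$. This settles every non-homocyclic abelian $P$ and every nonabelian $P$ with $e_1>e_2$.

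The remaining, and hardest, case is $e_1=e_2=:e$. If $P$ is abelian it is $C_{p^e}\times C_{p^e}$, i.e.\ homocyclic of rank $2$, and there is nothing to prove; this is the genuine exception, since $\Aut(C_{p^e}\times C_{p^e})=\GL(2,\Z/p^e\Z)$ contains $p'$-elements of order dividing $p^2-1$ but not $p-1$ (e.g.\ of order $p+1$). So I would assume $P$ nonabelian with $e_1=e_2=e$. Here $\gamma_2(P)\neq 1$ is characteristic and cyclic, a $2$-generated metacyclic $p$-group has $\Omega_1(P)\cong\F_p^2$, and balancedness gives $\mho_{e-1}(P)=\Omega_1(P)$. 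The key device is the $p^{e-1}$-power map, which by regularity induces an $\F_p$-linear, $\Aut(P)$-equivariant isomorphism $\psi\colon P/\Phi(P)\xrightarrow{\ \sim\ }\Omega_1(P)$ (both groups have order $p^2$). The characteristic line $Z:=\Omega_1(\gamma_2(P))\leq\Omega_1(P)$ is $\alpha$-invariant, so $\psi^{-1}(Z)$ is an $\alpha$-invariant line in $P/\Phi(P)$, and again $|\alpha|$ divides $p-1$.

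The main obstacle is exactly this balanced nonabelian case: every \emph{visible} characteristic subgroup ($\gamma_2(P)$, $\ZG(P)$, the $\mho_i(P)$ and the $\Omega_i(P)$ with $0<i<e$) is contained in $\Phi(P)$ and therefore maps to $0$, so the invariant line becomes apparent only after transporting the characteristic socle line $\Omega_1(\gamma_2(P))$ up to the Frattini quotient through the regular power map; this transport is also what isolates \emph{homocyclic} as the unique obstruction. Rather than running this by hand, one may instead read the statement off Menegazzo's explicit description of $\Aut(P)$ for metacyclic $p$-groups in \cite[Sec.~1]{Men93}, from which the image of $\Aut(P)$ in $\GL(2,p)$ is seen to stabilize a line unless $P$ is homocyclic of rank $2$.
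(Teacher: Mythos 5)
Your proposal is correct in substance, but it takes a genuinely different route from the paper: the paper gives no argument at all for this proposition, simply declaring it ``a consequence of \cite[Sec.~1]{Men93}'' --- which is exactly the fallback you offer in your last sentence. What you add is a self-contained derivation from regular $p$-group theory: pass to $\GL(2,p)$ through the $p$-group kernel of $\Aut(P)\to\Aut(P/\Phi(P))$, exhibit a characteristic line in $P/\Phi(P)$ unless $P\cong C_{p^e}\times C_{p^e}$, and conclude via (the contrapositive of) \cref{lem:CH}\ref{it:CH2}. This buys transparency --- it isolates \emph{why} homocyclic of rank two is the unique exception, namely the absence of a characteristic line --- at the price of invoking P.~Hall's $\Omega/\mho$ calculus, whereas the citation buys brevity. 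Two steps in your write-up should be tightened. First, in the case $e_1>e_2$ the appeal to a ``uniform generating pair'' is the one genuinely loose point; it can be bypassed entirely by using $\Omega_{e_1-1}(P)$ instead of $\Omega_{e_2}(P)$: by regularity $|\Omega_{e_1-1}(P)|=p^{(e_1-1)+e_2}$, so this is a characteristic subgroup of index $p$ containing $\Phi(P)=\mho_1(P)$, and its image in $P/\Phi(P)$ is therefore a characteristic line with no computation of which line it is. Second, in the balanced nonabelian case the well-definedness and multiplicativity of $\psi$ rest on $\mho_{e-1}(\gamma_2(P))=1$; this does hold, because $\gamma_2(P)\le\Phi(P)=\mho_1(P)=\Omega_{e-1}(P)$ forces $\gamma_2(P)$ to have exponent at most $p^{e-1}$, but it is precisely where the metacyclic (cyclic derived subgroup) hypothesis enters and should be stated rather than absorbed into the word ``regularity''. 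With these two points patched, your argument is a complete and independent proof of the proposition.
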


\noindent
We conclude the section by finally giving the proof of \cref{thm:non-nilp}.

\begin{proof}[Proof of \cref{thm:non-nilp}]
We prove the equivalence of the two statements.

\ref{it:main2} $\implies$ \ref{it:main1}. This is the combination of \cref{prop:powers} and \cref{prop:coprime}.

\ref{it:main1} $\implies$ \ref{it:main2}.
 We borrow the notation from \cref{prop:AK}, which also ensures that the group $G$ is Frobenius and the action of $Q$ on $P/\Phi(P)$ is faithful. 
 
We now show that $P$ is abelian. Assume for a contradiction that this is not the case. Then, thanks to \cref{lem:Q3quotients}, we assume without loss of generality that $P$ has class $2$. \cref{prop:metacyclic} yields that $P$ is metacyclic.  

Assume first that $p=2$ and let  $K$ be the unique subgroup of order $2$ of $\gamma_2(P)$. Then $K$ is characteristic in $P$ and, in particular, it is $Q$-stable. The order of $K$ being $2$, it follows that $Q$ acts trivially on $K$. Contradiction to \cref{prop:AK}\ref{it:AK4}. 

    Assume now that $p$ is odd. Since $P$ is not abelian, 
 \cref{prop:aut-meta}
yields that $|Q|$ divides $p-1$ and \cref{prop:Q3->d2}\ref{it:d2.1} that $P/\Phi(P)$ is a simple $\F_p[Q]$-module. Contradiction to \cref{lem:CH}\ref{it:CH1}.

    We have thus proven that $P$ is abelian and proceed with showing that one of the conditions in \cref{thm:non-nilp}\ref{it:main2c} holds. To this end, let $\alpha$ be a generator of $Q$ and assume that $\alpha$ is not a power automorphism on $P/\Phi(P)$. It follows from \cref{prop:Q3->d2} that $\dG(P)=2$ and that $P/\Phi(P)$ is a simple $\F_p[Q]$-module. In particular, \cref{lem:CH}\ref{it:CH1} implies that the order of $\alpha$ does not divide $p-1$. 
    Moreover, \cref{prop:AK}\ref{it:AK4} ensures that for every non-negative integer $n$, the quotient $\mho_n(P)/\mho_{n+1}(P)$ is trivial or a simple $\F_p[Q]$-module, where the action of $Q$ is faithful. Assuming that $\mho_n(P)\neq\mho_{n+1}(P)$, from the fact that $|Q|$ does not divide $p-1$ we derive that $|\mho_n(P):\mho_{n+1}(P)|=p^2$. We conclude that $P$ is homocyclic.  
\end{proof}

\end{document}